\author{Elisheva Adina Gamse}
\thanks{The first author was partially supported by NSERC PostDoctoral Fellowship 488168.}
\author{Jonathan Weitsman}
\thanks{Both authors were partially supported by NSF grant DMS 12-11819}
\title{Relations in the cohomology ring of the moduli space of flat $SO(2n+1)$-connections on a Riemann surface}
\date{1 June 2018}
\newtheorem{thm}{Theorem}[section]
\newtheorem{prop}[thm]{Proposition}
\newtheorem{lem}[thm]{Lemma}
\newtheorem{cor}[thm]{Corollary}
\newtheorem{rmk}[thm]{Remark}
\theoremstyle{definition}
\newtheorem{defn}[thm]{Definition}
\newtheorem*{notn}{Notation}
\theoremstyle{remark}
\newtheorem*{ill}{Illustration}
\DeclareMathOperator{\Hom}{Hom}
\DeclareMathOperator{\Stab}{Stab}
\renewcommand{\ggg}{\mathfrak{g}}
\newcommand{\qqq}{\mathbb{Q}}
\newcommand{\ccc}{\mathbb{C}}
\newcommand{\zzz}{\mathbb{Z}}
\newcommand{\rrr}{\mathbb{R}}
\newcommand{\nnn}{\mathbb{N}}
\newcommand{\cphi}{\mathbb{C}_{(\phi)}}
\newcommand{\spmijx}{s^{\pm}_{ij}(x)}
\newcommand{\spijx}{s^{+}_{ij}(x)}
\newcommand{\smijx}{s^{-}_{ij}(x)}
\newcommand{\lpmij}{L^\pm_{ij}}
\newcommand{\lpij}{L^+_{ij}}
\newcommand{\lmij}{L^-_{ij}}
\newcommand{\ypmij}{y^\pm_{ij}}
\newcommand{\ypij}{y^+_{ij}}
\newcommand{\ymij}{y^-_{ij}}
\newcommand{\qyx}{\mathbb{Q}[Y(X)]}
\newcommand{\qymx}{\mathbb{Q}[Y^-(X)]}
\newcommand{\qyzx}{\mathbb{Q}[Y_z(X)]}
\newcommand{\qyxmz}{\mathbb{Q}[Y(X \setminus \{z\})}
\newcommand{\setgens}{\{a_1, \ldots, a_g, b_1, \ldots, b_g\}}
\newcommand{\genss}{a_1, \ldots, a_g, b_1, \ldots, b_g}
\newcommand{\sehfwz}{\{e_1, \ldots, e_h, f_1, \ldots, f_w, z\}}
\newcommand{\qyehz}{\mathbb{Q}[Y(\{e_1, \ldots, e_h, z\})]}
\newcommand{\qyeh}{\mathbb{Q}[Y(\{e_1, \ldots, e_h\})]}
\newcommand{\qyfwz}{\mathbb{Q}[Y(\{f_1, \ldots, f_w, z\})]}
\newcommand{\qyfw}{\mathbb{Q}[Y(\{f_1, \ldots, f_w\})]}
\newcommand{\qymbx}{\mathbb{Q}[Y^-_B(X)]}
\newcommand{\bbb}{\mathcal{B}}
\newcommand{\seh}{\{e_1, \ldots, e_h\}}
\newcommand{\sehz}{\{e_1, \ldots, e_h,z\}}
\newcommand{\sfw}{\{f_1, \ldots, f_w\}}
\newcommand{\sfwz}{\{f_1, \ldots, f_w,z\}}
\newcommand{\eeh}{e_1, \ldots, e_h}
\newcommand{\ffw}{f_1, \ldots, f_w}
\newcommand{\bx}{\mathcal{B}[X]}
\newcommand{\bxmz}{\mathcal{B}[X\setminus \{z\}]}
\newcommand{\xmz}{X \setminus \{z\}}
\newcommand{\ymbx}{Y^-_B(X)}
\newcommand{\behz}{\mathcal{B}[\{e_1, \ldots, e_h, z\}]}
\newcommand{\bfwz}{\mathcal{B}[\{f_1, \ldots, f_w, z\}]}
\newcommand{\squareminus}{\begin{tikzpicture} \node[draw, rectangle, blue] at (0,0){$-$}; \end{tikzpicture}}
\newcommand{\squareplus}{\begin{tikzpicture} \node[draw, rectangle, red] at (0,0){$+$}; \end{tikzpicture}}
\newcommand{\yecij}{y^{\epsilon_C(i,j)}_{ij}}
\newcommand{\ymecij}{y^{-\epsilon_C(i,j)}_{ij}}
\begin{document}

\begin{abstract} We consider the moduli space of flat $SO(2n+1)$-connections (up to gauge transformations) on a Riemann surface, with fixed holonomy around a marked point. There are natural line bundles over this moduli space; we construct geometric representatives for the Chern classes of these line bundles, and prove that the ring generated by these Chern classes vanishes below the dimension of the moduli space, generalising a conjecture of Newstead. \end{abstract}
\maketitle
\section{Introduction}

Let $G$ be a compact Lie group with Lie algebra $\ggg$, and pick a maximal torus $T\subset G$. Spaces of the form $(\Hom \pi_1(\Sigma),G)/G$, where $\Sigma$ is a Riemann surface of genus $ g \geq 2$, arise in various branches of geometry. Such spaces have interpretations as the moduli space of flat connections up to gauge transformations (see \cite{ljjwtoric,ljjw, yoshida}); they also occur as a building block in the topological quantum field theoretical construction of invariants of 3-manifolds with boundary $\Sigma$. See also the related discussions of character varieties (\cite{sikora,simpson}), of parabolic Higgs bundles (\cite{hyperpolygon}) and of polygon spaces (\cite{polygon}).

A related space is obtained by marking a point $p \in \Sigma$ and prescribing the holonomy of the connections around that point. That is, fix a generator $c \in \pi_1(\Sigma \setminus p)$ that represents a small curve around $p$, and for $t \in T$, let $S_{g}(t):= \{\rho \in \Hom(\pi_1(\Sigma \setminus p), G \vert \rho(c) \sim t \} / G$, where $\sim$ denotes conjugacy in $G$. In the case where the holonomy $\zeta$ lies in the centre of $G$, the spaces $S_g(\zeta)$ are moduli spaces of stable holomorphic vector bundles over $\Sigma$ (see for example \cite{ab,tw,ek,ek2,kirwan,kirwan2}).

We will consider the space $S_g(t)$, where $t$ is a generic torus element with $\Stab t = T$. In this paper, we will take $G=SO(2n+1)$. Consider the torus bundle $V_g(t) \to S_g(t)$ given by $V_g(t):=\{\rho \in \Hom(\pi_1(\Sigma \setminus p),G) \vert \rho(c) =t\}$. If $\phi \in \Phi(\ggg)$ is a root of $\ggg$, let $L_\phi$ be the line bundle associated to $V_g(t)$ via the torus representation with weight $\phi$. We will construct geometric representatives for the first Chern classes of these line bundles. By considering these geometric representatives, we are able to identify several particular products of Chern classes which vanish in $H^*(S_g(t))$; we also give a combinatorial proof that any monomial in the $c_1(L_\phi)$ is equivalent in $H^*(S_g(t))$ to a combination of those particular monomials and hence also vanishes. Our geometric representatives are analogous to Schubert cycles for flag manifolds; however, a key difference is that there is no canonical complex structure on $S_g(t)$, and our geometric representatives for Chern classes will not generally be complex subvarieties of $S_g(t)$ with respect to an arbitrary choice of K\"ahler structure. Thus a feature of our topological approach is that it enables us to  make use of these particular geometric representatives which would not show up under an algebraic geometric treatment of the subject. 

This paper builds on our earlier work \cite{aj}, where we used a similar approach for the case $G=SU(n)$, making $S_g(t)$ the moduli space of parabolic holomorphic vector bundles over $\Sigma$. (For a different approach to finding generators and relations for the cohomology of this moduli space, see \cite{ek2}.) This itself was based on the earlier paper \cite{jw}, which used this geometric approach in the case $G=SU(2)$ to provide a geometric proof to a conjecture of Newstead (\cite{newstead}). 

Let us now take $G=SO(2n+1)$, and fix generators $a_1,\ldots, a_g, b_1, \ldots, b_g, c$ for the fundamental group $\pi_1(\Sigma \setminus \{p\})$, such that $c$ represents the boundary of $\Sigma \setminus \{p\}$ and $\prod_{l=1}^g[a_l, b_l]=c$. Recall that the set of roots of $\ggg$ is $\Phi(\ggg)=\{\pm(\eta_i + \eta_j) \mid 1 \leq i \leq j \leq n\}\cup \{\pm(\eta_i - \eta_j) \mid 1 \leq i < j \leq n\}$. Choose the maximal torus $T \subset G$ consisting of elements 
\[
\left(\begin{array}{cccccc}
\cos \theta_1 & -\sin \theta_1 & &&&\\
\sin \theta_1 & \cos \theta_1 &&&& \\
&& \ddots &&& \\
&&& \cos \theta_n & -\sin \theta_n & \\
&&& \sin \theta_n & \cos \theta_n & \\
&&&&&1\\
\end{array}\right);
\]
for ease of notation such elements will be denoted $(\theta_1, \ldots, \theta_n)$. 
\begin{defn}\label{generic}
We say an element $t = (\theta_1, \ldots, \theta_n)$ is \emph{generic} if $\Stab t = T$, and if the only relation $\lambda_1 \theta_1 + \cdots + \lambda_n \theta_n \in 2\pi \zzz $ with $\lambda_i \in \{\pm 1, 0\}$ is the trivial relation $\lambda_1 = \cdots = \lambda_n = 0$. 
\end{defn}
Let $t \in T$ be generic, and set
\[S_{n,g}(t) := \{ \rho \in \Hom(\pi_1(\Sigma \setminus p), SO(2n+1))\mid \rho(c) \sim t\}/G.
\]
Let $\ccc_{(\pm(\eta_i \pm \eta_j))}$ denote the 1-dimensional torus representation 
\begin{align*}
T \times \ccc_{(\pm(\eta_i \pm \eta_j))} & \to \ccc_{(\pm(\eta_i \pm \eta_j))} \\
(\theta_1, \ldots, \theta_n) \cdot z &\mapsto e^{\pm \sqrt{-1}(\eta_i \pm \eta_j)}z.
\end{align*}
For $\phi \in \Phi(\ggg)$ a root of $\ggg$, we consider line bundles $L_\phi:=(V_g(t)\times \ccc_{(\phi)})/T \to V_g(t)/T=S_{n,g}(t)$, where the quotient is by the diagonal $T$-action. We will also denote $L_{(\eta_i \pm \eta_j)}$ by $L_{ij}^\pm$; observe that $L_{-(\eta_i \pm \eta_j)}\cong (L_{\eta_i \pm \eta_j})^*$.

\begin{thm} \label{main} The product $\prod_{\phi \in \Phi(\ggg)}c_1(L_\phi)^{k_\phi} \in H^*(S_{n,g}(t);\qqq)$ vanishes whenever $\sum_{\phi \in \Phi(\ggg)}k_\phi \geq 2gn^2+ \frac{1}{2}(n-1)(n-2) $.\end{thm}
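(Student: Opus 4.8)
Let $\Phi^+\subset\Phi(\ggg)$ be a choice of positive roots, so $|\Phi^+|=n^2$, with corresponding Borel $T\subseteq B\subset SO(2n+1)$. Since $L_{-\phi}\cong L_\phi^*$ we have $c_1(L_{-\phi})=-c_1(L_\phi)$, and since the weight-$(\phi+\psi)$ torus representation is the tensor product of the weight-$\phi$ and weight-$\psi$ ones, $L_\phi\otimes L_\psi\cong L_{\phi+\psi}$, hence $c_1(L_\phi)+c_1(L_\psi)=c_1(L_{\phi+\psi})$ whenever $\phi,\psi,\phi+\psi$ are roots. Put $x_i:=c_1(L_{\eta_i})\in H^2(S_{n,g}(t);\qqq)$, the Chern class of the line bundle associated to $V_g(t)$ via the weight-$\eta_i$ character (over $\qqq$ a combination of the given $c_1(L_\phi)$). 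Then every $c_1(L_\phi)$ is the linear form $\ell_\phi(x):=\langle\phi,x\rangle$, the $c_1(L_\phi)$ generate the same subring $R\subseteq H^*(S_{n,g}(t);\qqq)$ as the $x_i$, and $R$ is exactly the image of the characteristic homomorphism $\qqq[x_1,\ldots,x_n]=H^*(BT;\qqq)\to H^*(S_{n,g}(t);\qqq)$ of the torus bundle $V_g(t)\to S_{n,g}(t)$; write $R=\qqq[x_1,\ldots,x_n]/J$. A monomial $\prod_\phi c_1(L_\phi)^{k_\phi}$ with $\sum k_\phi=m$ is the degree-$m$ polynomial $\prod_\phi\ell_\phi(x)^{k_\phi}$ in the $x_i$, and conversely each monomial $x^a$ with $\sum a_i=m$ is, up to a nonzero scalar, a monomial in the $c_1(L_\phi)$; so Theorem~\ref{main} is equivalent to: $J$ contains every monomial $x^a$ with $\sum_i a_i\ge d_0:=2gn^2+\tfrac12(n-1)(n-2)$.

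\emph{Geometric representatives.} For $\gamma\in\setgens$, a representation $W$ of $SO(2n+1)$, a $T$-weight vector $v\in W$ of weight $\nu$ and a $T$-weight covector $\lambda\in W^*$ of weight $\mu$, the function $\rho\mapsto\langle\lambda,\rho(\gamma)\cdot v\rangle$ on $V_g(t)$ is equivariant of weight $-(\mu+\nu)$ for the conjugation $T$-action; when $-(\mu+\nu)=\phi$ is a root this is a section $s^\gamma_\phi$ of $L_\phi$, whose zero set $Z^\gamma_\phi$ is a geometric cycle representing $c_1(L_\phi)$ (a small perturbation within this family makes it a pseudomanifold; we shall use only emptiness of intersections, so transversality is irrelevant). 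Taking $W=\ccc^{2n+1}$ the standard representation, these cycles are vanishing loci of single matrix coefficients of $\rho(\gamma)$ in the weight basis.

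\emph{Geometric vanishing.} Fix a proper parabolic $B\subseteq P\subsetneq SO(2n+1)$ with Levi $L$, and set $\Phi^+_P=\Phi^+\setminus\Phi^+_L$. For each generator $\gamma$, imposing the cycles $Z^\gamma_{-\psi}$ for all $\psi\in\Phi^+_P$ forces $\rho(\gamma)\in P$. Then $[\rho(a_l),\rho(b_l)]\in[P,P]$ for every $l$, and since $[P,P]$ is a subgroup, $t=\rho(c)=\prod_{l=1}^g[\rho(a_l),\rho(b_l)]\in[P,P]$. But $[P,P]=[L,L]\cdot U_P$, and every element there preserves the flag defining $P$ and acts on the associated graded through $[L,L]$, so its eigenvalues in $\ccc^{2n+1}$ coincide with those of an element of a maximal torus of $[L,L]$ — a subtorus of $T$ of rank $<n$, whose characters therefore obey a linear relation with coefficients in $\{0,\pm1\}$. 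By Definition~\ref{generic} the eigenvalues $e^{\pm\sqrt{-1}\theta_1},\ldots,e^{\pm\sqrt{-1}\theta_n},1$ of $t$ obey no such relation, so $t\notin[P,P]$ and the intersection of these cycles is empty. As $Z^\gamma_{-\psi}$ represents $-\ell_\psi(x)$ and we have used $2g$ generators, $\bigl(\prod_{\psi\in\Phi^+_P}\ell_\psi(x)\bigr)^{2g}\in J$; more generally, over-constraining individual generators, any monomial $\prod_{\gamma\in\setgens}\prod_{\psi\in S_\gamma}\ell_\psi(x)$ with every $S_\gamma\supseteq\Phi^+_P$ lies in $J$, as does anything divisible by it.

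\emph{Combinatorial reduction, and the main difficulty.} It remains to show $J\ni x^a$ for all $a$ with $\sum a_i\ge d_0$. The only further relations available are the additive ones $c_1(L_{\eta_i})=c_1(L_{\eta_i-\eta_j})+c_1(L_{\eta_j})$ and their analogues, which trade a factor $x_i$ for a factor $x_j$ at the cost of a factor $\ell_{\eta_i-\eta_j}$; iterating, any $x_i^k$ becomes a $\qqq$-combination of degree-$k$ monomials in the $\ell_\psi$ spread across all of $\Phi^+$. The combinatorial heart — which I expect to be the main obstacle, and which I would carry out following the corresponding step of \cite{aj} — is to organize this redistribution so that, once $\sum a_i\ge d_0$, $x^a$ becomes a combination of monomials each divisible by a product $\prod_{\gamma\in\setgens}\prod_{\psi\in S_\gamma}\ell_\psi$ with all $S_\gamma\supseteq\Phi^+_P$ for a common proper parabolic $P$, whence each term lies in $J$ by the previous paragraph and $x^a\in J$. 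One sets up a bookkeeping of the $\sum a_i$ factors against the $2g$ generators and the $n^2$ positive roots; the excess $\tfrac12(n-1)(n-2)=\binom{n-1}{2}$ beyond $2g|\Phi^+|=2gn^2$ is precisely the slack this redistribution consumes before each generator's share dominates the relevant $\Phi^+_P$. The only other point needing care — the verification that finite intersections of the cycles $Z^\gamma_\phi$ compute products of the $c_1(L_\phi)$ — is automatic in the one case we use, an empty intersection.
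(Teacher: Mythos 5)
Your reformulation in terms of $x_i=c_1(L_{\eta_i})$ and the ideal $J$ is a clean way to phrase the theorem over $\qqq$, and the structural observation that $t\notin[P,P]$ for $P$ a proper parabolic is correct as far as it goes. But the geometric vanishing step is where the proposal breaks: the assertion that imposing $Z^\gamma_{-\psi}=0$ for all $\psi\in\Phi^+_P$ forces $\rho(\gamma)\in P$ is false when $\Phi^+_P$ contains a short root $\eta_i$ and the section is a matrix coefficient in the standard representation. Concretely, take $n=2$ and $P$ the parabolic of $SO(5,\ccc)$ stabilising the line $\langle v_{\eta_1}\rangle$, so $\Phi^+_P=\{\eta_1,\eta_1+\eta_2,\eta_1-\eta_2\}$. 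Any matrix coefficient of conjugation-weight $-\eta_1$, $-\eta_1-\eta_2$ or $-\eta_1+\eta_2$ in $\ccc^5$ is of the form $B(\rho(\gamma)v_\nu,v_\mu)$ with $\nu+\mu\in\{\eta_1,\eta_1\pm\eta_2\}$, and one checks directly that all of these vanish not only when $\rho(\gamma)$ fixes $\langle v_{\eta_1}\rangle$ but also when $\rho(\gamma)$ sends $v_{\eta_1}$ into $\langle v_{-\eta_1}\rangle$, i.e.\ when the first $2\times2$ real block of $\rho(\gamma)$ is a reflection rather than a rotation. On this second branch $\rho(\gamma)\notin P$; since $[O(2),O(2)]=SO(2)$, the first-block component of $\prod[\rho(a_l),\rho(b_l)]$ can then be an arbitrary rotation. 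Taking $\rho(a_1)$ a rotation by $\theta_1/2$, $\rho(b_1)$ a reflection (both trivial outside the first block), and the remaining $\rho(a_l),\rho(b_l)$ filling out the $SO(3)$ factor so that the residual product of commutators is $(R_{\theta_2},1)$ (possible for $g\ge 2$) gives a representation satisfying all your section conditions with $\rho(c)=t$. So the claimed common zero locus is nonempty, and the product $\bigl(\prod_{\psi\in\Phi^+_P}\ell_\psi\bigr)^{2g}$, of degree $gn(n+1)$, need not lie in $J$. (That your approach would give vanishing at a degree strictly below the paper's $2gn^2$ for $n\ge 2$ is itself a warning sign.) The paper avoids this precisely because its sections are indexed by ordered pairs $(i,j)$, not by roots: the diagonal sections $s^+_{ii}$ live in $L_{2\eta_i}$, and the character $2\eta_i$ is \emph{not} a root of $\mathfrak{so}(2n+1)$ (the paper's displayed $\Phi(\ggg)$, which lists $\eta_i+\eta_i$ and omits $\eta_i$, is the $C_n$ system rather than the $B_n$ one — a notational accident that nonetheless matters here). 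It is exactly the vanishing of $s^+_{ii}$, combined with orthogonality, that forces the $i$-th diagonal block to be a rotation and rules out the reflection branch (Lemma \ref{genplus}). Your ``any representation $W$'' setup could in principle recover $s^+_{ii}$ (it is $B(\rho(\gamma)v_{-\eta_i},v_{-\eta_i})$), but then $-(\mu+\nu)=2\eta_i$ is not a root and falls outside your stated framework.

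Beyond this, the combinatorial reduction — showing that every monomial of degree $\ge 2gn^2+\tfrac12(n-1)(n-2)$ is a combination of the vanishing ones — is deferred entirely to ``following \cite{aj}.'' This is the bulk of the paper (Lemmas \ref{monster}, \ref{procedure}, Proposition \ref{alltogethernow}), and the paper explicitly needs the vanishing families of Lemma \ref{withunions}, where the pattern of sections (the unions of blocks $D_l$) is allowed to vary generator by generator; these are precisely what produce the $y^\pm_m$ in the $SO(5)$ warm-up and are not captured by ``a common proper parabolic $P$.'' As it stands the proposal has a genuine gap at both the geometric step (the vanishing claim is false as written) and the combinatorial step (not carried out).
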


The dimension of $S_{n,g}(t)$ is $2gn(2n+1)-2n(n+1)$, so when $g \geq \frac{3n}{2}$, our theorem shows that the top cohomology classes of $S_{n,g}(t)$ must be generated by elements other than the $c_1(L_\phi)$. 

We begin in the case $n=1$, the proof for which is analogous to Weitsman's proof (\cite{jw}) of the Newstead conjecture.  When $n=1$, we have just the one pair of roots $\pm\phi$ of $\mathfrak{so}(3)$, and the corresponding pair of line bundles $L_\phi$ and $L_{-\phi}=L_\phi^*$; of course $c_1(L_\phi)=-c_1(L_\phi^*)=-c_1(L_{-\phi})$. In this case, the theorem is that $c_1(L_\phi)^{2g}=0$, as follows:

\begin{prop} Let $G=SO(3)$. Let $t \in T$ be generic, and let $L \to S_{1,g}(t)$ be the line bundle associated to $V_g(t)$ by the representation $\left( \begin{array}{ccc} \cos \theta & -\sin \theta & \\ \sin \theta & \cos \theta & \\ &&1 \end{array} \right)\cdot z = e^{2i\theta}z$. Then $(c_1(L))^{2g}=0$ in $H^{4g}(S_{1,g}(t); \qqq)$.
\end{prop}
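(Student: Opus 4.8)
\emph{Proof plan.} The strategy, following \cite{jw}, is to produce $2g$ explicit geometric cycles $Z_x\subset S_{1,g}(t)$, one for each generator $x\in\{\genss\}$, each Poincar\'e dual to a multiple of $c_1(L)$, whose common intersection is empty. Concretely I will realize each $Z_x$ as the zero locus of a global section $\tau_x$ of $L$ with $Z_x=\{[\rho]\in S_{1,g}(t):\rho(x)\in T\}$; then the section $(\tau_{a_1},\tau_{b_1},\dots,\tau_{a_g},\tau_{b_g})$ of $L^{\oplus 2g}$ is nowhere vanishing, and since $c(L^{\oplus 2g})=(1+c_1(L))^{2g}$ has top Chern class $c_1(L)^{2g}$, the vanishing of the Euler class of a complex bundle with a nowhere-zero section gives $c_1(L)^{2g}=0$. (Equivalently: after small perturbation the $Z_x$ are transverse geometric representatives with empty common intersection.)

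The construction of $\tau_x$: a section of $L$ is a $T$-equivariant map $V_g(t)\to\ccc$ of weight $2$ (for the conjugation action on $V_g(t)$ and $(\theta)\cdot z=e^{2\sqrt{-1}\theta}z$ on $\ccc$). For $A\in SO(3)$ set $M(A):=\tfrac12(A+A^{T})-\tfrac12(\operatorname{tr}A-1)I$; by Rodrigues' formula $M(A)=(1-\cos\theta_A)\,n_An_A^{T}$, where $A$ is rotation through $\theta_A$ about the unit axis $n_A$, so $M(A)$ is a polynomial in the entries of $A$ (the sign ambiguity in $n_A$ causing no trouble). I take
\[
\tilde\tau_x(\rho):=\bigl(M_{11}-M_{22}+2\sqrt{-1}\,M_{12}\bigr)\bigl(\rho(x)\bigr)=(1-\cos\theta_{\rho(x)})\bigl((n_{\rho(x)})_1+\sqrt{-1}\,(n_{\rho(x)})_2\bigr)^2 .
\]
Since $M(hAh^{-1})=hM(A)h^{-1}$ for $h\in T$, and the traceless symmetric part of a $2\times 2$ block has weight $2$ under $SO(2)$-conjugation, $\tilde\tau_x$ is weight $2$ and defines $\tau_x\in\Gamma(L)$. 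Its zero set is exactly $\{\rho(x)\in T\}$: vanishing forces either $\cos\theta_{\rho(x)}=1$ (so $\rho(x)=I$) or $(n_{\rho(x)})_1=(n_{\rho(x)})_2=0$ (so $n_{\rho(x)}=\pm e_3$ and $\rho(x)\in T$); a $\pi$-rotation about a horizontal axis has $1-\cos\theta=2\ne0$ and horizontal axis, so it is \emph{not} a zero.

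Now if $[\rho]\in\bigcap_x Z_x$ then $\rho(a_l),\rho(b_l)\in T$ for every $l$; as $T$ is abelian each $[\rho(a_l),\rho(b_l)]=I$, so $\rho(c)=\prod_{l=1}^g[\rho(a_l),\rho(b_l)]=I$. But $\rho(c)=t$, and genericity of $t$ (Definition~\ref{generic}) gives $t\ne I$ --- a contradiction. Hence $\bigcap_x Z_x=\varnothing$, $(\tau_{a_1},\dots,\tau_{b_g})$ is nowhere zero, and $c_1(L)^{2g}=0$ in $H^{4g}(S_{1,g}(t);\qqq)$.

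The main obstacle, and essentially the only content, is arranging that $\tau_x$ has the \emph{smallest} possible zero set $\{\rho(x)\in T\}$. The naive choice --- the weight-$1$ map $\rho\mapsto(\rho(x)e_3)_1+\sqrt{-1}\,(\rho(x)e_3)_2$, a section of a weight-$1$ square root of $L$ --- vanishes on the larger set $\{\rho(x)e_3=\pm e_3\}=S(O(2)\times O(1))\subset SO(3)$, and since the commutators of elements of this subgroup all lie in $T\ni t$, the corresponding zero loci have a common point and the argument collapses. Squaring the $\ccc$-component of the axis (equivalently, working with the weight-$2$ bundle $L$) is precisely what removes the spurious $\pi$-rotation stratum; checking that this gives a genuine global section, and that Chern classes on the (compact, at worst orbifold) space $S_{1,g}(t)$ behave as usual with $\qqq$ coefficients, is then routine. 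This is the $SO(3)$ analogue of Weitsman's proof for $SU(2)$ with central holonomy $-I$ in \cite{jw}.
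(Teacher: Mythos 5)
Your section $\tau_x$ is in fact \emph{identical} to the paper's: expanding $M(A)=\tfrac12(A+A^{T})-\tfrac12(\operatorname{tr}A-1)I$, one has $M_{11}-M_{22}+2\sqrt{-1}\,M_{12}=A_{11}-A_{22}+\sqrt{-1}(A_{12}+A_{21})$, which is precisely the map $f_x$ used in the paper (the scalar multiple of $I$ you subtract drops out of the $M_{11}-M_{22}$ term). The Rodrigues-formula rewriting $M(A)=(1-\cos\theta_A)\,n_An_A^{T}$ is a pleasant way to see that the zero locus is exactly $\{\rho(x)\in T\}$, whereas the paper gets the same conclusion by a short direct computation with the orthogonality relations; both then finish identically by noting that a nowhere-vanishing section of $L^{\oplus 2g}$ kills the top Chern class $c_1(L)^{2g}$. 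So the proposal is correct and is essentially the paper's proof, dressed in axis-angle language.
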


\begin{proof}
For $x  \in \{a_1, \ldots, a_g, b_1, \ldots, b_g\}$, consider the section $s_x$ of $L$ induced by the $T$-equivariant map 
\begin{align*} f_x: V_g(t) &\to \ccc \\ \rho &\mapsto (\rho(x))_{11}-(\rho(x))_{22} + \sqrt{-1}((\rho(x))_{21}+(\rho(x))_{12})
\end{align*} (where the subscript $ij$ denotes the $(i,j)^{th}$ matrix entry). If $s_x=0$, then 
\[
\rho(x) = \left( \begin{array}{ccc} a & b & c \\ -b & a & d \\ e & f&g \end{array} \right) \in SO(3).
\]
 But since $\rho(x) \in SO(3)$, we also have $\vert c \vert = \vert d \vert$ and $cd=0$, thus $c=d=0$. Similarly $e=f=0$, and so $a^2 + b^2 = 1$ and $g=1$; that is, $\rho(x) \in T$. Suppose $\rho(x)=0$ for all $x \in \{a_1, \ldots, a_g, b_1, \ldots, b_g\}$. Then $\rho(a_1), \ldots, \rho(a_g), \rho(b_1), \ldots, \rho(b_g) \in T$. But then $\rho(c) = \prod_{i=1}^g [\rho(a_i), \rho(b_i)]=1\neq t$, so in fact there are no such $\rho$ in $V_g(t)$. Hence the section $(s_{a_1},\ldots, s_{a_g}, s_{b_1}, \ldots, s_{b_g})$ of $L^{\oplus 2g}$ is nowhere zero, so $c_{2g}(L^{\oplus 2g})=(c_1(L))^{2g}=0$.
\end{proof}

When $n > 1$, the combinatorics of the vanishing loci of the relevant sections becomes more complicated. The key idea in the proof of Theorem \ref{main} is similar to that in our earlier paper \cite{aj}, but the combinatorics required in this case is more intricate.

The outline of the rest of this paper is as follows. 

In section \ref{five}, we prove the theorem for the case $G=SO(5)$. As in \cite{aj}, the proof begins by identifying specific products of Chern classes which must vanish because there are sections (constructed explicitly) of the relevant line bundles with no common zeros. The proof is completed by showing that any monomial of sufficiently high degree is equivalent to a combination of these products of Chern classes which have already been shown to vanish. In section \ref{purple} we allow $n$ to be an arbitrary positive integer, exhibit some collections of sections of the appropriate line bundles with no common zeros, and conclude that the corresponding products of Chern classes vanish. In section \ref{heart} we turn to the combinatorial heart of the argument, showing that every product of Chern classes of sufficient degree is equivalent in $H^*(S_{n,g}(t))$  to a combination of those shown to vanish in section \ref{purple}. 

\subsection*{Acknowledgements} We would like to thank the anonymous referee for the extremely thorough and detailed comments. 

\subsection*{Authors' Contributions} E.A.G. and J.W. proposed the project and carried out research.  E.A.G. wrote the draft of the manuscript.  E.A.G. and J.W. approved the final version.

\section{$G=SO(5)$} \label{five}

We have chosen to devote a section to proving our result in the case $G=SO(5)$. This is intended to provide intuition for the general case in a more tractable context.
Let $G=SO(5)$ and let $T\subset G$ be the maximal torus 
\[
T = \left\{\left(\begin{array}{cccccc}
\cos \theta_1 & -\sin \theta_1 & &&&\\
\sin \theta_1 & \cos \theta_1 &&&& \\
&&& \cos \theta_2 & -\sin \theta_2 & \\
&&& \sin \theta_2 & \cos \theta_2 & \\
&&&&&1\\
\end{array}\right)\right\} \subset G;
\]
to simplify notation we will also denote elements of $T$ by $(\theta_1,\theta_2)$. Choose an element $t \in T$ that is generic in the sense that $\theta_1 \pm \theta_2 \notin 2\pi\zzz$. Recall that the roots of $\ggg$ are $\pm(2\eta_1, 2\eta_2, \eta_1 + \eta_2, \eta_1 - \eta_2)$. If $\phi = \pm(\eta_j \pm \eta_k)$ we denote by $\ccc_{(\phi)}$ the one dimensional torus representation $(\theta_1,\theta_2) \cdot z = e^{\pm i (\theta_j \pm \theta_k)}z$. Consider the line bundles $L_\phi = V_g(t) \times_T \cphi \to V_g(t)/T = S_{2,g}(t) $ for $\phi \in \Phi(G)$. Observe that $L_{-\phi} \cong L^*_{\phi}$, and $L_{\phi + \psi} \cong L_\phi \otimes L_\psi$. Hence $c_1(L^*_\phi)=-c_1(L_\phi)$, and $c_1(L_{\phi + \psi}) = c_1(L_\phi) + c_1(L_\psi)$. In this case, Theorem \ref{main} says 
\begin{prop} \label{propfive} The product $\prod_{\phi \in \Phi(G)} c_1 (L_\phi)^{k_\phi}$ vanishes whenever $\sum_{\phi \in \Phi(G)}k_\phi \geq 8g$. 
\end{prop}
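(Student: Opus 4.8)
The plan is to follow the two-step strategy indicated in the introduction (the one used in \cite{aj}), which for $SO(5)$ retains all the essential features while staying computable by hand.

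\textbf{Step 1: vanishing from sections with no common zero.} As in the proposition for $G=SO(3)$, for each generator $x\in\setgens$ I build $T$-equivariant maps $V_g(t)\to\ccc$ out of the matrix entries of $\rho(x)$ — writing $\rho(x)\in SO(5)$ in $2\times 2$ blocks adapted to the two rotation planes and the fixed axis, reading off the isotypic components of these blocks, and allowing also a few low-degree polynomial combinations of entries, possibly involving more than one generator. For each root $\phi$ these maps give sections of $L_\phi$, exactly as $f_x$ gave a section of $L$ when $n=1$. I then single out finite families $s_1,\dots,s_N$ of such sections, with $s_i$ a section of $L_{\phi_i}$, having no common zero on $V_g(t)$. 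The mechanism is the one from the $SO(3)$ argument: if all the $s_i$ vanished at some $\rho$, then each $\rho(x)$ — or certain pairs $\rho(a_i),\rho(b_i)$ — would be forced, after combining the vanishing conditions with the orthonormality of the rows and columns of $\rho(x)\in SO(5)$ (and the $n=1$ result applied to a residual $3\times 3$ block), into a proper subgroup, typically the maximal torus $T$ or a conjugate of $U(2)\times\{1\}$; and this would force $\rho(c)=\prod_i[\rho(a_i),\rho(b_i)]$ to lie in a fixed subgroup not containing $t$ — $\{1\}$, or a conjugate of $SU(2)\times\{1\}$, which has rank $1$ and hence contains no generic torus element — contradicting $\rho(c)\sim t$. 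For each such family, $(s_1,\dots,s_N)$ is a nowhere-zero section of $\bigoplus_{i=1}^N L_{\phi_i}$, so $\prod_{i=1}^N c_1(L_{\phi_i})=c_N\!\left(\bigoplus_i L_{\phi_i}\right)=0$; this produces a list of vanishing monomials in the $c_1(L_\phi)$, each of degree $\geq 8g$.

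\textbf{Step 2: reduction of an arbitrary high-degree monomial.} Using $c_1(L_{-\phi})=-c_1(L_\phi)$ and $c_1(L_{\phi+\psi})=c_1(L_\phi)+c_1(L_\psi)$, and setting $u=c_1(L_{\eta_1+\eta_2})$ and $v=c_1(L_{\eta_1-\eta_2})$ (so that $c_1(L_{\pm 2\eta_1})=\pm(u+v)$ and $c_1(L_{\pm 2\eta_2})=\pm(u-v)$), the subring of $H^*(S_{2,g}(t);\qqq)$ generated by all the $c_1(L_\phi)$ is a quotient of $\qqq[u,v]$, and $\prod_\phi c_1(L_\phi)^{k_\phi}=\pm(u+v)^p(u-v)^q u^r v^s$ with $p+q+r+s=\sum_\phi k_\phi$. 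Since a monomial of total degree larger than $8g$ is a multiple of one of degree exactly $8g$, it suffices to show that every $(u+v)^p(u-v)^q u^r v^s$ with $p+q+r+s=8g$ is a $\qqq$-linear combination of the vanishing monomials from Step 1 — equivalently, that those monomials span the $(8g+1)$-dimensional space $\qqq[u,v]_{8g}$. I would prove this by checking that the families of Step 1, as one varies which block, which sign of root, and which single- or multi-generator polynomial section is used at each of the $2g$ generators, realise a spanning set.

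\textbf{Main obstacle.} All the difficulty is in Step 2: one must arrange the no-common-zero families so that their top Chern classes span all of $\qqq[u,v]_{8g}$ and not a proper subspace. This is a real constraint — the cheapest families, which confine every $\rho(x)$ to $T$ by four sections per generator, contribute a factor $c_1(L_{2\eta_1})c_1(L_{2\eta_2})=u^2-v^2$ from the two diagonal blocks at each generator, so all the resulting monomials lie in $(u^2-v^2)^{2g}\qqq[u,v]_{4g}$, a subspace of dimension at most $4g+1$. Breaking out of this subspace forces one to use the subtler families (in particular those built from polynomial sections distributed over several generators), and the combinatorial bookkeeping showing that the resulting monomials do span is exactly the argument carried out in general in Section \ref{heart}; the $SO(5)$ case is where it can be seen on a manageable example.
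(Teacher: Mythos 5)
Your two-step structure matches the paper's, and the reformulation in terms of $u=c_1(L_{\eta_1+\eta_2})$ and $v=c_1(L_{\eta_1-\eta_2})$ is a clean way to see what has to be proved. However, as written this is an outline rather than a proof: Step~1 constructs no explicit sections and exhibits no explicit vanishing monomials, and Step~2 (the spanning argument) is deferred to ``the argument carried out in general in Section~\ref{heart}.'' Your own ``Main obstacle'' paragraph concedes that you have not resolved the dimension bound you correctly identify.

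Moreover your diagnosis of how to escape that bound is off. You suggest sections that are polynomial in the entries of several generators at once. But the vanishing products $\prod c_1(L_{\phi_i})$ depend only on \emph{which} line bundles $L_{\phi_i}$ the sections live in, not on how the sections are built; more elaborate sections of the same $L_\phi$'s cannot produce new vanishing monomials. What actually breaks out of $(u^2-v^2)^{2g}\qqq[u,v]_{4g}$ in the paper is a family with only \emph{three} sections per generator, giving a vanishing monomial of degree $6g$ rather than $8g$. Using $s^+_{11}(x),s^+_{12}(x),s^-_{12}(x)$ at every generator forces each $\rho(x)$ to be block-diagonal with a $2\times 2$ rotation in the upper-left block, so $\prod[\rho(a_l),\rho(b_l)]$ is the identity in that block and cannot equal a generic $t$. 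This yields $z_1=(c^+_{11}c^+_{12}c^-_{12})^{2g}=((u+v)uv)^{2g}$, which is not a multiple of $(u^2-v^2)^{2g}$; likewise $z_2=((u-v)uv)^{2g}$. The four-section families (the paper's $y^\pm_m$, obtained by choosing $s^-_{12}(x)$ at $m$ generators and $s^+_{21}(x)$ at the rest, all on top of $s^+_{11},s^+_{22},s^+_{12}$) are precisely the ones that sit inside $(u^2-v^2)^{2g}\qqq[u,v]_{4g}$ and are therefore not enough on their own. The closing proposition of Section~\ref{five} then shows by a short explicit expansion in $c^+_{11},c^+_{22}$ that every degree-$\geq 8g$ monomial is a combination of multiples of $z_1,z_2$, and $y^\pm_m$. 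You should construct $z_1$ and $z_2$ concretely rather than reaching for new kinds of sections, and then carry out that expansion; without those two ingredients the proposal does not establish the proposition.
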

As in the case of $G=SO(3)$, and following the strategy for $G=SU(n)$ in \cite{aj}, we prove this by identifying collections of sections of the $L_\phi$ with no common zeros, thus observing that the corresponding products of first Chern classes $c_1(L_\phi)$ vanish. We will identify three such vanishing products, and then prove that any other monomial of degree at least $8g$ is equivalent in $H^*(S_{2,g}(t))$ to a combination of those three.

\begin{defn} For $x \in \{a_l, b_l \mid 1 \leq l \leq g \} $, let $s^\pm_{ij}(x)$ be the section of $L_{\eta_i \pm \eta_j}$ induced by the $T$-equivariant map 
\begin{align*}
V_g(t) \to & \ccc_{(\eta_i \pm \eta_j)}\\
\rho \mapsto & (\rho(x))_{2i-1,2j-1} \mp (\rho(x))_{2i,2j} + \sqrt{-1}((\rho(x))_{2i,2j-1}\pm(\rho(x))_{2i-1,2j}).
\end{align*}
(Note that $s^-_{ii}(x)$ is a section of the trivial bundle $L_0$.)
\end{defn}
Observe that if $\spmijx$ vanishes, then the $(i,j)^{th}$ 2-by-2 block in $\rho(x)$ takes the form $\left(\begin{array}{cc} a & b \\ \mp b & \pm a \end{array}\right)$; if for any given $i,j$, both $\spijx$ and $\smijx$ vanish, then the $(i,j)^{th}$ 2-by-2 block in $\rho(x)$ is zero. 

To simplify notation, will will write $c^\pm_{ij} := c_1(L^\pm_{ij})$

\begin{lem} The monomial $z_1 := (c^+_{11}c^+_{12}c^-_{12})^{2g}$ vanishes in $H^{12g}(S_{2,g}(t))$.
\end{lem}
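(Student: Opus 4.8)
The plan is to exhibit a collection of sections of the line bundles $L^+_{11}$, $L^+_{12}$ and $L^-_{12}$ that have no common zero on $V_g(t)$, so that the corresponding product of Euler classes --- which is exactly $z_1$ --- vanishes. Concretely, I would take, for each $x \in \{a_l, b_l \mid 1 \le l \le g\}$, the three sections $s^+_{11}(x)$, $s^+_{12}(x)$ and $s^-_{12}(x)$. This is a section of $\bigl(L^+_{11} \oplus L^+_{12} \oplus L^-_{12}\bigr)^{\oplus 2g}$, a bundle of rank $6g$, and its Euler class is $\prod_x c^+_{11}c^+_{12}c^-_{12} = (c^+_{11}c^+_{12}c^-_{12})^{2g} = z_1$. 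So the lemma reduces to showing this section is nowhere zero.

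So suppose for contradiction that some $\rho \in V_g(t)$ lies in the common zero locus, i.e. $s^+_{11}(x) = s^+_{12}(x) = s^-_{12}(x) = 0$ for every $x \in \setgens$. By the observation following the definition of the sections, $s^+_{12}(x) = s^-_{12}(x) = 0$ forces the $(1,2)$ block of $\rho(x)$ to be zero, for all such $x$. The condition $s^+_{11}(x) = 0$ forces the $(1,1)$ block of $\rho(x)$ to have the form $\left(\begin{array}{cc} a & b \\ -b & a \end{array}\right)$. I would then use the orthogonality of $\rho(x) \in SO(5)$: knowing the $(1,2)$ block vanishes and the $(1,1)$ block is of rotation type, the first two rows of $\rho(x)$ have the shape $\left(\begin{array}{ccccc} a & b & 0 & 0 & u \\ -b & a & 0 & 0 & v\end{array}\right)$. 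Orthonormality of these two rows gives $a^2+b^2+u^2 = 1 = a^2 + b^2 + v^2$ and $au - bv + (-b)v\ldots$ --- more carefully, the inner product of the two rows is $-ab + ba + uv = uv$, so $uv = 0$, and equal norms give $u^2 = v^2$, hence $u = v = 0$. Thus the first two rows of $\rho(x)$, restricted to the last column, also vanish; combined with orthogonality of columns this forces the first two columns of $\rho(x)$ to be supported on the first two coordinates, so $\rho(x)$ is block-diagonal with a $2\times 2$ rotation block in position $(1,1)$ and an $SO(3)$ block in the lower right. In particular the first two rows/columns of $\rho(x)$ lie entirely in the $SO(2)$ corresponding to $\theta_1$.

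Now I would feed this back into the surface relation $\prod_{l=1}^g [\rho(a_l),\rho(b_l)] = \rho(c) \sim t$. Writing each $\rho(x)$ in the block form just obtained, every commutator $[\rho(a_l),\rho(b_l)]$ has its $\theta_1$-block equal to the identity (the $SO(2)$ factor is abelian, so its contribution to any commutator is trivial), hence so does the product; thus $\rho(c)$ acts trivially on the first two coordinates. But $\rho(c)$ is conjugate to the generic element $t = (\theta_1,\theta_2)$ with $\theta_1 \notin 2\pi\zzz$ (by genericity), so $\rho(c)$ has eigenvalues $e^{\pm i\theta_1}, e^{\pm i\theta_2}, 1$ and in particular cannot fix a $2$-dimensional subspace pointwise unless $\theta_1 \in 2\pi\zzz$ --- a contradiction. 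Hence no such $\rho$ exists, the section is nowhere zero, and $z_1 = 0$.

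The step I expect to be the main obstacle --- or at least the one requiring the most care --- is the linear-algebra argument showing that the three vanishing conditions on the $(1,1)$ and $(1,2)$ blocks, together with $\rho(x) \in SO(5)$, force $\rho(x)$ to be genuinely block-diagonal with the $\theta_1$-block decoupled; one must be careful that vanishing of the $(1,2)$ block alone is not enough, and that it is the interplay with orthonormality of the rows/columns of $\rho(x)$ that kills the remaining entries in the first two rows and columns. Once that structural statement is in hand, the contradiction via the commutator relation and the genericity of $t$ is immediate, exactly as in the $SO(3)$ proposition above. I should also double-check the bookkeeping that the relevant bundle really is $\bigl(L^+_{11}\oplus L^+_{12}\oplus L^-_{12}\bigr)^{\oplus 2g}$ and that its top Chern class is the stated monomial $z_1$, using $c_1(L^*_\phi) = -c_1(L_\phi)$ only where needed.
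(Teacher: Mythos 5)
Your proposal is correct and takes essentially the same approach as the paper's: the same $6g$ sections $s^+_{11}(x), s^+_{12}(x), s^-_{12}(x)$, the same use of orthonormality of rows and columns to force $\rho(x)$ into block-diagonal form with an $SO(2)$ block in the top left, and the same abelianness-of-$SO(2)$ argument on commutators to contradict genericity of $\rho(c)=t$. The only cosmetic difference is that the paper concludes by writing $\rho(c)$ explicitly and comparing its top-left block to $R_{\theta_1}$, while you phrase the contradiction in terms of eigenvalue multiplicities; both are immediate from Definition 1.1.
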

\begin{proof}Let $x \in \{a_l, b_l\}$, and consider the sections $s^+_{11}(x), s^+_{12}(x), s^-_{12}(x)$. If these all vanish, then 
$
\rho(x) = \left(\begin{array}{ccccc}
a & b &0 &0&c\\
-b & a &0&0&d \\
*&*&*& *& *\\
*&*&*& * & * \\
*&*&*&*&*\\
\end{array}\right)$, for some $a,b,c,d \in \rrr$. Since $\rho(x) \in SO(5)$, $c=d=0$, so in fact 
$\rho(x) = \left(\begin{array}{ccccc}
a & b &0 &0&0\\
-b & a &0&0&0 \\
0&0&*& *& *\\
0&0&*& * & * \\
0&0&*&*&*\\
\end{array}\right)$, for some $a, b \in \rrr$ with $a^2 + b^2 =1$. If all $6g$ sections $s^+_{11}(x), s^+_{12}(x), s^-_{12}(x)$, for $x = a_1, \ldots, a_g, b_1, \ldots, b_g$ vanish, then 
\begin{align*}
\rho(c) &= \prod_{l=1}^g[\rho(a_l),\rho(b_l)]\\ &= \prod_{l=1}^g \left[\left(\begin{array}{ccccc}
\cos \alpha_l & -\sin \alpha_l &0 &0&0\\
\sin \alpha_l & \cos \alpha_l &0&0&0 \\
0&0&*& *& *\\
0&0&*& * & * \\
0&0&*&*&*\\
\end{array}\right),\left(\begin{array}{ccccc}
\cos \beta_l & -\sin \beta_l &0 &0&0\\
\sin \beta_l & \cos \beta_l &0&0&0 \\
0&0&*& *& *\\
0&0&*& * & * \\
0&0&*&*&*\\
\end{array}\right) \right] \\ &= \left(\begin{array}{ccccc}
1 & 0 &0 &0&0\\
0 & 1 &0&0&0 \\
0&0&*& *& *\\
0&0&*& * & * \\
0&0&*&*&*\\
\end{array}\right) \neq t 
\end{align*}
 (since $t$ was chosen to be generic). But $\rho(c) = t$ for $\rho \in V_g(t)$, and so these $6g$ sections have no common zeros; thus $(c^+_{11} c^+_{12}c^-_{12})^{2g}=0$ as claimed. 
\end{proof}

Similarly, $z_2:=(c^+_{22} c^+_{12}c^-_{12})^{2g}=0$. 

\begin{lem}\label{fiveplus}Suppose $A_l \in SO(5)$ for $1 \leq l \leq g$, and each $A_l$ has the form $\left(\begin{array}{ccccc}
&&&&0\\
\multicolumn{2}{c}{\smash{\raisebox{.5\normalbaselineskip}{$R_{\alpha_l}$}}}&\multicolumn{2}{c}{\smash{\raisebox{.5\normalbaselineskip}{$R_{\beta_l}$}}}&0\\
&&&&0\\
\multicolumn{2}{c}{\smash{\raisebox{.5\normalbaselineskip}{$R_{\gamma_l}$}}}&\multicolumn{2}{c}{\smash{\raisebox{.5\normalbaselineskip}{$R_{\delta_l}$}}}&0\\
0&0&0&0&1
\end{array}\right)$, where $R_\theta$ represents the 2-by-2 block $\left(\begin{array}{cc}\cos \theta & - \sin \theta \\ \sin \theta & \cos \theta \\\end{array}\right)$. Then $\prod_{l=1}^g [A_{2l-1},A_{2l}]$ cannot be a generic torus element. 
\end{lem}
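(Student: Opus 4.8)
The plan is to use the fact that the prescribed block shape of the $A_l$ says precisely that each $A_l$ preserves a complex structure on the relevant $\rrr^4$, which equips the situation with a $U(1)$-valued invariant — a complex determinant — that is unchanged by commutators but is nontrivial on generic torus elements. So the first step is to make this complex structure explicit. Write $A_l = \operatorname{diag}(B_l, 1)$ with $B_l \in SO(4)$, and set $J = \operatorname{diag}(J_2, J_2)$, where $J_2 = \left(\begin{smallmatrix} 0 & -1 \\ 1 & 0\end{smallmatrix}\right)$, so that $\rrr^4 = \ccc^2$ with the coordinate $2$-planes as the two complex lines. Each of $R_{\alpha_l}, R_{\beta_l}, R_{\gamma_l}, R_{\delta_l}$ equals $(\cos\theta)I_2 + (\sin\theta)J_2$ and hence commutes with $J_2$, so $B_l$ commutes with $J$. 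I would then record the standard fact that the real algebra $\mathcal A \subset M_4(\rrr)$ of matrices commuting with $J$ is isomorphic to $M_2(\ccc)$ via the block-entrywise map $\mu$ sending a block $\left(\begin{smallmatrix} a & -b \\ b & a\end{smallmatrix}\right)$ to $a + \sqrt{-1}\,b$; in particular $\mu$ respects products and inverses, and $\mu(\operatorname{diag}(R_{\theta_1}, R_{\theta_2})) = \operatorname{diag}(e^{\sqrt{-1}\theta_1}, e^{\sqrt{-1}\theta_2})$.

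With this in hand I would argue by contradiction. Suppose $P := \prod_{l=1}^{g}[A_{2l-1}, A_{2l}]$ is a generic torus element. Since the torus consists of the matrices $\operatorname{diag}(R_{\theta_1}, R_{\theta_2}, 1)$, we have $P = \operatorname{diag}(R_{\theta_1}, R_{\theta_2}, 1)$ with $\theta_1 + \theta_2 \notin 2\pi\zzz$ by genericity. On the other hand $P = \operatorname{diag}\bigl(\prod_{l} [B_{2l-1}, B_{2l}],\, 1\bigr)$, and this product of commutators lies in $\mathcal A$ because each $B_l$ does; applying $\mu$ and using that the complex determinant is multiplicative and kills commutators, $\det_{\ccc}\mu\bigl(\prod_{l} [B_{2l-1},B_{2l}]\bigr) = \prod_{l}\det_{\ccc}[\mu(B_{2l-1}),\mu(B_{2l})] = 1$. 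But the same matrix equals $\operatorname{diag}(R_{\theta_1}, R_{\theta_2})$, whose image under $\mu$ has complex determinant $e^{\sqrt{-1}(\theta_1 + \theta_2)} \ne 1$ — a contradiction.

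I do not expect a serious obstacle here: the argument is essentially the single observation above, and the only steps requiring care are the bookkeeping in the identification $\mathcal A \cong M_2(\ccc)$ (checking, e.g., that a $2$-by-$2$ block commutes with $J_2$ exactly when it has the form $\left(\begin{smallmatrix} a & -b \\ b & a\end{smallmatrix}\right)$, so that $B_l$ genuinely lands in $\mathcal A$) and the verification that nontriviality of the complex determinant on a generic torus element is exactly the genericity hypothesis $\theta_1 \pm \theta_2 \notin 2\pi\zzz$. The same scheme is what one expects to use in section \ref{purple}, with $\operatorname{diag}(J_2,\dots,J_2)$ on $\rrr^{2n}$ and $\mu\colon \mathcal A \to M_n(\ccc)$, the relevant input from Definition \ref{generic} then being $\theta_1 + \dots + \theta_n \notin 2\pi\zzz$.
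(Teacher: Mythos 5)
Your proof is correct and, modulo packaging, is the same as the paper's: you identify the $A_l$ with elements of $U(2)$ (via the commutant algebra $\mathcal{A}\cong M_2(\ccc)$ of the complex structure $J$, which is precisely the image of the paper's explicit embedding $\kappa\colon U(2)\hookrightarrow SO(5)$) and observe that a product of commutators has complex determinant $1$, which forces $\theta_1+\theta_2\in 2\pi\zzz$ and so violates genericity. The paper phrases the endgame as a short case analysis on whether $\kappa^{-1}(P)$ is diagonal rather than a direct contradiction with $\det_\ccc=1$, but the underlying invariant is identical.
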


\begin{proof} Consider the map 
\begin{align*}
\kappa: U(2) &\hookrightarrow SO(5) \\
\kappa: \left(\begin{array}{cc}a & b\\ c & d \\\end{array}\right) & \mapsto \left(\begin{array}{ccccc} \Re a & -\Im a & \Re b & - \Im b & 0 \\\Im a & \Re a & \Im b & \Re b & 0 \\ \Re c & -\Im c & \Re d & - \Im d & 0 \\\Im c & \Re c & \Im d & \Re d & 0 \\ 0&0&0&0&1\\ \end{array}\right).
\end{align*} This is an injective homomorphism, and $\kappa(M) \in T \iff M$ is diagonal in $U(2)$. Each $A_l$ is in the image of $\kappa$; say $A_l=\kappa(M_l)$, for $M_l \in U(2)$. Then 
\begin{align*}
\prod_{l=1}^g [ A_{2l-1}, A_{2l}] &= \prod_{l=1}^g[\kappa(M_{2l-1}), \kappa(M_{2l})] \\
&= \kappa \left( \prod_{l=1}^g [M_{2l-1},M_{2l}]\right).
\end{align*}
Observe that $M=\prod_{l=1}^g [M_{2l-1}, M_{2l}]$ has determinant 1. If $M$ is not diagonal, then $\kappa(M) \neq t$. If $M$ is diagonal, then it must be $\left(\begin{array}{cc}e^{i\theta} & \\ & e^{-i\theta} \end{array} \right)$ for some $\theta$, so $\kappa(M) = \left(\begin{array}{ccc}R_\theta && \\ & R_{-\theta} & \\ &&1 \end{array} \right)$ is not generic. Thus in particular, $\prod_{l=1}^g [A_{2l-1}, A_{2l}] \neq t$. 
\end{proof}

\begin{cor}
The monomial $p=(c^+_{11}c^+_{12}c^+_{21}c^+_{22})^{2g}$ vanishes in $H^{16g}(S_{2,g}(t))$. 
\end{cor}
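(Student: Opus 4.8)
The plan is to reuse, essentially verbatim, the device behind the vanishing of $z_1$: exhibit a collection of $8g$ sections of the line bundles $L^+_{11},L^+_{12},L^+_{21},L^+_{22}$ with no common zero, so that the Euler class of their direct sum --- which equals $p$ --- vanishes. Concretely, for each $x\in\{a_l,b_l\mid 1\le l\le g\}$ take the four sections $s^+_{11}(x),s^+_{12}(x),s^+_{21}(x),s^+_{22}(x)$ of $L^+_{11},L^+_{12},L^+_{21},L^+_{22}$ respectively, and assemble all $8g$ of them into a single section $\sigma$ of the rank-$8g$ complex bundle
\[
E:=(L^+_{11})^{\oplus 2g}\oplus(L^+_{12})^{\oplus 2g}\oplus(L^+_{21})^{\oplus 2g}\oplus(L^+_{22})^{\oplus 2g}\longrightarrow S_{2,g}(t).
\]

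The main point is that $\sigma$ is nowhere zero. Suppose $\rho\in V_g(t)$ lies over a common zero. Then for every $x\in\{a_l,b_l\}$ the vanishing of $s^+_{11}(x),s^+_{12}(x),s^+_{21}(x),s^+_{22}(x)$ forces each of the four $2$-by-$2$ blocks in the upper-left $4\times 4$ corner of $\rho(x)$ to have the form $\left(\begin{smallmatrix}a&b\\-b&a\end{smallmatrix}\right)$. I would then use $\rho(x)\in SO(5)$ to fix the remaining entries: comparing rows $1$ and $2$ of $\rho(x)$ (and likewise $3$ and $4$), which are orthonormal, the block structure makes the first four coordinates contribute $0$ to their inner product and contribute equally to their squared norms, so the fifth coordinates $v_1,v_2$ of those two rows obey $v_1v_2=0$ and $v_1^2=v_2^2$, hence $v_1=v_2=0$; consequently the fifth column of $\rho(x)$ is $\pm e_5$, the fifth row is $\pm e_5^{\top}$, and $\rho(x)=\operatorname{diag}(M_4,e)$ with $M_4\in O(4)$ and $e=\pm1$. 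Finally, each of the four blocks commutes with $\left(\begin{smallmatrix}0&-1\\1&0\end{smallmatrix}\right)$, so $M_4$ is complex-linear for the block-diagonal complex structure on $\rrr^4\cong\ccc^2$, whence $\det_{\rrr}M_4=|\det_{\ccc}M_4|^2\ge 0$; being $\pm1$ it equals $1$, and since $\det\rho(x)=e\det M_4=1$ we get $e=1$. Thus each $\rho(x)$, $x\in\{a_1,\dots,a_g,b_1,\dots,b_g\}$, has exactly the form required in Lemma \ref{fiveplus}; feeding $\rho(a_1),\rho(b_1),\dots,\rho(a_g),\rho(b_g)$ into that lemma (as the matrices $A_1,\dots,A_{2g}$) shows $\rho(c)=\prod_{l=1}^g[\rho(a_l),\rho(b_l)]$ is not a generic torus element, so $\rho(c)\ne t$, contradicting $\rho\in V_g(t)$.

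Since $\sigma$ is a nowhere-zero section of the rank-$8g$ complex bundle $E$, the top Chern class of $E$ vanishes. The top Chern class of a direct sum is the product of those of the summands, and $c_{\mathrm{top}}\big((L^+_{ij})^{\oplus 2g}\big)=c_1(L^+_{ij})^{2g}=(c^+_{ij})^{2g}$, so
\[
0=c_{8g}(E)=(c^+_{11})^{2g}(c^+_{12})^{2g}(c^+_{21})^{2g}(c^+_{22})^{2g}=(c^+_{11}c^+_{12}c^+_{21}c^+_{22})^{2g}=p
\]
in $H^{16g}(S_{2,g}(t))$.

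The only step with any content is the middle one, where one must check that vanishing of precisely these four families of sections, together with the orthonormality relations in $SO(5)$, lands $\rho(x)$ in the image of the embedding $\kappa$ of Lemma \ref{fiveplus}, and in particular pins the bottom-right entry to $+1$ rather than $-1$. As in the proof that $z_1=0$, though, this is just a short finite computation with the $SO(5)$ relations, so I do not expect any real difficulty; all the substantive group theory has already been carried out in Lemma \ref{fiveplus}.
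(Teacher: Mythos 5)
Your proof is correct and follows the same strategy as the paper: show the $8g$ sections $s^+_{ij}(x)$ have no common zero by reducing to Lemma \ref{fiveplus}, then conclude via the top Chern class of the rank-$8g$ sum. The only difference is that you supply the determinant/complex-linearity argument that pins the $(5,5)$ entry to $+1$ (rather than $-1$), a point the paper's proof asserts without comment; this is a genuine gap you have correctly identified and filled.
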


\begin{proof} For each $x \in \{a_l, b_l \mid 1 \leq l \leq g\}$, consider the sections $s^+_{11}(x), s^+_{12}(x), s^+_{21}(x)$, and $s^+_{22}(x)$. If these were all to vanish, then each $\rho(x)$ would have the form $\left( \begin{array}{ccccc} a&b&c&d&\\ -b&a&-d&c&\\e&f&g&h&\\-f&e&-h&g&\\&&&&1\end{array}\right)$, where $a,b,c,d,e,f \in \rrr$. By Lemma \ref{fiveplus}, if these $8g$ sections all vanish, then $\prod_{l=1}^g[\rho(a_l),\rho(b_l)]\neq t$. Thus these $8g$ sections have no common zeros, so $(c^+_{11}c^+_{12}c^+_{21}c^+_{22})^{2g}=0$.  
\end{proof}

\begin{lem}
\label{fivepm}
Suppose $B_k \in SO(5)$ for $1 \leq k \leq 2g$, and each $B_k$ has the form $\left( \begin{array}{ccccc} a&b&c&d&\\ -b&a&d&-c&\\e&f&g&h&\\f&-e&-h&g&\\&&&&1\end{array}\right)$. Then $\prod_{k=1}^g[B_{2k-1},B_{2k}]\neq t$.\end{lem}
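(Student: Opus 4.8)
The plan is to mimic the proof of Lemma \ref{fiveplus}: realise the set of matrices of the displayed form as the image of an injective homomorphism $\kappa' \colon U(2) \hookrightarrow SO(5)$, use that a product of commutators in $U(2)$ lands in $SU(2)$, and observe that if $\kappa'$ of an element of $SU(2)$ lies in $T$ then that element is diagonal, which forces a linear relation among $\theta_1,\theta_2$. The only new feature compared with Lemma \ref{fiveplus} is that the relevant copy of $U(2)$ sits in $SO(5)$ via a \emph{twisted} complex structure on $\rrr^4$.

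First I would identify $\rrr^4$ with $\ccc^2$ by $\zeta_1 = x_1 + \sqrt{-1}\,x_2$, $\zeta_2 = x_3 - \sqrt{-1}\,x_4$ --- note the minus sign in $\zeta_2$, which is what distinguishes this from the $\kappa$ of Lemma \ref{fiveplus}. A direct check shows that a $5\times 5$ matrix of the form displayed in the statement, viewed as an $\rrr$-linear endomorphism of $\rrr^4$ fixing the fifth coordinate, is exactly a $\ccc$-linear endomorphism of $\ccc^2$ in these coordinates, the entries $a,\dots,h$ assembling into a complex $2\times 2$ matrix; conversely every $\ccc$-linear endomorphism arises this way. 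Since an orthogonal endomorphism that is also $\ccc$-linear is unitary, the matrices of the displayed form lying in $SO(5)$ are precisely the realifications of elements of $U(2)$, and this defines $\kappa'$. Tracking the identification yields two facts: the two off-diagonal $2\times 2$ blocks of $\kappa'(N)$ vanish if and only if $N$ is diagonal; and $\kappa'$ of a diagonal element $\mathrm{diag}(e^{\sqrt{-1}\mu}, e^{-\sqrt{-1}\mu})$ of $SU(2)$ is a torus element $(\mu,\mu)$ (up to signs), which is not generic.

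To conclude: write $B_k = \kappa'(N_k)$ with $N_k \in U(2)$; since $\kappa'$ is a homomorphism, $\prod_{k=1}^g[B_{2k-1},B_{2k}] = \kappa'(N)$ with $N = \prod_{k=1}^g[N_{2k-1},N_{2k}]$, and $\det N = 1$ so $N \in SU(2)$. If $\kappa'(N)$ equalled a generic torus element $t$, then $\kappa'(N)\in T$ would force $N$ diagonal, hence $N = \mathrm{diag}(e^{\sqrt{-1}\mu}, e^{-\sqrt{-1}\mu})$, and then the two rotation angles of $\kappa'(N)$ satisfy $\theta_1 \pm \theta_2 \in 2\pi\zzz$, contradicting genericity; so $\prod_{k=1}^g[B_{2k-1},B_{2k}] \neq t$. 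The hard part is purely bookkeeping: choosing the twisted complex structure so that the particular sign pattern of the four $2\times 2$ blocks in the statement matches $\ccc$-linearity, and then tracking signs carefully enough to read off which relation among $\theta_1,\theta_2$ is produced. Once the identification $\rrr^4\cong\ccc^2$ is chosen correctly, there is no conceptual difficulty.
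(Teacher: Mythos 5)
Your proposal is correct, but it takes a genuinely different route from the paper. The paper's proof does not redo the $U(2)\hookrightarrow SO(5)$ construction at all: it conjugates each $B_k$ by the permutation matrix $E$ that interchanges the first two coordinates, observes that $EB_kE^{-1}$ then has the form of Lemma~\ref{fiveplus}, invokes that lemma to conclude $\prod[EB_{2k-1}E^{-1},EB_{2k}E^{-1}]$ is not a generic torus element, and finishes by noting that $EtE^{-1}=(-\theta_1,\theta_2)$ is again generic, so $\prod[B_{2k-1},B_{2k}]\neq t$. Your approach instead repeats the argument of Lemma~\ref{fiveplus} from scratch for a twisted complex structure $\zeta_1=x_1+\sqrt{-1}\,x_2$, $\zeta_2=x_3-\sqrt{-1}\,x_4$; I have checked that the sign pattern in the displayed matrix does match complex linearity for this structure, that $\kappa'(N)\in T$ iff $N$ is diagonal, and that $\kappa'(\mathrm{diag}(e^{\sqrt{-1}\mu},e^{-\sqrt{-1}\mu}))=(\mu,\mu)$, which indeed violates genericity via $\theta_1-\theta_2\in 2\pi\zzz$. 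The two arguments are of course closely related --- conjugating by $E$ is precisely the change of complex structure --- but the paper's reduction is shorter and avoids re-verifying the bookkeeping, whereas yours is self-contained and makes the underlying $U(2)$ structure explicit for this twisted form. Both are valid; the only caveat is that your sketch defers the sign-checking to the reader, and that checking is where a mistake could hide, so it is worth actually writing out the $4\times 4$ realification as above rather than leaving it as ``purely bookkeeping.''
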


\begin{proof} Let $E = \left(\begin{array}{ccccc}0&1&&&\\ 1&0&&&\\ &&1&&\\&&&1&\\&&&&1\end{array}\right)$. Observe that $EB_kE^{-1}$ has the form described in Lemma \ref{fiveplus}. So by Lemma \ref{fiveplus}, $\prod_{k=1}^g[EB_{2k-1}E^{-1},EB_{2k}D^{-1}]$ cannot be a generic torus element. If $\prod_{k=1}^g[B_{2k-1},B_{2k}]= t$, then $\prod_{k=1}^g[EB_{2k-1}E^{-1},EB_{2k}D^{-1}]=EtE^{-1}$. Note that 
\[E \left( \begin{array}{ccc}R_{\theta_1}&&\\&R_{\theta_2}&\\&&1\end{array}\right)E^{-1}= \left( \begin{array}{ccc}R_{-\theta_1}&&\\&R_{\theta_2}&\\&&1\end{array}\right), \]
so $EtE^{-1}$ is a generic torus element. This is impossible by Lemma \ref{fiveplus}, hence $\prod_{k=1}^g[B_{2k-1},B_{2k}]\neq t$. 
\end{proof}

\begin{cor}
The monomial $q = (c^+_{11}c^+_{22}c^-_{12}c^-_{21})^{2g}$ vanishes in $H^{16g}(S_{2,g}(t))$. 
\end{cor}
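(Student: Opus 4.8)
The plan is to follow verbatim the strategy of the corollary after Lemma~\ref{fiveplus}: exhibit sections of $L^+_{11}, L^+_{22}, L^-_{12}, L^-_{21}$ with no common zero, so that the top Chern class of the corresponding direct sum bundle vanishes. For each $x \in \{a_l, b_l \mid 1 \leq l \leq g\}$ I would take the four sections $s^+_{11}(x)$, $s^+_{22}(x)$, $s^-_{12}(x)$, $s^-_{21}(x)$ of $L^+_{11}, L^+_{22}, L^-_{12}, L^-_{21}$ respectively, and suppose $\rho \in V_g(t)$ is a common zero of all $8g$ of them. By the block-form observation recorded just after the definition of the $s^\pm_{ij}$, the $(1,1)$- and $(2,2)$-blocks of each $\rho(x)$ then have ``rotation'' form $\left(\begin{smallmatrix} a & b \\ -b & a\end{smallmatrix}\right)$, while the $(1,2)$- and $(2,1)$-blocks have ``reflection'' form $\left(\begin{smallmatrix} c & d \\ d & -c\end{smallmatrix}\right)$; in other words $\rho(x)$ has exactly the shape of the matrix $B_k$ in Lemma~\ref{fivepm}, apart possibly from the entries in its last row and column.

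Next I would pin down the last row and column from orthogonality of $\rho(x) \in SO(5)$. Orthonormality of columns $1$ and $2$ forces their fifth entries $q_1, q_2$ to satisfy $q_1^2 = q_2^2$ and $q_1 q_2 = 0$, hence $q_1 = q_2 = 0$; similarly columns $3$ and $4$ give $q_3 = q_4 = 0$, so the last row is $(0,0,0,0,r)$ with $r^2 = 1$. Since conjugating the first two coordinates (the $E$-trick of Lemma~\ref{fivepm}) turns the top-left $4\times 4$ block into a complex-linear map, that block has nonnegative — indeed, by invertibility of $\rho(x)$, positive — determinant, and expanding $\det \rho(x) = 1$ along the last row gives $r = 1$. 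Thus each $\rho(a_l)$ and $\rho(b_l)$ is of the form covered by Lemma~\ref{fivepm}, so $\rho(c) = \prod_{l=1}^g [\rho(a_l), \rho(b_l)] \neq t$, contradicting $\rho \in V_g(t)$.

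Hence the $8g$ sections have no common zero and define a nowhere-vanishing section of $(L^+_{11} \oplus L^+_{22} \oplus L^-_{12} \oplus L^-_{21})^{\oplus 2g}$, a bundle of rank $8g$; its top Chern class, which equals $(c^+_{11} c^+_{22} c^-_{12} c^-_{21})^{2g} = q$, therefore vanishes in $H^{16g}(S_{2,g}(t))$. The only mildly delicate step is the second one: confirming that the vanishing of the four sections pushes $\rho(x)$ into precisely the normal form assumed in Lemma~\ref{fivepm}, including the last row and column; the rest is a direct transcription of the earlier corollary's proof, now invoking Lemma~\ref{fivepm} in place of Lemma~\ref{fiveplus}.
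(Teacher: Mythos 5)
Your argument is correct and follows the same route the paper takes: exhibit $s^+_{11}(x), s^+_{22}(x), s^-_{12}(x), s^-_{21}(x)$ for each generator $x$, show their common vanishing forces $\rho(x)$ into the normal form of Lemma~\ref{fivepm}, and conclude from that lemma that $\rho(c)=\prod[\rho(a_l),\rho(b_l)]\neq t$. The paper's proof of this corollary is a one-liner that simply asserts ``each $\rho(x)$ has the form described in Lemma~\ref{fivepm}''; you have spelled out the part the paper leaves tacit, namely that $SO(5)$-orthogonality kills the entries $(\rho(x))_{5,1},\ldots,(\rho(x))_{5,4}$ (and then, by orthogonality of rows against the last row, also $(\rho(x))_{1,5},\ldots,(\rho(x))_{4,5}$ — a step you left implicit but should state, since Lemma~\ref{fivepm} requires the last \emph{column} to be zero as well, not just the last row), and that the determinant argument via the $E$-conjugation pins down $(\rho(x))_{5,5}=1$ rather than $-1$. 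This is the same check the paper does explicitly in its proof of the $z_1$ lemma and then elides in the subsequent corollaries, so your extra care is well placed.
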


\begin{proof} Consider the sections $s^+_{11}(x), s^+_{22}(x), s^-_{12}(x)$, and $s^-_{21}(x)$ for each $x \in \{ a_l, b_l \vert 1 \leq l \leq g\}$. If these all vanish, then each $\rho(x)$ has the form described in Lemma \ref{fivepm}, and so $\prod_{l=1}^g[\rho(a_l),\rho(b_l)]\neq t = \rho(c)$. So these $8g$ sections have no common zeros, so $(c^+_{11}c^+_{22}c^-_{12}c^-_{21})^{2g}=0$.
\end{proof}

\begin{lem} 
Let $0 \leq m \leq 2g$. The monomials 
\begin{equation} \label{typeone}y^+_m:=(c^+_{11})^{2g}(c^+_{22})^{2g}(c^+_{12})^{2g+m}(c^-_{12})^{2g-m}
\end{equation} and 
\begin{equation} \label{typetwo} y^-_m:=(c^+_{11})^{2g}(c^+_{22})^{2g}(c^+_{12})^{2g-m}(c^-_{12})^{2g+m}
\end{equation}vanish in $H^{16g}(S_{2,g}(t))$.
\end{lem}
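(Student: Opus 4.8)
The plan is to prove both families vanish by the same device used for $z_1$, $z_2$, $p$, and $q$: for each monomial we exhibit a collection of $8g$ sections of the relevant line bundles (allowing their duals) with no common zero, so that the top Chern class of the corresponding direct sum of line bundles — which will be $\pm y^\pm_m$ — vanishes. The one extra ingredient we need is the following elementary observation about the indices: since $\eta_2+\eta_1=\eta_1+\eta_2$ and $\eta_2-\eta_1=-(\eta_1-\eta_2)$, we have $L^+_{21}\cong L^+_{12}$ and $L^-_{21}\cong(L^-_{12})^*$, so $s^+_{21}(x)$ is a \emph{second} section of $L^+_{12}$ and $s^-_{21}(x)$ is a section of $(L^-_{12})^*$, with $c^+_{21}=c^+_{12}$ and $c^-_{21}=-c^-_{12}$. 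These extra sections are what allow us to realize the off-balance exponents $2g\pm m$ with genuine sections rather than through an indirect cohomology computation.

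For $y^+_m$, fix a partition $\{a_l,b_l\mid 1\le l\le g\}=S\sqcup N$ with $|S|=m$ and $|N|=2g-m$. On each $x\in S$ take the four sections $s^+_{11}(x),s^+_{22}(x),s^+_{12}(x),s^+_{21}(x)$, and on each $x\in N$ take $s^+_{11}(x),s^+_{22}(x),s^+_{12}(x),s^-_{12}(x)$: this is $8g$ sections in all, and the top Chern class of the direct sum of the associated line bundles is exactly $y^+_m$ (each $x\in S$ contributes $c^+_{11}c^+_{22}(c^+_{12})^2$ and each $x\in N$ contributes $c^+_{11}c^+_{22}c^+_{12}c^-_{12}$). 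Suppose all these sections vanish at some $\rho\in V_g(t)$. For $x\in S$, all four $2$-by-$2$ blocks in the upper-left $4$-by-$4$ corner of $\rho(x)$ are of ``$+$''-type, so exactly as in the proof of the corollary following Lemma~\ref{fiveplus}, $\rho(x)=\kappa(M_x)$ for some $M_x\in U(2)$. For $x\in N$, both $s^+_{12}(x)$ and $s^-_{12}(x)$ vanish, so the $(1,2)$ block of $\rho(x)$ is zero; since $\rho(x)\in SO(5)$ has ``$+$''-type $(1,1)$ and $(2,2)$ blocks, orthonormality of its rows and columns, together with the $SO(3)$ argument used in the Proposition, forces the $(2,1)$ block, the last row and the last column all to vanish, so $\rho(x)=\mathrm{diag}(A_x,G_x,1)$ with $A_x,G_x\in SO(2)$, which lies in $T=\kappa(\{\text{diagonal unitaries}\})\subset\kappa(U(2))$. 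Hence every $\rho(x)$ lies in $\kappa(U(2))$, so $\rho(c)=\prod_l[\rho(a_l),\rho(b_l)]=\kappa(M)$ with $M=\prod_l[M_{a_l},M_{b_l}]\in SU(2)$. By the argument of Lemma~\ref{fiveplus}, $\kappa(M)$ is never a generic torus element, so $\rho(c)\neq t$, a contradiction. Thus the $8g$ sections have no common zero and $y^+_m=0$.

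For $y^-_m$ we argue symmetrically with the same partition, but on $x\in S$ we take $s^+_{11}(x),s^+_{22}(x),s^-_{12}(x),s^-_{21}(x)$, and on $x\in N$ we again take $s^+_{11}(x),s^+_{22}(x),s^+_{12}(x),s^-_{12}(x)$. Because $c^-_{21}=-c^-_{12}$, the top Chern class of the resulting direct sum is $(-1)^m y^-_m$, so it suffices to show these $8g$ sections have no common zero. If they all vanish at $\rho$, then for $x\in S$ the matrix $\rho(x)$ has precisely the form of Lemma~\ref{fivepm}, and for $x\in N$ it is (as above) $\mathrm{diag}(A_x,G_x,1)$, which is the special case of the Lemma~\ref{fivepm} form with vanishing off-diagonal blocks. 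Applying Lemma~\ref{fivepm} to all $2g$ matrices $\rho(a_l),\rho(b_l)$ gives $\prod_l[\rho(a_l),\rho(b_l)]\neq t$, contradicting $\rho(c)=t$. Hence $(-1)^m y^-_m=0$, so $y^-_m=0$.

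I expect the only point requiring genuine care to be the bookkeeping: choosing the $8g$ sections so that simultaneously (i) the associated direct sum has top Chern class $\pm y^\pm_m$, and (ii) the common vanishing locus is forced into $\kappa(U(2))$ (resp. the Lemma~\ref{fivepm} form), including the degenerate sub-case in which the $(1,2)$ block is zero and $\rho(x)$ drops into the torus $T$. Everything else is a direct appeal to the corollary of Lemma~\ref{fiveplus}, to Lemma~\ref{fivepm}, or to the $\kappa$-argument already recorded in the proof of Lemma~\ref{fiveplus}.
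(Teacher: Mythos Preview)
Your proof is correct and follows essentially the same approach as the paper: partition the $2g$ generators, use the ``all-plus'' (resp.\ the Lemma~\ref{fivepm}) sections on one part and the pair $s^+_{12},s^-_{12}$ on the other, observe that the latter forces the block-diagonal torus form which is a special case of the former, and then invoke Lemma~\ref{fiveplus} (resp.\ Lemma~\ref{fivepm}). Your sets $S,N$ play exactly the role of the paper's $X_m^c,X_m$, and your bookkeeping with $c^+_{21}=c^+_{12}$, $c^-_{21}=-c^-_{12}$ to obtain $y^+_m$ and $(-1)^m y^-_m$ is the same device the paper uses implicitly.
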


\begin{proof}For fixed $x$, we have seen above that the sections $s^+_{11}(x), s^+_{12}(x), s^+_{21}(x)$, and $s^+_{22}(x)$ all vanish when $\rho(x)$ has the form 
\begin{equation}\label{matformp}\left( \begin{array}{ccccc} a&b&c&d&\\ -b&a&-d&c&\\e&f&g&h&\\-f&e&-h&g&\\&&&&1\end{array}\right).\end{equation} Observe that if the sections $s^+_{11}(x), s^-_{12}(x), s^+_{12}(x)$, and $s^+_{22}(x)$ all vanish, then $\rho(x)$ has the form $\left( \begin{array}{ccccc} a&b&0&0&0\\ -b&a&0&0&0\\0&0&c&d&0\\0&0&-d&c&0\\0&0&0&0&1\end{array}\right)$, which is in particular also in the form \eqref{matformp}. Thus if $X_m$ is any $m$-element subset of $\{a_1, \ldots, a_g, b_1, \ldots, b_g\}$, then the collection of sections 
\begin{align*}
s^+_{11}(x), s^+_{22}(x), s^+_{12}(x) & \text{ for all } x \in \{a_1, \ldots, a_g, b_1, \ldots, b_g\},\\
s^-_{12}(x) & \text{ for all } x \in X^c_m, \\
s^+_{21}(x) & \text{ for all } x \in X_m
\end{align*} has no common zeros, so the monomial \eqref{typeone} vanishes.

Similarly, the collection of sections $s^+_{11}(x), s^+_{22}(x), s^-_{12}(x)$, and $s^-_{21}(x)$ vanishes when $\rho(x)$ has the form $\left( \begin{array}{ccccc} a&b&c&d&\\ -b&a&d&-c&\\e&f&g&h&\\f&-e&-h&g&\\&&&&1\end{array}\right)$, and the collection $s^+_{11}(x), s^+_{22}(x), s^-_{12}(x)$, and $s^+_{12}(x)$ vanishes when $\rho(x)$  has the form $\left( \begin{array}{ccccc} a&b&0&0&0\\ -b&a&0&0&0\\0&0&c&d&0\\0&0&-d&c&0\\0&0&0&0&1\end{array}\right)$, so by Lemma \ref{fivepm} the collection of sections \begin{align*}
s^+_{11}(x), s^+_{22}(x), s^-_{12}(x) & \text{ for all } x \in \{a_1, \ldots, a_g, b_1, \ldots, b_g\},\\
s^+_{12}(x) & \text{ for all } x \in X^c_m, \\
s^-_{21}(x) & \text{ for all } x \in X_m
\end{align*} has no common zeros, so the monomial \eqref{typetwo} vanishes.
\end{proof}

\begin{prop}
Suppose $\zeta$ is a monomial in the $c^\pm_{ij}$ with degree at least $8g$. Then $\zeta$ is a combination of monomials $z_1, z_2, y^+_m$, and $y^-_m$. 
\end{prop}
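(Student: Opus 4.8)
The plan is to collapse the eight classes $c^{\pm}_{ij}$, $1\le i,j\le 2$, down to two generators and then argue combinatorially with exponents. Since $L^-_{ii}\cong L_0$ is trivial and $L^+_{ij}\cong L^+_{ji}$, $L^-_{ji}\cong (L^-_{ij})^*$, any monomial containing $c^-_{11}$ or $c^-_{22}$ is already $0$, and a general $\zeta$ may be written, up to sign, as $(c^+_{11})^{P}(c^+_{22})^{Q}(c^+_{12})^{R}(c^-_{12})^{S}$ with $P+Q+R+S=\deg\zeta\ge 8g$. Next, $2\eta_1=(\eta_1+\eta_2)+(\eta_1-\eta_2)$ and $2\eta_2=(\eta_1+\eta_2)+(\eta_2-\eta_1)$, so $L^+_{11}\cong L^+_{12}\otimes L^-_{12}$ and $L^+_{22}\cong L^+_{12}\otimes(L^-_{12})^*$; writing $u:=c^+_{12}$, $v:=c^-_{12}$ this gives $c^+_{11}=u+v$, $c^+_{22}=u-v$. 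Expanding $(u+v)^{P}(u-v)^{Q}$ by the binomial theorem, $\zeta$ becomes a rational linear combination of monomials $u^{a}v^{b}$ with $a+b=\deg\zeta\ge 8g$. Rewriting the known vanishing monomials the same way, $z_1=(u+v)^{2g}u^{2g}v^{2g}$, $z_2=(u-v)^{2g}u^{2g}v^{2g}$, and $y^{\pm}_m=(u^2-v^2)^{2g}u^{2g\pm m}v^{2g\mp m}$. Thus it suffices to prove: every monomial $u^{a}v^{b}$ with $a+b\ge 8g$ lies in the ideal $I\subseteq\qqq[u,v]$ generated by $z_1,z_2$ and the $y^{\pm}_m$ — which is exactly what ``a combination of $z_1,z_2,y^{\pm}_m$'' means here.

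For the \emph{interior} monomials, namely those $u^{a}v^{b}$ with $a+b\ge 8g$ and $\min(a,b)\ge 2g$, I would use only $z_1$ and $z_2$. Here $z_1=u^{2g}v^{2g}(u+v)^{2g}$ and $z_2=u^{2g}v^{2g}(u-v)^{2g}$, and since $(u+v)^{2g}$ and $(u-v)^{2g}$ are coprime homogeneous polynomials of degree $2g$ they form a regular sequence in $\qqq[u,v]$; hence the ideal $\big((u+v)^{2g},(u-v)^{2g}\big)$ contains every polynomial all of whose monomials have degree $\ge 4g-1$ (equivalently, one can extract each needed monomial explicitly by taking alternating sums of $z_1u^{i}v^{j}$ and $z_2u^{i}v^{j}$ and running a short elimination on the binomial coefficients). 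Either way $(z_1,z_2)\supseteq u^{2g}v^{2g}\cdot\big((u+v)^{2g},(u-v)^{2g}\big)$ contains every $u^{a}v^{b}$ with $a,b\ge 2g$ and $a+b\ge 8g-1$; in particular all interior monomials of degree $\ge 8g$ lie in $I$.

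For the \emph{boundary} monomials $u^{a}v^{b}$ with $a+b=N\ge 8g$ and $\min(a,b)<2g$, I would induct downward on $\mu:=\min(a,b)$, from $\mu=2g-1$ to $\mu=0$. Given such a monomial, say $b=\mu\le a$, we have $a=N-\mu>6g$; consider the generator $y^{+}_{2g-\mu}=(u^2-v^2)^{2g}u^{4g-\mu}v^{\mu}\in I$ and multiply by $u^{N-8g}$:
\[
(u^2-v^2)^{2g}\,u^{N-4g-\mu}\,v^{\mu}=\sum_{k=0}^{2g}(-1)^{k}\binom{2g}{k}\,u^{N-\mu-2k}\,v^{\mu+2k}\ \in\ I.
\]
The $k=0$ term is exactly $u^{a}v^{b}$; every $k\ge 1$ term has $v$-exponent $\mu+2k>\mu$ and $u$-exponent $N-\mu-2k\ge N-\mu-4g\ge 4g-\mu>\mu$, hence has $\min$ strictly larger than $\mu$, so each such term is either interior (in $I$ by the previous paragraph) or boundary with larger $\min$ (in $I$ by the inductive hypothesis); note all these terms still have degree $N\ge 8g$. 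Therefore $u^{a}v^{b}\in I$, which closes the induction; together with the first two paragraphs this shows $\zeta\in I$, and so $\zeta=0$ in $H^{*}(S_{2,g}(t))$.

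The main obstacle is the bookkeeping in the last paragraph: one must pick the correct $y$-generator to multiply against and then verify that \emph{all} of the binomial correction terms genuinely have strictly larger $\min(\cdot,\cdot)$ while still lying in degree $\ge 8g$, so that the downward induction on $\mu$ is well founded and never leaves the range already settled. Combined with the coprimality (or explicit elimination) used to dispose of the interior, this is the combinatorial core of the proposition — and it is precisely this combinatorics that becomes considerably more delicate for general $n$ in Section \ref{heart}.
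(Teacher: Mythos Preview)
Your proof is correct and takes a genuinely different route from the paper's. Both arguments begin by collapsing to two generators, but the paper uses $c^+_{11},c^+_{22}$ and then performs a cascade of binomial substitutions (replace $c^+_{11}$ by $2c^+_{12}-c^+_{22}$, extract a power, replace $c^+_{12}$ by $c^-_{12}+c^+_{22}$, extract again, etc.), peeling off exponents $\ge 2g$ one variable at a time until every surviving term visibly contains one of $z_1,z_2,y^{\pm}_m$ as a factor. You instead pass to $u=c^+_{12}$, $v=c^-_{12}$ and split cleanly into an ``interior'' regime $\min(a,b)\ge 2g$, handled in one stroke by the regular-sequence fact that $((u+v)^{2g},(u-v)^{2g})$ contains all degrees $\ge 4g-1$, and a ``boundary'' regime, handled by a downward induction on $\min(a,b)$ using a single $y^{\pm}_{2g-\mu}$ to push up the minimum. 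Your version is more structural and makes the role of the generators $z_1,z_2$ versus $y^{\pm}_m$ transparent (the $z_i$ cover the interior, the $y^{\pm}_m$ reach the boundary); the paper's version is more hands-on but has the virtue of foreshadowing exactly the kind of iterated substitution that drives the general-$n$ argument in Section~\ref{heart}. Two small remarks: the case $a<b$ is handled by the evident $u\leftrightarrow v$ symmetry, which swaps $y^{+}_m\leftrightarrow y^{-}_m$ and fixes $z_1,z_2$, so your ``say $b=\mu\le a$'' is legitimate; and the opening line about monomials containing $c^-_{ii}$ being ``already $0$'' is fine but could be phrased as ``trivially in the ideal''.
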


\begin{proof}Use the relations 
\begin{align*}\label{fiverelns}
c^+_{12}&=\frac{1}{2}(c^+_{11}+c^+_{22}) \\
c^-_{12} &= \frac{1}{2}(c^+_{11}-c^+_{22})\\
c^-_{ij} &= -c^-_{ji}
\end{align*} to write $\zeta$ as a sum of terms $(c^+_{11})^a(c^+_{22})^b$, where $a+b \geq 8g$, and consider each term separately. We must have $a \geq 2g$ or $b \geq 2g$; without loss of generality assume $a \geq 2g$. Rewrite this term as $(c^+_{11})^{2g}(2c^+_{12}-c^+_{22})^{a-2g}(c^+_{22})^b$, and consider each term $\lambda(c^+_{11})^{2g}(c^+_{12})^{a-2g-m}(c^+_{22})^{b+m}$. Note that $a-2g+b \geq 6g$, so either $a-2g-m \geq 2g$ or $b+m \geq 2g$. \begin{itemize} \item If $a-2g-m \geq 2g$, rewrite this term as $\lambda'(c^+_{11})^{2g}(c^+_{12})^{2g}(c^-_{12}+c^+_{22})^{a-m-4g}(c^+_{22})^{b+m}$, and consider each term $\tilde{\lambda}(c^+_{11})^{2g}(c^+_{12})^{2g}(c^-_{12})^{a-m-4g-s}(c^+_{22})^{b+m+s}$ in the resulting expansion. Note that $a+b-4g \geq 4g$, so either $a-m-4g-s \geq 2g$ or $b+m+s \geq 2g$. \begin{itemize} \item If $a-m-4g-s \geq 2g$, this term is a multiple of $z_1$. \item If $b+m+s \geq 2g$, rewrite this term as $\bar{\lambda}(c^+_{11})^{2g}(c^+_{22})^{2g}(c^+_{12})^{2g}(c^-_{12})^{a-m-4g-s}(c^+_{12}-c^-_{12})^{b+m+s-2g}$. Each term in the expansion of this polynomial is a multiple of some $y^\pm_m$.  \end{itemize} \item If $b+m \geq 2g$, rewrite the term as $\lambda(c^+_{11})^{2g}(c^+_{22})^{2g}(c^+_{12})^{a-2g-m}(c^+_{12}-c^-_{12})^{b+m-2g}$. Each term in the expansion of this polynomial is a multiple of some $y^\pm_m$.\end{itemize}Hence $\zeta$ is equal to a sum of multiples of the monomials $z_1, z_2$ and $y^\pm_m$ as claimed.
\end{proof}

\begin{cor} 
$\prod_{\phi \in \Phi(SO(5))}c_1(L_\phi)^{k_\phi}$ vanishes whenever $\sum_{\phi \in \Phi(SO(5))}k_\phi \geq 8g.$ \qed 
\end{cor}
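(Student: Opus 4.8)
The plan is almost entirely bookkeeping, since this Corollary merely re-packages Proposition \ref{propfive} together with the vanishing results already established.

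First I would reduce an arbitrary product $\zeta = \prod_{\phi \in \Phi(SO(5))} c_1(L_\phi)^{k_\phi}$ to a monomial in the four classes $c^+_{11}, c^+_{22}, c^+_{12}, c^-_{12}$. The roots of $SO(5)$ are $\pm 2\eta_1,\ \pm 2\eta_2,\ \pm(\eta_1+\eta_2),\ \pm(\eta_1-\eta_2)$, with $L_{2\eta_i}=L^+_{ii}$ and $L_{-\phi}\cong L_\phi^*$. Using $c_1(L_\phi^*)=-c_1(L_\phi)$ and $c^-_{21}=-c^-_{12}$, one rewrites $\zeta$, up to an overall sign, as a monomial in $c^+_{11}, c^+_{22}, c^+_{12}, c^-_{12}$: each factor $c_1(L_\phi)$ is traded for $\pm$ one of these four classes, so the total number of factors is unchanged and equals $\sum_{\phi} k_\phi$.

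Now suppose $\sum_\phi k_\phi \geq 8g$. Then $\zeta$ is, up to sign, a monomial in the $c^\pm_{ij}$ of degree at least $8g$, so the preceding Proposition applies and exhibits $\zeta$ as a $\qqq$-linear combination of the monomials $z_1$, $z_2$, $y^+_m$, and $y^-_m$ for $0 \leq m \leq 2g$. Each of these vanishes in $H^*(S_{2,g}(t);\qqq)$: $z_1$ and $z_2$ because the $6g$ sections $s^+_{11}(x), s^+_{12}(x), s^-_{12}(x)$ (respectively with $s^+_{22}(x)$ in place of $s^+_{11}(x)$), taken over $x \in \{a_l, b_l\}$, have no common zero; and $y^+_m$, $y^-_m$ because of the mixed collections of sections assembled from Lemmas \ref{fiveplus} and \ref{fivepm}. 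Therefore $\zeta = 0$, which is exactly the assertion of the Corollary.

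I do not expect any genuine obstacle here: all of the substantive work — the explicit geometric construction of collections of sections with no common zeros, and the combinatorial argument reducing an arbitrary high-degree monomial to the finite list $z_1, z_2, y^\pm_m$ — has already been carried out above. The one item worth a moment's care is the reduction step, where one must verify that folding $c_1(L_{-\phi}) = -c_1(L_\phi)$ and $L_{2\eta_i}=L^+_{ii}$ leaves the count $\sum_\phi k_\phi$ unchanged; it does, since each $c_1(L_\phi)$ is replaced by a single signed generator and no cancellation or combination of factors occurs at that stage.
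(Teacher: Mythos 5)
Your proof is correct and follows the paper's implicit argument: the paper marks this corollary with \qed precisely because it is an immediate combination of the preceding proposition (any monomial in the $c^\pm_{ij}$ of degree $\geq 8g$ is a $\qqq$-linear combination of $z_1$, $z_2$, $y^\pm_m$) with the lemmas showing each of those particular monomials vanishes. Your one-paragraph bookkeeping step — trading each $c_1(L_\phi)$ for $\pm$ one of $c^+_{11},c^+_{22},c^+_{12},c^-_{12}$ via $c_1(L_{-\phi})=-c_1(L_\phi)$ and $c^-_{21}=-c^-_{12}$, noting the total degree is preserved — is exactly the routine reduction the paper leaves to the reader.
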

\section{The general case} \label{purple}
We now begin our study of the general case $G=SO(2n+1)$. As we did for $SO(5)$ in the previous section, we will show that some particular products of the $c_1(L_\phi)$, for roots $\phi$ of $\mathfrak{so(2n+1)}$, vanish, by finding sections of the $L_\phi$ with no common zeros. 

A $T$-equivariant map $V_g(t) \to \ccc_{(\phi)}$ induces a section of $L_\phi = (V_g(t) \times \cphi)/T \to V_g(t)/T=S_{n,g}(t)$. Let $x \in \{a_1, \ldots, a_g, b_1, \ldots, b_g\}$ be one of our chosen generators of $\pi_1(\Sigma\setminus \{p\})$ other than $c$, and consider the maps
\begin{align*}
f^\pm_{ij}(x): V_g(t)&\to \ccc_{(\eta_i \pm \eta_j)} \\
\rho & \mapsto (\rho(x))_{2i-1,2j-1}\mp (\rho(x))_{2i,2j}+ \sqrt{-1}((\rho(x))_{2i,2j-1} \pm (\rho(x))_{2i-1,2j}). 
\end{align*} These maps are $T$-equivariant and induce sections $\spmijx$ of $\lpmij$. The nature of the $T$ action on $SO(2n+1)$ is such that top left $2n$-by-$2n$ corner of elements of $SO(2n+1)$ can be divided into 2-by-2 matrices on which the behaviour of the torus action can be considered separately. Thus it will often be convenient to write elements of $SO(2n+1)$ in the form 
$\left(\begin{array}{ccccccc}
&&&&&&b_1\\
\multicolumn{2}{c}{\smash{\raisebox{.5\normalbaselineskip}{$A_{11}$}}}&\multicolumn{2}{c}{\smash{\raisebox{.5\normalbaselineskip}{$\cdots$}}}&\multicolumn{2}{c}{\smash{\raisebox{.5\normalbaselineskip}{$A_{1n}$}}}&b_2\\
\multicolumn{2}{c}{\vdots}&\multicolumn{2}{c}{\ddots}&\multicolumn{2}{c}{\vdots}&\vdots\\
&&&&&&b_{2n-1}\\
\multicolumn{2}{c}{\smash{\raisebox{.5\normalbaselineskip}{$A_{n1}$}}}&\multicolumn{2}{c}{\smash{\raisebox{.5\normalbaselineskip}{$\cdots$}}}&\multicolumn{2}{c}{\smash{\raisebox{.5\normalbaselineskip}{$A_{nn}$}}}&b_{2n}\\

c_1&c_2&\multicolumn{2}{c}{{\cdots}}&c_{2n-1}&c_{2n}&d
\end{array}\right)$, adopting the notational convention that capital letters denote 2-by-2 arrays whereas lowercase letters represent real numbers. 

The next lemma is the direct generalisation of Lemma \ref{fiveplus} to the general case.
\begin{lem}
\label{genplus} 
Suppose $M_l \in SO(2n+1)$ for $1 \leq l \leq 2g$, and each $M_l$ has the form 
\begin{equation}\label{plusmat}
\left(\begin{array}{ccccccc}
&&&&&&0\\
\multicolumn{2}{c}{\smash{\raisebox{.5\normalbaselineskip}{$R^{11}_l$}}}&\multicolumn{2}{c}{\smash{\raisebox{.5\normalbaselineskip}{$\cdots$}}}&\multicolumn{2}{c}{\smash{\raisebox{.5\normalbaselineskip}{$R^{1n}_l$}}}&0\\
\multicolumn{2}{c}{\vdots}&\multicolumn{2}{c}{\ddots}&\multicolumn{2}{c}{\vdots}&\vdots\\
&&&&&&0\\
\multicolumn{2}{c}{\smash{\raisebox{.5\normalbaselineskip}{$R^{n1}_l$}}}&\multicolumn{2}{c}{\smash{\raisebox{.5\normalbaselineskip}{$\cdots$}}}&\multicolumn{2}{c}{\smash{\raisebox{.5\normalbaselineskip}{$R_l^{nn}$}}}&0\\

0&0&\multicolumn{2}{c}{{\cdots}}&0&0&1
\end{array}\right),
\end{equation}where each $R^{ij}_l$ is a 2-by-2 block of the form $\left(\begin{array}{cc}x&y\\-y&x\end{array}\right)$, with $x, y \in \rrr$. Then $\prod_{l=1}^g[M_{2l-1},M_{2l}]$ cannot be a generic torus element.  \end{lem}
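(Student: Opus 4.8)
The plan is to prove Lemma~\ref{genplus} by the same device used for Lemma~\ref{fiveplus}, replacing the embedding $\kappa\colon U(2)\hookrightarrow SO(5)$ there by the analogous block embedding $\kappa\colon U(n)\hookrightarrow SO(2n+1)$. Let $[\,\cdot\,]\colon\ccc\to M_2(\rrr)$ be the unital ring homomorphism sending $w=x+\sqrt{-1}\,y$ to the $2$-by-$2$ block $\left(\begin{smallmatrix}x&y\\-y&x\end{smallmatrix}\right)$, and for $N=(w_{ij})\in M_n(\ccc)$ let $\kappa(N)$ be the matrix of the form \eqref{plusmat} whose $(i,j)$ block is $[\,w_{ij}\,]$, bordered by a final row and column $(0,\dots,0,1)$. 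By construction the matrices of the form \eqref{plusmat} are exactly the matrices $\kappa(N)$ with $N\in M_n(\ccc)$.

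First I would record the elementary properties of $\kappa$. Since $[\,\cdot\,]$ is an injective unital ring homomorphism, so is $\kappa$, which therefore restricts to an injective group homomorphism on invertible elements. Because $[\,\overline{w}\,]=[\,w\,]^{\mathsf T}$ we get $\kappa(N)^{\mathsf T}=\kappa(N^{*})$, so for $N\in U(n)$ we have $\kappa(N)^{\mathsf T}\kappa(N)=\kappa(N^{*}N)=I$; thus $\kappa$ maps $U(n)$ into $O(2n+1)$, and since $\kappa(U(n))$ is connected and contains $I$ it in fact lands in $SO(2n+1)$. Conversely, any $M\in SO(2n+1)$ of the form \eqref{plusmat} equals $\kappa(N)$ for the $N\in M_n(\ccc)$ read off from its blocks, and orthogonality of $M$ forces $N^{*}N=I$, i.e.\ $N\in U(n)$. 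Finally, for $N\in U(n)$ one has $\kappa(N)\in T$ if and only if $N$ is diagonal --- an off-diagonal entry of $N$ gives an off-diagonal block of $\kappa(N)$ --- in which case $\kappa(N)=(\theta_1,\dots,\theta_n)$ corresponds to $N=\operatorname{diag}(e^{-\sqrt{-1}\theta_1},\dots,e^{-\sqrt{-1}\theta_n})$.

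Then the argument of Lemma~\ref{fiveplus} applies almost verbatim. Writing $M_l=\kappa(N_l)$ with $N_l\in U(n)$, and using that $\kappa$ is a homomorphism, we get $\prod_{l=1}^g[M_{2l-1},M_{2l}]=\kappa(N)$ with $N:=\prod_{l=1}^g[N_{2l-1},N_{2l}]$; as a product of commutators, $\det N=1$, so $N\in SU(n)$. If $\prod_{l=1}^g[M_{2l-1},M_{2l}]$ were a generic torus element $(\theta_1,\dots,\theta_n)\in T$, then $\kappa(N)\in T$ would force $N$ diagonal, hence $N=\operatorname{diag}(e^{-\sqrt{-1}\theta_1},\dots,e^{-\sqrt{-1}\theta_n})$ and $1=\det N=e^{-\sqrt{-1}(\theta_1+\dots+\theta_n)}$, giving $\theta_1+\dots+\theta_n\in2\pi\zzz$; but this is a nontrivial relation with every $\lambda_i=1\in\{0,\pm1\}$, contradicting Definition~\ref{generic}. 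I would not expect a genuine obstacle here: the proof is a faithful copy of the $SO(5)$ case, and the only point that needs care is pinning down the sign and transpose conventions in $\kappa$ so that \eqref{plusmat} is precisely its image and diagonal unitary matrices are sent into the chosen torus $T$. Everything after that is bookkeeping; the substantive combinatorial difficulty of the general case lies in the later sections, not in this lemma.
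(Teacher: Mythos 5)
Your proof is correct and follows essentially the same route as the paper: both define the block embedding $\kappa\colon U(n)\hookrightarrow SO(2n+1)$, observe it is an injective homomorphism whose intersection with $T$ is exactly the diagonal unitaries, write each $M_l=\kappa(N_l)$, and use that the product of commutators in $U(n)$ has determinant $1$ to force $\theta_1+\cdots+\theta_n\in 2\pi\zzz$, contradicting genericity. The only cosmetic difference is that you carefully fix the sign/transpose convention on the $2\times2$ blocks so that \eqref{plusmat} is exactly the image of $\kappa$; the paper's $\kappa$ uses the opposite sign convention, which amounts to composing with complex conjugation and does not affect the argument.
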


\begin{proof}Consider the map 
\begin{align*}
\kappa:U(n) &\hookrightarrow SO(2n+1)\\
\kappa: \left(\begin{array}{ccc}a_{11}&\cdots & a_{1n}\\ \vdots & \ddots & \vdots \\ a_{n1}& \cdots & a_{nn} \end{array}\right) & \mapsto \left(\begin{array}{cccccc} \Re a_{11} & -\Im a_{11} & \cdots &&&0\\ \Im a_{11}& \Re a_{11} &&&&0\\ &&\ddots &&&\vdots \\ &&&\Re a_{nn} & -\Im a_{nn} &0 \\ &&& \Im a_{nn} & \Re a_{nn}&0\\0&0&\cdots&0&0&1\end{array}\right).
\end{align*} This is an injective homomorphism, and $\kappa(M) \in T \iff M$ is diagonal in $U(n)$. Under the hypothesis of this lemma, each $M_l$ is in the image of $\kappa$; say $M_l = \kappa(N_l)$, for $N_l \in U(n)$. Then 
\[\prod_{l=1}^g[M_{2l-1},M_{2l}] = \prod_{l=1}^g[\kappa(N_{2l-1}), \kappa(N_{2l})]=\kappa\left(\prod_{l=1}^g[N_{2l-1},N_{2l}]\right).
\] Observe that if $N=\prod_{l=1}^g[N_{2l-1},N_{2l}]$ has determinant 1. If $N$ is not diagonal, then $\kappa(N)$ is not a torus element (so certainly not a generic torus element). Assume $\kappa(N)$ is a torus element. Then $N$ is a diagonal matrix in $U(n)$ of determinant 1, so is $\left(\begin{array}{ccc}e^{i\theta_1}&&\\&\ddots&\\&&e^{i\theta_n}\end{array}\right)$, where $\theta_1 + \ldots + \theta_n \in 2\pi \zzz$. Thus $\kappa(N)$ is $(\theta_1, \ldots, \theta_n) \in T$, which is not generic in the sense of Definition \ref{generic} because $\theta_1 + \ldots + \theta_n \in 2 \pi \zzz$.  
\end{proof}

\begin{cor}
The sections $\{\spijx \vert 1 \leq i, j \leq n, x \in \{a_1, \ldots, a_g, b_1, \ldots, b_g\}\}$ of the line bundles $\lpij$ have no common zeros. \end{cor}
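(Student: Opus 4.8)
The plan is to argue by contradiction. Suppose the sections $\{\spijx : 1 \leq i,j \leq n,\ x \in \{\genss\}\}$ had a common zero, i.e. there is some $\rho \in V_g(t)$ with $\spijx(\rho) = 0$ for every $i,j$ and every generator $x$. Then I would translate the vanishing of these sections into a structural constraint on each matrix $\rho(x)$: by the observation recorded just after the definition of $\spmijx$, if $\spijx$ vanishes then the $(i,j)$-th $2$-by-$2$ block of $\rho(x)$ has the form $\left(\begin{array}{cc} a & b \\ -b & a \end{array}\right)$. Since this holds for \emph{all} pairs $i,j$ with $1 \leq i,j \leq n$ simultaneously, every $2$-by-$2$ block in the top-left $2n \times 2n$ corner of $\rho(x)$ has this form.

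Next I would pin down the last row and column. Because $\rho(x) \in SO(2n+1)$ and the top-left $2n \times 2n$ block consists entirely of blocks of the form $\left(\begin{array}{cc} a & b \\ -b & a \end{array}\right)$, I would check that this block is already (a scalar multiple of) an orthogonal matrix on its own — more precisely, such block matrices are exactly the image of $U(n) \hookrightarrow SO(2n)$ under the standard embedding, hence orthogonal. Orthogonality of the full $(2n+1) \times (2n+1)$ matrix then forces the off-diagonal entries $b_1, \dots, b_{2n}, c_1, \dots, c_{2n}$ in the last row and column to vanish and $d = 1$. So $\rho(x)$ has exactly the form \eqref{plusmat} of Lemma \ref{genplus}, for every $x \in \{\genss\}$.

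Finally I would apply Lemma \ref{genplus} directly: with $M_l := \rho(a_l)$ and $M_{g+l} := \rho(b_l)$ — or rather indexing so the $M_l$ run through all $2g$ generators $a_1, \dots, a_g, b_1, \dots, b_g$ — each $M_l$ has the form \eqref{plusmat}, so $\prod_{l=1}^g [M_{2l-1}, M_{2l}] = \prod_{l=1}^g [\rho(a_l), \rho(b_l)]$ cannot be a generic torus element. But $\rho \in V_g(t)$ means $\prod_{l=1}^g [\rho(a_l), \rho(b_l)] = \rho(c) = t$, and $t$ was chosen generic — a contradiction. Hence no such $\rho$ exists, and the sections have no common zero.

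The argument is essentially a routine assembly of pieces already in place; the only point requiring a little care is the second step, verifying that orthogonality of $\rho(x)$ really does force the entire last row and column to collapse (equivalently, that a $2n \times 2n$ orthogonal matrix built from blocks $\left(\begin{array}{cc} a & b \\ -b & a \end{array}\right)$ cannot be extended to an element of $SO(2n+1)$ with a nonzero bordering entry). This is the analogue of the explicit entry-by-entry check done for $SO(5)$ in the corollary following Lemma \ref{fiveplus}, and I expect it to go through identically; the main (mild) obstacle is simply making sure the bookkeeping of which blocks are constrained is stated cleanly for general $n$ rather than exhibited in a displayed matrix as in the $SO(5)$ case.
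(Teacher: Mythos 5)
Your overall route is exactly the paper's: show that if all the $\spijx$ vanish for a given $x$ then $\rho(x)$ has the form \eqref{plusmat}, then invoke Lemma \ref{genplus} to contradict $\rho(c)=t$. The one place your reasoning goes astray is the justification for killing the last row and column. You assert that the top-left $2n\times 2n$ block is ``already orthogonal on its own'' because ``such block matrices are exactly the image of $U(n)\hookrightarrow SO(2n)$''. That is not right: matrices built out of $2\times 2$ blocks $\left(\begin{smallmatrix}a&b\\-b&a\end{smallmatrix}\right)$ are in bijection with \emph{all} of $M_n(\ccc)$, not just $U(n)$, so the block form by itself gives you no orthogonality whatsoever (for $n=2$ take all four blocks equal to $I_2$ --- block form, but singular). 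The actual mechanism is the one you allude to in your last sentence and which the paper carries out: for each $l$, rows $2l-1$ and $2l$ of the block portion are automatically orthogonal to each other and of equal Euclidean length, so the $SO(2n+1)$ conditions on the full rows force $z_{2l-1}z_{2l}=0$ and $z_{2l-1}^2=z_{2l}^2$, hence $z_{2l-1}=z_{2l}=0$; the same for the columns kills the $w_i$, and then $u=1$. That is the step to spell out, in place of the $U(n)$ claim. With that repair your proof coincides with the paper's.
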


\begin{proof}Fix $x \in \{a_1, \ldots, a_g, b_1, \ldots, b_g\}$ and consider the sections $\{\spijx \mid 1 \leq i, j, \leq n \}$. These sections all vanish when 
\[\rho(x)= \left( \begin{array}{cccccc} x_{11} & -y_{11} &  \cdots & x_{1n} & -y_{1n} & z_1\\ y_{11} & x_{11} &  \cdots & y_{1n} &x_{1n}& z_2\\ \vdots & \vdots & \ddots & \vdots & \vdots & \vdots \\x_{n1} & -y_{n1} & \cdots & x_{nn} & -y_{nn} & z_{2n-1} \\ y_{n1} & x_{n1} & \cdots & y_{nn} & x_{nn} & z_{2n} \\ w_1 & w_2 & \cdots & \cdots & w_{2n} & u \end{array}\right).
\] Since $\rho(x) \in SO(2n+1)$, we must have $z^2_{2l-1}=z^2_{2l}$, and $z_{2l-1}z_{2l}=0$, so $z_1 = \cdots = z_{2n} =0$. Similarly $w_1 = \cdots = w_{2n} = 0$, hence $u=1$, so $\rho(x)$ is in the form \eqref{plusmat} from Lemma \ref{genplus}. At points in $S_{n,g}(t)$ where all of our sections vanish, therefore, $\rho(x)$ is in this form for every $x \in \setgens$, so by Lemma \ref{genplus}, $\prod_{l=1}^g[\rho(a_l),\rho(b_l)]$ cannot be a generic torus element. However, $\prod_{l=1}^g[\rho(a_l),\rho(b_l)] = \rho(c) = t$ which was chosen to be a generic torus element. Thus there are no such points in $S_{n,g}(t)$, that is, the locus on which every one of these sections vanishes is empty. \end{proof}

In the coming discussion, we will make use of the following definition from our earlier paper \cite{aj}.

\begin{defn} 
\label{block}
Let $X$ be a finite set. A \emph{block} $B$ in $X\times X$ is a subset of $X\times X$ of the form $V \times V^c$, where $\varnothing \subsetneq V \subsetneq X$ is a proper nonempty subset of $X$. We denote the set of all blocks in $X \times X$ by $\mathcal{B}[X]$. If $B=V \times V^c$, let $\bar{B}=V^c\times V$. We will also make use of the indicator functions 
\[
\epsilon_B(i,j) := \begin{cases}
1 & (i,j) \in B\sqcup\bar{B}\\
-1 & \text{otherwise}
\end{cases}
\]
and \[
\epsilon_V(i) := \begin{cases}
1 & i \in V\\
-1 & \text{otherwise}
\end{cases}.
\]
\end{defn}

\begin{notn}
For a positive integer $m$, we will denote by $[m]$ the set $\{1, \ldots, m\}$. 
\end{notn}

For $l \in [n]$, let $E_l \in O(2n+1)$ be the matrix 
                
\bordermatrix{&&&&&2l&&\cr
				&1&&&&&\cr
				&&\ddots&&&&\cr
				&&&1&&\cr
				&&&&0&1&&&\cr
				2l&&&&1&0&\cr
				&&&&&&1\cr
				&&&&&&&\ddots\cr
				&&&&&&&&1\cr}                
(so conjugation by $E_l$ switches the $(2l-1)^{th}$ and $(2l)^{th}$ rows, and the $(2l-1)^{th}$ and $(2l)^{th}$ columns).

\begin{lem} 
\label{genminus} 
Let $B=V \times V^c \in \mathcal{B}[[n]]$. Suppose $M_l \in SO(2n+1)$ for each $1 \leq l \leq 2g$, and each $M_l$ has the form 
\begin{equation} \label{minusmat}
\left(\begin{array}{ccccccc}
&&&&&&0\\
\multicolumn{2}{c}{\smash{\raisebox{.5\normalbaselineskip}{$S^{11}_l$}}}&\multicolumn{2}{c}{\smash{\raisebox{.5\normalbaselineskip}{$\cdots$}}}&\multicolumn{2}{c}{\smash{\raisebox{.5\normalbaselineskip}{$S^{1n}_l$}}}&0\\
\multicolumn{2}{c}{\vdots}&\multicolumn{2}{c}{\ddots}&\multicolumn{2}{c}{\vdots}&\vdots\\
&&&&&&0\\
\multicolumn{2}{c}{\smash{\raisebox{.5\normalbaselineskip}{$S^{n1}_l$}}}&\multicolumn{2}{c}{\smash{\raisebox{.5\normalbaselineskip}{$\cdots$}}}&\multicolumn{2}{c}{\smash{\raisebox{.5\normalbaselineskip}{$S_l^{nn}$}}}&0\\

0&0&\multicolumn{2}{c}{{\cdots}}&0&0&1
\end{array}\right),
\end{equation} where $S^{ij}_l$ takes the form 
\[
\begin{cases} \left(\begin{array}{cc}x&-y \\ y & x \end{array}\right) & (i,j) \notin B \sqcup \bar{B}\\ \left( \begin{array}{cc} w& z \\ z & -w \end{array}\right) & \text{otherwise}.
\end{cases}
\] 
Then $\prod_{l=1}^g[M_{2l-1},M_{2l}]$ cannot be a generic torus element. 
\end{lem}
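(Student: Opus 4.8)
The plan is to reduce Lemma \ref{genminus} to Lemma \ref{genplus} by conjugating each $M_l$ by a suitable product of the $E_l$'s, exactly as was done in the $SO(5)$ case (Lemma \ref{fivepm}) but now with the bookkeeping governed by the block $B = V \times V^c$. First I would fix the subset $V \subsetneq [n]$ and set $E := \prod_{l \in V^c} E_l$ (the order does not matter since these commute, as they act on disjoint pairs of coordinates). Conjugation by $E_l$ swaps the rows $2l-1 \leftrightarrow 2l$ and the columns $2l-1 \leftrightarrow 2l$; so conjugation by $E$ performs this swap simultaneously on all the index pairs corresponding to $l \in V^c$.

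The key computation is to check the effect of this conjugation on each $2$-by-$2$ block $S^{ij}_l$ of $M_l$. Writing $N_l := E M_l E^{-1}$, the $(i,j)$ block of $N_l$ is obtained from $S^{ij}_l$ by swapping its two rows iff $i \in V^c$ and swapping its two columns iff $j \in V^c$. I would verify the following four cases: if $(i,j) \notin B \sqcup \bar B$, then either $i,j$ are both in $V$ or both in $V^c$; in the first case no swap occurs and the block stays $\left(\begin{smallmatrix} x & -y \\ y & x\end{smallmatrix}\right)$, and in the second case both a row-swap and a column-swap occur, which together send $\left(\begin{smallmatrix} x & -y \\ y & x\end{smallmatrix}\right)$ to $\left(\begin{smallmatrix} x & -y \\ y & x\end{smallmatrix}\right)$ again. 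If $(i,j) \in B \sqcup \bar B$, then exactly one of $i,j$ lies in $V^c$, so exactly one of a row-swap or a column-swap occurs, and this sends $\left(\begin{smallmatrix} w & z \\ z & -w\end{smallmatrix}\right)$ to $\left(\begin{smallmatrix} z & w \\ -w & z\end{smallmatrix}\right)$ (resp. $\left(\begin{smallmatrix} -z & w \\ w & z\end{smallmatrix}\right)$), both of which have the form $\left(\begin{smallmatrix} x & -y \\ y & x\end{smallmatrix}\right)$ appearing in Lemma \ref{genplus}. The last row and column of $M_l$ are fixed by the conjugation since $E$ is the identity there. Hence each $N_l$ has the form \eqref{plusmat}.

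By Lemma \ref{genplus}, $\prod_{l=1}^g [N_{2l-1}, N_{2l}]$ cannot be a generic torus element. Now suppose for contradiction that $t = \prod_{l=1}^g [M_{2l-1}, M_{2l}]$ for some such $M_l \in V_g(t)$; then, since conjugation is a homomorphism, $\prod_{l=1}^g[N_{2l-1},N_{2l}] = E t E^{-1}$. It remains to observe that $E t E^{-1}$ is again generic: conjugating the torus element $(\theta_1, \ldots, \theta_n)$ by $E = \prod_{l \in V^c} E_l$ sends it to $(\theta'_1, \ldots, \theta'_n)$ where $\theta'_l = -\theta_l$ for $l \in V^c$ and $\theta'_l = \theta_l$ otherwise (using $E_l R_{\theta_l} E_l^{-1} = R_{-\theta_l}$), and this sign change preserves the genericity condition of Definition \ref{generic}, since a nontrivial relation $\sum \lambda_l \theta'_l \in 2\pi\zzz$ with $\lambda_l \in \{0, \pm 1\}$ would give a nontrivial relation $\sum (\pm\lambda_l) \theta_l \in 2\pi\zzz$ of the same shape. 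So $EtE^{-1}$ is a generic torus element expressed as $\prod_{l=1}^g[N_{2l-1},N_{2l}]$ with the $N_l$ in the form \eqref{plusmat}, contradicting Lemma \ref{genplus}. Therefore $\prod_{l=1}^g[M_{2l-1},M_{2l}] \neq t$.

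The main obstacle is purely organisational rather than conceptual: one must be careful that the matrices $E_l$ for $l \in V^c$ genuinely commute and that conjugation by their product has the clean "swap rows for $i \in V^c$, swap columns for $j \in V^c$" description, so that the four-case block computation goes through uniformly; there is also a minor point that $E \in O(2n+1)$ rather than $SO(2n+1)$, but since we only use that conjugation is a group homomorphism of $GL(2n+1,\rrr)$ preserving $SO(2n+1)$, this causes no difficulty. Everything else is a direct transcription of the argument in Lemma \ref{fivepm}.
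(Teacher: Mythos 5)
Your proof is correct and takes essentially the same approach as the paper's, which sets $E = \prod_{k \in V} E_k$ (you use $V^c$; by the symmetry of $B \sqcup \bar{B}$ either choice turns each block of form \eqref{minusmat} into one of form \eqref{plusmat}), applies Lemma \ref{genplus}, and then notes that conjugation by $E$ sends a generic torus element to a generic torus element. One tiny arithmetic slip: a row swap sends $\left(\begin{smallmatrix} w & z \\ z & -w\end{smallmatrix}\right)$ to $\left(\begin{smallmatrix} z & -w \\ w & z\end{smallmatrix}\right)$, not to $\left(\begin{smallmatrix} -z & w \\ w & z\end{smallmatrix}\right)$ --- the latter has determinant $-(z^2+w^2)$ and is not of the form $\left(\begin{smallmatrix} x & -y \\ y & x\end{smallmatrix}\right)$ --- but the corrected matrix is of that form, so the argument is unaffected.
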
	

\begin{proof} Let $E= \prod_{k \in V} E_k$, and observe that if $M_l$ is in the form \eqref{minusmat} then $EME^{-1}$ is in the form \eqref{plusmat}. Thus $E\prod_{l=1}^g[M_{2l-1},M_{2l}]E^{-1}$ is not a generic torus element. Notice that if $h = ( \theta_1, \ldots, \theta_n)\in T$ is a generic torus element, then $EhE^{-1}= (\epsilon_V(1)\theta_1, \ldots, \epsilon_V(n) \theta_n)$ is also generic. Hence $\prod_{l=1}^g[M_{2l-1}, M_{2l}]$ is not a generic torus element.  
\end{proof}		

\begin{cor} Let $B \in \mathcal{B}[[n]]$. The sections 
\begin{equation*}
\{s^{-\epsilon_B(i,j)}_{ij}(x) \vert 1 \leq i, j \leq n, x = \genss\}
\end{equation*}have no common zeros. \end{cor}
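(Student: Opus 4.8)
The plan is to mimic the proof of the corollary to Lemma \ref{genplus}, using Lemma \ref{genminus} in place of Lemma \ref{genplus}. Fix $B = V \times V^c \in \mathcal{B}[[n]]$ and $x \in \{\genss\}$, and consider the sections $\{s^{-\epsilon_B(i,j)}_{ij}(x) \mid 1 \leq i, j \leq n\}$. First I would record what the vanishing of a single section $s^{\pm}_{ij}(x)$ says about the $(i,j)^{th}$ $2$-by-$2$ block of $\rho(x)$: if $s^+_{ij}(x)=0$ then that block has the form $\left(\begin{array}{cc}x'&-y'\\y'&x'\end{array}\right)$, and if $s^-_{ij}(x)=0$ then it has the form $\left(\begin{array}{cc}w'&z'\\z'&-w'\end{array}\right)$ (this is the observation recorded right after the definition of the sections in the $SO(5)$ section, and it generalises verbatim). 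So the vanishing of all the sections $s^{-\epsilon_B(i,j)}_{ij}(x)$ forces every $(i,j)$ block of $\rho(x)$ to be of the type prescribed in Lemma \ref{genminus}: the $\left(\begin{array}{cc}x'&-y'\\y'&x'\end{array}\right)$ type when $(i,j)\notin B \sqcup \bar B$ (where $-\epsilon_B(i,j)=-1$, i.e.\ we use the $+$ section), and the $\left(\begin{array}{cc}w'&z'\\z'&-w'\end{array}\right)$ type otherwise (where $-\epsilon_B(i,j)=+1$, i.e.\ we use the $-$ section).

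Next I would handle the last row and last column. The matrix $\rho(x)$ has the block form displayed above, with a final column $(b_1,\ldots,b_{2n},d)^T$ and final row $(c_1,\ldots,c_{2n},d)$. The orthogonality relations for $SO(2n+1)$ give, for each $l$, that the last two entries of the $(2l-1)^{th}$ and $(2l)^{th}$ rows satisfy $b_{2l-1}^2 = b_{2l}^2$ and $b_{2l-1}b_{2l}=0$ — this is exactly the computation made in the corollary to Lemma \ref{genplus}, and it does not care about the internal form of the blocks, only that the $2n\times 2n$ corner has orthonormal rows in pairs. Hence $b_1 = \cdots = b_{2n} = 0$, similarly $c_1 = \cdots = c_{2n} = 0$, and then $d^2 = 1$ together with $\det \rho(x) = 1$ forces $d = 1$. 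Therefore $\rho(x)$ is exactly in the form \eqref{minusmat} of Lemma \ref{genminus}.

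Finally I would conclude: suppose there were a point of $S_{n,g}(t)$ at which all the sections $\{s^{-\epsilon_B(i,j)}_{ij}(x) \mid 1\leq i,j\leq n, \ x=\genss\}$ vanish. By the above, at such a point $\rho(x)$ is in the form \eqref{minusmat} for every $x \in \{\genss\}$, so in particular $\rho(a_l)$ and $\rho(b_l)$ are, for $1 \leq l \leq g$. By Lemma \ref{genminus}, $\prod_{l=1}^g[\rho(a_l),\rho(b_l)]$ cannot be a generic torus element. But $\prod_{l=1}^g[\rho(a_l),\rho(b_l)] = \rho(c) = t$, which was chosen generic — a contradiction. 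Hence no such point exists and the sections have no common zeros.

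I do not expect a genuine obstacle here; the only point requiring a little care is bookkeeping the sign convention, namely checking that $-\epsilon_B(i,j) = -1$ picks out the $s^+$ section (whose vanishing gives the $\left(\begin{array}{cc}x'&-y'\\y'&x'\end{array}\right)$ block, matching the "$(i,j)\notin B\sqcup\bar B$" case of Lemma \ref{genminus}) and $-\epsilon_B(i,j) = +1$ picks out the $s^-$ section (matching the "otherwise" case). Once that is lined up correctly, the argument is a direct transcription of the $SO(5)$ corollaries and the corollary to Lemma \ref{genplus}.
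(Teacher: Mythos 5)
Your proof follows the paper's argument exactly: read off the $2\times 2$ block form forced by the vanishing of each section, use the $SO(2n+1)$ orthogonality relations to kill the last row and column, conclude $\rho(x)$ is of the form \eqref{minusmat}, and invoke Lemma \ref{genminus} to contradict genericity of $t = \rho(c)$. The one thing I'd flag is that in the very place you single out as "the only point requiring a little care," the sign bookkeeping for $-\epsilon_B(i,j)$ is written backwards: by Definition \ref{block}, $(i,j)\notin B\sqcup\bar B$ gives $\epsilon_B(i,j)=-1$, hence $-\epsilon_B(i,j)=+1$, and so the section in play is $s^+_{ij}(x)$; conversely $(i,j)\in B\sqcup\bar B$ gives $-\epsilon_B(i,j)=-1$ and the section $s^-_{ij}(x)$. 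You state "$-\epsilon_B(i,j)=-1$ picks out the $s^+$ section," which is internally inconsistent with the notation $s^{-\epsilon_B(i,j)}_{ij}$. Fortunately, your end-of-sentence conclusions (which section and which block form are associated to which region) are all correct, so the slip does not propagate; but the intermediate claim about the value of $-\epsilon_B(i,j)$ should be fixed, since as written it contradicts the very convention it purports to check.
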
	
\begin{proof}Consider the sections $\{s^{-\epsilon_B(i,j)}_{ij}(x) \vert 1 \leq i, j \leq n, x = \genss\}$. These sections all vanish when each
\[
\rho(x) = \left(\begin{array}{ccccccc}
&&&&&&z_1\\
\multicolumn{2}{c}{\smash{\raisebox{.5\normalbaselineskip}{$R^{11}_x$}}}&\multicolumn{2}{c}{\smash{\raisebox{.5\normalbaselineskip}{$\cdots$}}}&\multicolumn{2}{c}{\smash{\raisebox{.5\normalbaselineskip}{$R^{1n}_x$}}}&z_2\\
\multicolumn{2}{c}{\vdots}&\multicolumn{2}{c}{\ddots}&\multicolumn{2}{c}{\vdots}&\vdots\\
&&&&&&z_{2n-1}\\
\multicolumn{2}{c}{\smash{\raisebox{.5\normalbaselineskip}{$R^{n1}_x$}}}&\multicolumn{2}{c}{\smash{\raisebox{.5\normalbaselineskip}{$\cdots$}}}&\multicolumn{2}{c}{\smash{\raisebox{.5\normalbaselineskip}{$R_x^{nn}$}}}&z_{2n}\\

w_1&w_2&\multicolumn{2}{c}{{\cdots}}&w_{2n-1}&w_{2n}&u
\end{array}\right),
\] where $R^{ij}_x$ takes the form 
\[\begin{cases} \left(\begin{array}{cc}x& y \\ -y & x \end{array} \right) & \epsilon_B(i,j)=-1 \\ \left( \begin{array}{cc}z&w \\ w & -z \end{array} \right) & \epsilon_B(i,j) = 1
\end{cases}.
\] Since $\rho(x) \in SO(2n+1)$, this forces $z_1 = \cdots = z_{2n} = w_1 = \cdots = w_{2n} = 0$, and thus $u = 1$. So the sections we are considering all vanish when every $\rho(x)$ is of the form \eqref{minusmat} described in Lemma \ref{genminus}, in which case $\prod_{l=1}^g[\rho(a_l), \rho(b_l)]$ cannot be a generic torus element. But again, $\prod_{l=1}^g[\rho(a_l), \rho(b_l)]= \rho(c) = t$ was chosen to be generic, so these sections have no common zeros. 
\end{proof}

Now fix $x \in \setgens$, and let $C = U \times U^c \in \mathcal{B}[[n]]$ be another block (not necessarily distinct from $B$). Consider the sections 
\begin{equation}\label{msections}\{\spijx \vert \epsilon_B(i,j) = \epsilon_C(i,j) = -1 \} \cup \{\smijx \mid \epsilon_B(i,j) = 1, \epsilon_c(i,j)=-1\} \cup \{ \spijx, \smijx \mid (i,j) \in C\}.
\end{equation} If these sections all vanish, then 
\[
\rho(x) = \left(\begin{array}{ccccccc}
&&&&&&z_1\\
\multicolumn{2}{c}{\smash{\raisebox{.5\normalbaselineskip}{$T^{11}_x$}}}&\multicolumn{2}{c}{\smash{\raisebox{.5\normalbaselineskip}{$\cdots$}}}&\multicolumn{2}{c}{\smash{\raisebox{.5\normalbaselineskip}{$T^{1n}_x$}}}&z_2\\
\multicolumn{2}{c}{\vdots}&\multicolumn{2}{c}{\ddots}&\multicolumn{2}{c}{\vdots}&\vdots\\
&&&&&&z_{2n-1}\\
\multicolumn{2}{c}{\smash{\raisebox{.5\normalbaselineskip}{$T^{n1}_x$}}}&\multicolumn{2}{c}{\smash{\raisebox{.5\normalbaselineskip}{$\cdots$}}}&\multicolumn{2}{c}{\smash{\raisebox{.5\normalbaselineskip}{$T_x^{nn}$}}}&z_{2n}\\

w_1&w_2&\multicolumn{2}{c}{{\cdots}}&w_{2n-1}&w_{2n}&u
\end{array}\right),
\] where $T^{ij}_x$ takes the form
\[\begin{cases}  \left( \begin{array}{cc}x&-y \\ y & x \end{array}\right) & \epsilon_B(i,j)=\epsilon_C(i,j)=-1 \\ \left( \begin{array}{cc}z&w \\ w & -z \end{array}\right) & \epsilon_B(i,j)=1 \text{ and } \epsilon_C(i,j) = 1 \\ \left( \begin{array}{cc}0&0 \\ 0 & 0\end{array}\right) & \epsilon_C(i,j)=1
\end{cases}.
\] Since $\rho(x) \in SO(2n+1)$, this forces $z_{2i-1}=z_{2i}=0$ for all $i \in U$; thus $\rho(x)$ is block diagonal up to reordering of basis elements, so also $T^{ij}_x= \left( \begin{array}{cc}0&0 \\ 0 & 0\end{array}\right)$ for $(i,j)  \in \bar{C}$. Thus again, $z_1 = \cdots = z_{2n} = w_1 = \cdots = w_{2n} = 0$, and $u=1$. Observe that in particular, $\rho(x)$ is in the form \eqref{minusmat} described in Lemma \ref{genminus}, and further, that this form \eqref{minusmat} is independent of $C$. More generally, $\rho(x)$ takes the same form \eqref{minusmat} when the sections obtained from those in \eqref{msections} by replacing the block $C$ with a union of blocks all vanish. If we instead take $B$ to be the empty set, then $\rho(x)$ is in the form \eqref{plusmat} described in Lemma \ref{genplus}. This leads to the following lemma:

\begin{lem}
\label{withunions}
Let $C \in \mathcal{B}[[n]]\cup\{\varnothing \}$. For $ 1 \leq l \leq 2g$, let $D_l$ be a (possibly empty) union of blocks in $\mathcal{B}[[n]]$. Then the following collection of sections has no common zeros:
\begin{multline}\label{unionsections}
\bigcup_{l=1}^g \{s^+_{ij}(a_l), s^-_{ij}(a_l) \mid (i,j) \in D_l\} \cup \{s^{-\epsilon_C(i,j)}_{ij}(a_l) \mid (i,j) \notin D_l \cup \bar{D_l} \} \\ \cup \bigcup_{l=g+1}^{2g} \{s^+_{ij}(b_{l-g}),s^-_{ij}(b_{L_g}) \mid (i,j) \in D_l \} \cup \{s^{-\epsilon_C(i,j)}_{ij}(b_{l-g}) \mid (i,j) \notin D_l \cup \bar{D_l} \}.
\end{multline}
\end{lem}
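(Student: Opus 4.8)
\textbf{Proof proposal for Lemma \ref{withunions}.}

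The plan is to reduce this lemma to Lemma \ref{genplus} and Lemma \ref{genminus} by the same ``block-switching'' device already used in the corollaries above, but now applied generator-by-generator. First I would fix a point of $S_{n,g}(t)$ at which every section in \eqref{unionsections} vanishes, choose a lift $\rho \in V_g(t)$, and analyse each $\rho(a_l)$ (respectively $\rho(b_{l-g}$)) separately. For a fixed index $l$, the vanishing of the listed sections forces the $(i,j)$ block $T^{ij}$ of $\rho(a_l)$ to be of ``plus type'' $\left(\begin{smallmatrix} x & -y \\ y & x\end{smallmatrix}\right)$ when $(i,j)\notin D_l\cup\bar{D_l}$ and $\epsilon_C(i,j)=-1$, of ``minus type'' $\left(\begin{smallmatrix} w & z \\ z & -w\end{smallmatrix}\right)$ when $(i,j)\notin D_l\cup\bar{D_l}$ and $\epsilon_C(i,j)=1$, and zero when $(i,j)\notin D_l\cup\bar{D_l}$ and $(i,j)\in C$ (this last case being subsumed in the minus-type analysis, exactly as in the displayed computation preceding the lemma). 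On the remaining blocks, where $(i,j)\in D_l\cup\bar{D_l}$, no constraint is imposed on $T^{ij}$ beyond membership in $SO(2n+1)$.

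The key point is then the argument already rehearsed in the paragraph before the lemma statement: since $D_l$ is a union of blocks, say $D_l = \bigcup_k (V_k\times V_k^c)$, and $\rho(a_l)\in SO(2n+1)$, the orthogonality relations among rows and columns force the entries outside the ``union-of-blocks'' diagonal structure to vanish — concretely the last column entries $z_1,\dots,z_{2n}$ and last row entries $w_1,\dots,w_{2n}$ are zero and $u=1$, and moreover the off-diagonal blocks that would violate the block-diagonal decomposition must vanish. I would spell this out by observing that a union of blocks $D_l$ determines a partition of $[n]$ into the common refinement of the $\{V_k, V_k^c\}$, that $\rho(a_l)$ must be block-diagonal with respect to this partition (up to the reordering of basis vectors already used above), and hence $\rho(a_l)$ in fact has \emph{all} off-partition blocks zero. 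Consequently $\rho(a_l)$ is of the form \eqref{minusmat} associated to the single block $C$ (when $C\in\mathcal{B}[[n]]$) — because the ``plus-type'', ``minus-type'' and ``zero'' blocks, together with the unconstrained blocks which by the above are forced into the shape $\left(\begin{smallmatrix} x & -y \\ y & x\end{smallmatrix}\right)$ on $C^c$-positions and $\left(\begin{smallmatrix} w & z \\ z & -w\end{smallmatrix}\right)$ on $C$-positions after the block-diagonal reduction, are precisely the blocks allowed by \eqref{minusmat} for the block $C$ — or of the form \eqref{plusmat} when $C=\varnothing$. Crucially this form is independent of $l$ and of the individual $D_l$, which is what makes the common-vanishing locus amenable to a single application of Lemma \ref{genminus} (or Lemma \ref{genplus}).

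Having shown every $\rho(a_l)$ and every $\rho(b_{l-g})$ is simultaneously of the form \eqref{minusmat} for the fixed block $C$ (or \eqref{plusmat} if $C=\varnothing$), Lemma \ref{genminus} (resp.\ Lemma \ref{genplus}) applies to give that $\prod_{l=1}^g[\rho(a_l),\rho(b_l)]$ cannot be a generic torus element. But $\prod_{l=1}^g[\rho(a_l),\rho(b_l)] = \rho(c) = t$ is generic by hypothesis, a contradiction; hence no such point exists and the sections \eqref{unionsections} have no common zeros. The step I expect to be the main obstacle is the linear-algebra bookkeeping showing that the $SO(2n+1)$ constraint genuinely forces the unconstrained $D_l$-blocks into a block-diagonal arrangement whose blocks then automatically land in the shapes permitted by \eqref{minusmat} for $C$; this is the place where one must be careful that ``union of blocks'' (rather than a single block) does not introduce extra freedom, and that the reordering-of-basis argument used silently above really does reduce everything to the block-diagonal case. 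I would handle this by induction on the number of blocks in the union $D_l$, peeling off one block $V_k\times V_k^c$ at a time and using the orthogonality of the corresponding rows to kill the cross terms, exactly as in the $n=2$ computations of Section \ref{five}.
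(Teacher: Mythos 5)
Your overall strategy is correct and matches the paper's: at a common zero the matrices $\rho(x)$ must all be of the form \eqref{minusmat} for the single block $C$ (or \eqref{plusmat} when $C=\varnothing$), independently of $x$ and the $D_l$, and then Lemma \ref{genminus} (resp.\ Lemma \ref{genplus}) contradicts $\rho(c)=t$. The paper's own proof is in fact a one-liner that points back at the pre-lemma discussion for exactly this reduction.

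However, your description of what the sections actually enforce is wrong, and this is not a cosmetic slip. For $(i,j)\in D_l$ the collection \eqref{unionsections} contains \emph{both} $s^+_{ij}(x_l)$ and $s^-_{ij}(x_l)$, so vanishing forces the block $T^{ij}$ of $\rho(x_l)$ to be \emph{zero}; this is the source of the zero blocks, not, as you write, the indices $(i,j)\in C$ with $(i,j)\notin D_l\cup\bar{D_l}$ (for those indices only $s^{-\epsilon_C}_{ij}=s^-_{ij}$ is required to vanish, which yields a minus-type block, not a zero block). Conversely it is false that ``no constraint is imposed on $T^{ij}$'' when $(i,j)\in D_l\cup\bar{D_l}$: the constraint on $D_l$ is the strongest one in sight, and the genuinely unconstrained indices are exactly those in $\bar{D_l}\setminus D_l$. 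With the constraints read off correctly, your subsequent claim that the zero blocks plus orthogonality force $\rho(x_l)$ to be block-diagonal (up to reordering) with respect to the common refinement of the $\{V_k,V_k^c\}$ is the right conclusion, but it cannot be derived from the version you wrote down.

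The step you flag as the ``main obstacle'' is indeed where care is needed, and ``peeling off one block $V_k\times V_k^c$ at a time'' as stated does not quite work: for a fixed $i\in V_k$ the row pair $(2i-1,2i)$ can still contain unconstrained blocks $T^{ij}$ with $j\in V_k$ (namely $(i,j)\in\bar{D_l}\setminus D_l$), so the norm/orthogonality computation that kills $z_{2i-1},z_{2i}$ cannot be run for an arbitrary $V_k$. What one should peel off is a \emph{part} $P_a$ of the common refinement with the property that every $T^{ij}$ with $i\in P_a$, $j\notin P_a$ is already constrained; such a part always exists because the relation ``$(i,j)$ unconstrained for $i\in P_a,j\in P_b$'' is equivalent to $\{k:P_a\subset V_k\}\subsetneq\{k:P_b\subset V_k\}$, so the induced directed graph on parts is acyclic and has a sink. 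Granting that the paper itself only gestures at this (``More generally, $\rho(x)$ takes the same form \ldots''), your proposal is at comparable level of detail, but you should fix the misread constraints and replace ``peel one block'' by ``peel one sink part of the refinement''.
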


\begin{proof} As discussed above, if these sections all vanish then $\rho(x)$ has the form \eqref{minusmat} for every $x \in \setgens$. Thus $\prod_{l=1}^g[\rho(a_l), \rho(b_l)]$ cannot be a generic torus element by Lemma \ref{genminus}. But $\prod_{l=1}^g[\rho(a_l), \rho(b_l)]=\rho(c) = t$ was chosen to be a generic torus element, so these sections have no common zeros.
\end{proof}
\begin{rmk} \label{blockrmk}
In Lemma \ref{genminus}, we proved that if each $\rho(x)$ is of the form \eqref{minusmat}, then $\prod_{l=1}^g[\rho(a_l),\rho(b_l)]$ cannot be a generic torus element. Observe that if we have $2g$ block diagonal matrices $A_1, \ldots, A_g, B_1, \ldots, B_g$, where the first block in each is a $k$-by-$k$ matrix of the form \eqref{minusmat} 
$\left(\begin{array}{c|c}\eqref{minusmat}& 0 \\ \hline 0 & *\end{array}\right)$, then their product of commutators $\prod_{l=1}^g[A_l, B_l]$ also cannot be a generic torus element.
\end{rmk}

We will use this idea to find collections of sections with no common zeros by extending collections that worked for lower-rank cases. Note that in the example above, the matrices only need to be block diagonal up to reordering of the basis elements. Thus it will be useful to introduce the following notation:
\begin{notn} Let $G=SO(2n+1)$, let $X$ be a nonempty finite subset of $[n]$, let $B \in \mathcal{B}[X] \cup \{\varnothing\}$, and let $D$ be a $2g$-tuple of unions of blocks in $\mathcal{B}[X]$. Let $x_l= \begin{cases} a_l & 1 \leq l \leq g \\ b_{l-g} & g+1 \leq l \leq 2g \end{cases}$. Define the collection of sections 
\[Q_n(X,B,D):=\bigcup_{l=1}^{2g} \{s^+_{ij}(x_l),s^-_{ij}(x_l) \mid (i,j) \in D_l \} \cup \{s^{-\epsilon_B(i,j)}_{ij}(x_l) \mid (i,j) \notin D_l \cup \bar{D_l} \}.
\] \end{notn}
When the sections in $Q_n(X,B,D)$ all vanish, the $\vert X \vert$-by-$\vert X \vert$ submatrix of each $\rho(x)$ induced by considering only the rows and columns $2i-1$ and $2i$ for elements $i$ in $X$ must take the form \eqref{minusmat}. Lemma \ref{withunions} says that the sections in $Q_n([n], B, D)$ have no common zeros. 

\begin{defn} 
\label{recursiveAsets}
Consider sets $\mathcal{A}_n$, for $n \in \nnn$, defined recursively as follows: \begin{itemize} \item if $B\in \mathcal{B}[[n]]\cup \{\varnothing\}$ and $0 \leq r \leq 2g$, then $Q_n([n],B,D) \in \mathcal{A}_n$. \item if $X \sqcup Y = [n]$ is a partition, $\psi: [ \vert X \vert ] \to X$ is a bijection, and $P \in \mathcal{A}_{\vert X \vert}$, then $\psi_*P \cup \cup_{i \in X, j \in Y} \cup_{x \in \setgens} \spijx \smijx \in \mathcal{A}_n$.  \end{itemize}
\end{defn}

This induced map $\psi_*$ sends a section $s^\pm_{ij}(x)$ of the line bundle $\lpmij$ over $S_{\vert X \vert,g}(t)$ to the section $s^\pm_{\psi(i),\psi(j)}(x)$ of the line bundle $L^\pm_{\psi(i),\psi(j)}$ over $S_{n,g}(t)$.
\begin{prop} If $P \in \mathcal{A}_n$ is a collection of sections in the set just described, then the sections in $P$ have no common zeros. 
\end{prop}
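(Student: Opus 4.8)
The plan is to prove the proposition by induction on $n$, following the recursive structure of the definition of $\mathcal{A}_n$. The base case is immediate: if $P = Q_n([n],B,D)$ for some $B \in \mathcal{B}[[n]] \cup \{\varnothing\}$ and some $2g$-tuple $D$ of unions of blocks, then Lemma \ref{withunions} (equivalently, the statement that the sections in $Q_n([n],B,D)$ have no common zeros) already gives the conclusion, handling both the case $B \neq \varnothing$ via Lemma \ref{genminus} and the case $B = \varnothing$ via Lemma \ref{genplus}.

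For the inductive step, suppose $P = \psi_* P' \cup \bigcup_{i \in X, j \in Y}\bigcup_{x \in \setgens} \spijx \smijx$, where $X \sqcup Y = [n]$, $\psi : [\vert X\vert] \to X$ is a bijection, and $P' \in \mathcal{A}_{\vert X \vert}$. First I would examine what it means for all sections in $P$ to vanish at a point represented by some $\rho \in V_g(t)$. Vanishing of $\spijx$ and $\smijx$ for all $i \in X$, $j \in Y$, and all generators $x$ forces the $(i,j)^{th}$ $2$-by-$2$ block of every $\rho(x)$ to be zero for $i \in X, j \in Y$; since each $\rho(x) \in SO(2n+1)$ is orthogonal, the corresponding blocks with $i \in Y, j \in X$ must also vanish, and the last row/column entries $z_\bullet, w_\bullet$ in the rows/columns indexed by $X$ must vanish as well. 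Hence each $\rho(x)$ is block diagonal (up to the reordering of basis elements determined by $X$) with one block an $(\vert X\vert \times \vert X\vert)$-type submatrix $\rho^X(x)$ sitting in $SO(2\vert X\vert + 1)$ via the natural inclusion, and the complementary block.

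Next, the vanishing of the sections in $\psi_* P'$ translates, under the bijection $\psi$, precisely into the vanishing of the corresponding sections in $P'$ evaluated on the submatrices $\rho^X(x)$; by the inductive hypothesis applied to $P' \in \mathcal{A}_{\vert X\vert}$, these sections have no common zeros, which means the submatrices $\rho^X(x)$ (for $x$ running over the generators) cannot simultaneously be in the relevant form \eqref{minusmat} — more precisely, they force $\prod_{l=1}^g[\rho^X(a_l),\rho^X(b_l)]$ to fail to be a generic torus element of $SO(2\vert X\vert+1)$. Here is where I would invoke Remark \ref{blockrmk}: since each $\rho(x)$ is block diagonal with the $X$-block equal to $\rho^X(x)$, the product of commutators $\prod_{l=1}^g[\rho(a_l),\rho(b_l)]$ is block diagonal with $X$-block $\prod_{l=1}^g[\rho^X(a_l),\rho^X(b_l)]$, so if the latter is not a generic torus element of $SO(2\vert X\vert+1)$ then the former is not a generic torus element of $SO(2n+1)$. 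But $\prod_{l=1}^g[\rho(a_l),\rho(b_l)] = \rho(c) = t$ is generic by assumption — contradiction. Hence no such $\rho$ exists and the sections in $P$ have no common zeros.

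The main obstacle I anticipate is bookkeeping rather than conceptual: I need to be careful that the inductive hypothesis for $P' \in \mathcal{A}_{\vert X\vert}$ is stated in the right form to be applied to the submatrices $\rho^X(x)$. The cleanest way is to strengthen the induction slightly, proving simultaneously that whenever the sections in $P \in \mathcal{A}_n$ all vanish at $\rho$, each $\rho(x)$ lies in a block-diagonal form whose nontrivial block is of type \eqref{minusmat} (or \eqref{plusmat}), so that the genericity obstruction of Lemmas \ref{genplus}–\ref{genminus} and Remark \ref{blockrmk} applies directly. With that formulation the induction closes cleanly, the base case being exactly the content established before Lemma \ref{withunions}, and the recursive step being the block-diagonal argument above.
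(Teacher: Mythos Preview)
Your inductive strategy is natural, but the argument has a genuine gap at the step where you assert block-diagonality. You claim that once the sections $s^\pm_{ij}(x)$ for $i\in X,\ j\in Y$ vanish, orthogonality of $\rho(x)\in SO(2n+1)$ alone forces the $(Y,X)$-blocks and the last-column entries in the $X$-rows to vanish. This is false without using the constraints from $\psi_*P'$ on the $(X,X)$-block. For a concrete failure take $n=3$, $X=\{1,2\}$, $Y=\{3\}$, and let $P'\in\mathcal{A}_2$ itself be of the recursive type built from $X'=\{1\}$, $Y'=\{2\}$, $P''=\{s^+_{11}(x)\}$. Then
\[
P=\{s^+_{11}(x),\,s^\pm_{12}(x),\,s^\pm_{13}(x),\,s^\pm_{23}(x)\},
\]
and nothing in $P$ constrains the $(2,2)$-block. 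One checks that
\[
\rho(x)=\begin{pmatrix} R_\alpha & 0 & 0 & 0\\ 0 & \begin{smallmatrix}1&0\\0&0\end{smallmatrix} & 0 & \begin{smallmatrix}0\\1\end{smallmatrix}\\ 0 & * & * & *\\ 0 & * & * & *\end{pmatrix}\in SO(7)
\]
(with suitable $*$'s) satisfies all the vanishing conditions in $P$ yet is \emph{not} block-diagonal with respect to $X=\{1,2\}$: the entry in row $4$, column $7$ equals $1$. So you cannot form $\rho^X(x)\in SO(2|X|+1)$ as claimed, and the inductive hypothesis for $P'$ cannot be invoked in the way you describe.

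The paper avoids this by \emph{unwinding the recursion} rather than inducting on it: every $P\in\mathcal{A}_n$ contains a subcollection of the shape $Q_n(X_0,B,D)\cup\{s^\pm_{ij}(x):i\in X_0,\ j\notin X_0\}$ for some innermost $X_0\subseteq[n]$. The sections in $Q_n(X_0,B,D)$ force the $2|X_0|\times 2|X_0|$ block to have the \eqref{minusmat} pattern, and it is \emph{that} structure (not orthogonality alone) which, combined with the vanishing of the $(X_0,[n]\setminus X_0)$-blocks, forces the last-column entries $z_{2l-1},z_{2l}$ to vanish for $l\in X_0$ and hence block-diagonality at the $X_0$ level. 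Remark~\ref{blockrmk} then finishes. Your strengthened hypothesis is close to the right idea, but to make the induction close you must phrase it structurally---e.g.\ ``$P$ contains $Q_n(X_0,B,D)\cup\{s^\pm_{ij}(x):i\in X_0,\ j\notin X_0\}$ for some $X_0$''---and then argue block-diagonality at the $X_0$ level using orthogonality in $SO(2n+1)$, which is exactly what the paper does.
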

\begin{proof}An element $P \in \mathcal{A}_n$ takes the form 
\[P = Q_n(X, B, D) \cup \bigcup_{k=1}^d \bigcup_{\substack{i \in X \cup Y_1 \cup \cdots \cup Y_{k-1}\\ j \in Y_k}}\bigcup_{x \in \setgens}\spijx \smijx,
\] for some $0 \leq k \leq n$. Observe that this collection contains the collection of sections 
\[Q_n(X,B,D) \cup \bigcup_{\substack{i \in X \\ j \in Y_1 \cup \cdots \cup y_k}}\bigcup_{x \in \setgens}\spijx \smijx.
\]
When these sections all vanish, each $\rho(x)$ is (up to reordering) of the form 
\[\left(\begin{array}{ccc|ccccccc|c}
&&&&&&&&&&z_1\\
&A_x&&&&&0&&&&\vdots\\
&&&&&&&&&&z_{2\vert X \vert}\\
\hline 
&&&&&&&&&&\\
&&&&&&&&&&\\
&*&&&&&*&&&&*\\
&&&&&&&&&&\\
&&&&&&&&&&\\
\end{array}\right),
\] where each $A_x$ is of the form \eqref{minusmat}. But the form of $A_x$ forces $z_{2l-1}=z_{2l}=0$ for each $l \in X$, so each $\rho(x)$ is in fact block diagonal (up to reordering), of the form discussed in Remark \ref{blockrmk}. Hence these sections have no common zeros. 
\end{proof}
\begin{cor} \label{goodthingsvanish}Suppose 
\[ \bigcup_{\substack{1 \leq i,j \leq n \\ x \in \setgens}}(\spijx)^{k^+_{ij}(x)}(\smijx)^{k^-_{ij}(x)} \in \mathcal{A}_n,
\]where $k^+_{ij}(x)$ and $k^-_{ij}(x) \in \nnn$ for $1 \leq i, j \leq n$ and $x \in \setgens$, and $k^-_{ij}(x)=0$ whenever $i=j$. Then the cohomology class
\[\prod_{1 \leq i, j \leq n}c_1(\lpij)^{\sum_x k^+_{ij}(x)}c_1(\lmij)^{\sum_xk^-_{ij}(x)}
\] vanishes in $H^*(S_{n,g}(t);\qqq)$. \qed \end{cor}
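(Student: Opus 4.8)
The statement to prove is Corollary \ref{goodthingsvanish}: if the collection of sections
\[ \bigcup_{\substack{1 \leq i,j \leq n \\ x \in \setgens}}(\spijx)^{k^+_{ij}(x)}(\smijx)^{k^-_{ij}(x)} \]
lies in $\mathcal{A}_n$, then the corresponding product of Chern classes vanishes.

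The plan is to combine the immediately preceding proposition with the standard fact relating Chern classes and zero loci of sections. First I would invoke the Proposition just proved: since the displayed collection of sections is an element of $\mathcal{A}_n$, these sections have no common zero in $S_{n,g}(t)$. Here each $\spijx$ is a section of the line bundle $\lpij$ and each $\smijx$ is a section of $\lmij$, and the exponents $k^\pm_{ij}(x)$ count multiplicity (how many copies of a given section appear). Interpreting the union as a direct-sum section, we obtain a nowhere-vanishing section $\sigma$ of the bundle
\[ E := \bigoplus_{\substack{1 \leq i,j \leq n \\ x \in \setgens}} \left( (\lpij)^{\oplus k^+_{ij}(x)} \oplus (\lmij)^{\oplus k^-_{ij}(x)} \right), \]
where the summand $\lmij$ is omitted (or rather trivial) when $i=j$, consistent with the hypothesis $k^-_{ij}(x)=0$ for $i=j$.

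Next I would compute the top Chern class of $E$. Since $E$ is a direct sum of line bundles, its total Chern class is the product of the factors' total Chern classes, and in particular its top Chern class (the Euler class) is
\[ c_{\mathrm{top}}(E) = \prod_{\substack{1 \leq i,j \leq n \\ x \in \setgens}} c_1(\lpij)^{k^+_{ij}(x)} c_1(\lmij)^{k^-_{ij}(x)}. \]
Grouping the factors over $x$ and using $c_1(L_{\phi+\psi})=c_1(L_\phi)+c_1(L_\psi)$ only insofar as the exponents add — more simply, just collecting equal line bundles — this equals
\[ \prod_{1 \leq i, j \leq n}c_1(\lpij)^{\sum_x k^+_{ij}(x)}c_1(\lmij)^{\sum_xk^-_{ij}(x)}, \]
which is exactly the cohomology class in the statement. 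Finally, the existence of the nowhere-vanishing section $\sigma$ of $E$ forces $c_{\mathrm{top}}(E) = 0$ in $H^*(S_{n,g}(t);\qqq)$: the Euler class of a bundle admitting a nonvanishing section vanishes (one can see this by noting that such a section trivialises a line sub-bundle, so $E \cong E' \oplus \underline{\ccc}$, and $c_{\mathrm{top}}(E' \oplus \underline{\ccc}) = c_{\mathrm{top}}(E')\cdot c_1(\underline{\ccc}) = 0$). Combining the two computations gives the desired vanishing.

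There is essentially no obstacle here: the content is entirely carried by the Proposition preceding the corollary (that the sections in any element of $\mathcal{A}_n$ have no common zero), and the remainder is the routine translation from "no common zero of a tuple of sections" to "top Chern class of the direct-sum bundle vanishes." The only point requiring a word of care is bookkeeping: making sure the multiplicities $k^\pm_{ij}(x)$ in the formal product of sections are matched with the ranks of the summands in $E$, and that the $i=j$ convention ($k^-_{ij}(x)=0$) is respected so that no genuine constraint is being silently dropped. Hence the proof is short, and I would simply write: by the Proposition the sections have no common zero, so the direct sum bundle $E$ above has a nowhere-zero section, whence $c_{\mathrm{top}}(E)=0$; expanding $c_{\mathrm{top}}(E)$ as the product of the $c_1$ of its line-bundle summands yields the claimed relation. \qed
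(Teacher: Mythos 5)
Your proposal is correct and matches the paper's (implicit) argument exactly: the corollary is stated with \verb|\qed| in the source because the authors regard it as immediate from the preceding Proposition that sections in $\mathcal{A}_n$ have no common zero, combined with the standard fact that a nowhere-vanishing section of a direct sum of line bundles kills its top Chern class, i.e.\ the product of the $c_1$'s. Your bookkeeping of the multiplicities $k^\pm_{ij}(x)$ and the $i=j$ convention is also consistent with how the paper uses this result.
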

\section{The combinatorics} \label{heart}
So far we have found a class of ``good'' products of the $c_1(\lpmij)$ which vanish. The rest of this paper is devoted to proving that any product of at least $2gn^2 + \frac{1}{2}(n-1)(n-2)$ of the $c_1(\lpmij)$ is equivalent in $H^*(S_{n,g}(t);\qqq)$ to a combination of these ``good'' products, and hence also vanishes. 

The proof is combinatorial, and makes extensive use of the relations
\begin{align} \label{relns}
c^+_{ij} &= c^+_{ji} \\ 
c^-_{ij} &=-c^-_{ji} \\
c^+_{ij} -c^+_{jk}+c^-_{ki}&=0.
\end{align}
For the terms appearing in these relations to be defined, we must have fixed the rank $n$, but the same relations hold no matter which $n$ is chosen. In order to use inductive arguments we wish to be able to make statements that don't rely on having fixed the rank. We thus move away from considering the Chern classes themselves and instead study the combinatorial properties of another ring $R$ whose elements satisfy the same relations \eqref{relns} as our Chern classes. 

\begin{defn} 
Let $X$ be a finite subset of $\nnn$. Define the associated auxiliary sets \begin{itemize} 
\item $Y^+(X) := \{y^+_{ij} \mid i,j \in X\}$ \item $Y^-(X):= \{ y^-_{ij} \mid i, j \in X; i \neq j\}$ \item $Y(X):= Y^+(X) \sqcup Y^-(X)$,\end{itemize} as well as subsets $Y_z(X):=\{y^\pm_{ij}\in Y(X) \mid i = z \text{ or } j = z\}$ for each $x \in X$. 
If $B \in \mathcal{B}[X]$ is a block as defined in Definition \ref{block}, then we also introduce 
\begin{itemize} \item $Y^+_B(X):=\{y^{-\epsilon_B(i,j)}_{ij} \mid i, j\in X\}$ \item $Y^-_B(X):= \{y^{\epsilon_B(i,j)}_{ij} \mid i, j \in X ; i \neq j\}$
\end{itemize}
\end{defn}

We would like to form a quotient of $\qqq[Y(X)]$ by relations corresponding to those satisfied by the $c_1(\lpmij)$ listed above \eqref{relns}. 

\begin{defn}
Let $I \subset \qqq[Y(X)]$ be the ideal generated by the elements
\begin{itemize}
\item $\ymij + y^-_{ji}$ \item $\ypij- y^+_{ji}$ \item $\ymij + y^-_{jk} + y^-_{ki}$ \item $\ypij -y^+_{jk} + y^-_{ki}$
\end{itemize} for all triples of elements $i,j,k \in X$. Let $R:= \qqq[Y(X)]/I$ be the quotient of $\qyx$ by this ideal; if $p \in \qyx$ we will denote by $[p]$ its image in $R$.  Note that this quotient preserves the grading by degree of $\qyx$. 
\end{defn}

In the previous section we found sets $\mathcal{A}_n$ of collections of sections with no common zeros, and concluded that the corresponding products of Chern classes vanished in $H^*(S_{n,g}(t))$. We wish now to work in the ring $R$ and not in $H^*(S_{n,g}(t))$, so we introduce the map 
\begin{align*}
\alpha: \cup_n \{\text{sections } \spmijx \text{ of }\lpmij \to S_{n,g} \} & \to \qyx \\ \spmijx & \mapsto \ypmij.
\end{align*}
Note that $\alpha$ is a map of sets, not a ring homomorphism; we will use the same notation for the map that takes a set of sections to the product of their images in $\qyx$. Observe that $\alpha$ forgets both $n$ and $x$. 
The main goal of the remainder of this paper is to prove that for any monomial $q \in \qyx$ of degree at least $2gn^2+\frac{1}{2}(n-1)(n-2)$, there exist some $q_i \in \mathcal{A}_n$ and monomials $\theta_i \in \qyx$ such that $[q]=[\sum_i \theta_i \alpha(q_i)]$ in $R$. Recall that the sets $\mathcal{A}_n$ were defined recursively. We make this more explicit in the statement of the proposition:

\begin{prop} 
Let $X \subset \nnn$ with $\vert X \vert = n$. Let $p \in \qyx$ be a homogeneous polynomial of degree at least $2gn^2+\frac{1}{2}(n-1)(n-2)$. Then for each $B = V \times V^c \in \mathcal{B}[X]$ and $C \in \mathcal{B}[X]\cup \{\varnothing\}$, we can find:
\begin{itemize} \item a finite set $\mathcal{D}$ whose elements are $2g$-tuples of unions of blocks in $\mathcal{B}[X]$ \item homogeneous polynomials $\theta_B, \phi_B, \psi_C \in \qyx$, as well as polynomials $\chi_D$ for each $D \in \mathcal{D}$ \item elements $p_V \in \mathcal{A}_{\vert V \vert}$ and $p_{V^c} \in \mathcal{A}_{\vert V^c \vert}$, and \item bijections $f_V:[\vert V \vert] \to V$ and $f_{V^c}:[\vert V^c \vert ] \to V^c$ such that\end{itemize}
\begin{multline*}[p] = [\sum_{B=V\times V^c \in \mathcal{B}[X]}\prod_{(i,j) \in B}(\ypij \ymij)^{2g}(\theta_B\cdot\alpha((f_V)_*(p_V))+\phi_B\cdot \alpha((f_{V^c})_*(p_{V^c})))  \\ + \sum_{C \in \mathcal{B}[X] \cup \{\varnothing\}}\psi_C \sum_{D \in \mathcal{D}}\chi_D \prod_{l=1}^{2g}\left(\prod_{(i,j) \in D_l}\ypij \ymij\right)\left(\prod_{(i,j) \notin D_l \cup \bar{D_l}}y^{-\epsilon_C(i,j)}_{ij}\right)]
\end{multline*}
\end{prop}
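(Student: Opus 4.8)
The plan is to run an induction on $n = |X|$ that mirrors the recursive definition of $\mathcal{A}_n$, using the relations \eqref{relns} (equivalently, working modulo $I$ in $R$) to peel off factors one block at a time. The base case $n=1$ is essentially trivial: $Y^-(X) = \varnothing$, so a monomial of degree $\geq 2g$ in the single generator $y^+_{11}$ is a power $(y^+_{11})^{2g+k}$, which is $(y^+_{11})^{2g}$ times a monomial, and $(y^+_{11})^{2g} = \alpha$ of the collection $Q_1([1], B, D)$ with the empty block data — this realizes the desired form with all other terms zero. For the inductive step, fix a monomial $p$ of degree $\geq 2gn^2 + \tfrac12(n-1)(n-2)$.

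First I would use the relations to reduce $p$ to a normal form: modulo $I$, every $y^-_{ij}$ can be rewritten in terms of the $y^+$'s (since $y^-_{ki} = y^+_{ij} - y^+_{jk}$ gives $y^-_{ij}$ as a difference of $y^+$'s once we fix a pivot), so $[p]$ is a sum of monomials in the $y^+_{ij}$ alone — in fact, mimicking the $SO(5)$ argument, a sum of monomials in a spanning set of the $y^+_{ii}$. A pigeonhole/counting argument on the exponents then shows that, after possibly splitting $X = V \sqcup V^c$, either some ``diagonal block'' variable $y^+_{zz}$ (or the full set of block-$C$ variables) appears to power $\geq 2g$, OR the total degree forces the monomial to be divisible by $\prod_{(i,j)\in B}(y^+_{ij}y^-_{ij})^{2g}$ for some block $B = V\times V^c$. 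The degree bound $2gn^2 + \tfrac12(n-1)(n-2)$ is exactly what is needed to make this dichotomy go through: $2gn^2$ accounts for the $2g$ copies of each of the $n^2$ ``block'' variables one wants to extract, and $\tfrac12(n-1)(n-2)$ is the slack coming from the lower-rank contributions $2g|V|^2 + \tfrac12(|V|-1)(|V|-2)$ plus $2g|V^c|^2 + \cdots$ summing to less than the total.

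In the first branch (a diagonal/block factor of power $\geq 2g$ is present), $p$ has the shape $\prod_{(i,j)\in C}(y^+_{ij}y^-_{ij})$ — or more generally the $Q_n$-type product $\prod_{(i,j)\notin D_l\cup\bar D_l} y^{-\epsilon_C(i,j)}_{ij}$ times diagonal factors — times a remaining monomial of strictly smaller ``excess degree'', which by a secondary induction (on the number of blocks still needed, as in the $SO(5)$ Proposition) collapses into the $\sum_C \psi_C \sum_{D\in\mathcal D}\chi_D(\cdots)$ term. In the second branch, $p = \prod_{(i,j)\in B}(y^+_{ij}y^-_{ij})^{2g}\cdot \tilde p$ where $\tilde p$ splits (again using \eqref{relns} to separate variables indexed within $V$ from those within $V^c$, the cross terms $(i,j)\in B\sqcup\bar B$ having been absorbed) as a sum $\theta_B\cdot q_V + \phi_B\cdot q_{V^c}$ with $q_V$ a monomial purely in $Y(V)$ of degree $\geq 2g|V|^2 + \tfrac12(|V|-1)(|V|-2)$ (or the analogous statement on $V^c$); applying the inductive hypothesis to $q_V$ over the smaller set $V$, transporting along the bijection $f_V:[|V|]\to V$, and invoking the recursive clause of the definition of $\mathcal{A}_n$ (which glues $(f_V)_* p_V$ together with the cross-terms $\bigcup_{i\in V, j\in V^c}s^+_{ij}(x)s^-_{ij}(x)$) produces exactly the first sum in the claimed identity. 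Collecting over all blocks $B$ and all $C$ gives the stated formula.

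The main obstacle will be the bookkeeping in the splitting step: showing that, after the relation-reduction, the ``remaining'' monomial $\tilde p$ genuinely decomposes as $\theta_B\cdot(\text{monomial in }Y(V)) + \phi_B\cdot(\text{monomial in }Y(V^c))$ with the correct degree inherited by at least one side, and that the degree accounting $2g|V|^2 + 2g|V^c|^2 + n^2\cdot 2g$ (from the $B$-factor, noting $|B| = |V|\,|V^c|$ and $|V|^2 + |V^c|^2 + 2|V|\,|V^c| = n^2$) together with the error terms $\tfrac12(|V|-1)(|V|-2) + \tfrac12(|V^c|-1)(|V^c|-2) \leq \tfrac12(n-1)(n-2)$ closes the induction with no deficit. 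This is exactly the place where the precise constant in the degree bound is forced, and where the combinatorics of which block to extract (the choice that makes one of the two sides large enough) requires care; I expect this to occupy the bulk of the remaining sections.
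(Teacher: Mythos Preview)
Your high-level induction scheme and base case match the paper, and your second branch (extract a factor $\prod_{(i,j)\in B}(y^+_{ij}y^-_{ij})^{2g}$, split what remains over $V$ and $V^c$, apply the inductive hypothesis via Lemma~\ref{bozsqind}) is exactly right. The gap is in your proposed dichotomy and in what you call the ``first branch''. You assert that after reducing to $y^+$-variables a simple pigeonhole forces either a diagonal factor $(y^+_{zz})^{2g}$ or a full block-of-zeros factor $\prod_{(i,j)\in B}(y^+_{ij}y^-_{ij})^{2g}$. That alternative is not what one gets directly: a monomial of the required degree can, after reduction, have its weight spread so that no single block of zeros appears with exponent $2g$ and no single diagonal variable carries $2g$ either (think of degree concentrated on off-diagonal $y^+_{ij}$). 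What the paper actually obtains first is Lemma~\ref{monster}: a \emph{separate} induction on $|X|$ that outputs either a block of zeros \emph{or} a specific sign-pattern term $\prod_{(i,j)\in B}(y^-_{ij})^{2g}\prod_{(i,j)\notin B\cup\bar B,\,i<j}(y^+_{ij})^{2g}$. The proof of Lemma~\ref{monster} is where most of the combinatorial lemmas (\ref{oddsomez}, \ref{indind}, \ref{indoroct}, \ref{cd1sbextends}--\ref{restrictblock}, \ref{zroworoct}) are consumed, and crucially it imports the $SU(n)$ block-extraction result (Lemma~\ref{propfromaj}/Corollary~\ref{ajpropflipblock}) from the earlier paper; you have no analogue of this step.

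Second, your ``first branch'' is where the remaining difficulty sits, not mere bookkeeping. From the sign-pattern terms coming out of Lemma~\ref{monster} one still has to manufacture the $\prod_{i}(y^+_{ii})^{2g}$ factor (Lemmas~\ref{2goroct} and \ref{octandonediag}) and then run the terminating procedure of Lemma~\ref{procedure} to convert the residual monomial into the $\sum_{D\in\mathcal D}$ shape with $2g$-tuples of unions of blocks. Your sketch (``collapses into the $\sum_C\psi_C\sum_D\chi_D(\cdots)$ term by a secondary induction'') does not supply the mechanism by which the $D_l$'s get built up; in the paper this is an explicit algorithm whose termination is argued via a monovariant on $(\deg(\alpha\beta),\, d,\, b)$. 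Until you have a replacement for Lemma~\ref{monster} and for the procedure of Lemma~\ref{procedure}, the induction does not close.
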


There are several points during the course of the proof of this proposition when we apply the pigeonhole principle, generally in order to show that the restriction of a polynomial to some subring of $\qyx$ has high enough degree to apply an inductive hypothesis. In order to reduce clutter we collect the relevant calculations into the following five lemmas. 

\begin{lem} \label{oddsomez} Let $X$ be a finite subset of $\nnn$ with $\vert X \vert = m > 3$, and let $p \in \qyx$ be a monomial of degree at least $2gm(m-1)-m+1$. Then there exists some $z \in X$ such that if we factorise $p$ as $p=q_zr_z$, where $q_z \in \qyxmz$ and $r_z \in \qyzx$ are monomials, then $q_z$ has degree at least $2g(m-1)(m-2)-m+2$. 
\end{lem}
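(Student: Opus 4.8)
The plan is to prove this purely by a pigeonhole argument on the index $z$. First I would record the structural fact that makes the factorisation $p=q_zr_z$ well defined: for each $z\in X$ the generating set partitions as $Y(X)=Y_z(X)\sqcup Y(X\setminus\{z\})$, since a generator $\ypmij$ lies in $Y_z(X)$ exactly when $z\in\{i,j\}$ and lies in $Y(X\setminus\{z\})$ otherwise. Hence a monomial $p$ factors uniquely as $p=q_zr_z$ with $q_z\in\mathbb{Q}[Y(X\setminus\{z\})]$ the product of the powers of the generators not involving $z$ and $r_z\in\qyzx$ the product of the rest, and $\deg q_z+\deg r_z=\deg p$. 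So it suffices to exhibit some $z$ for which $\deg r_z$ is small, namely $\deg r_z\le\deg p-\bigl(2g(m-1)(m-2)-m+2\bigr)$.

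To find such a $z$, I would bound the total $\sum_{z\in X}\deg r_z$ from above. If a generator $\ypmij$ occurs in $p$ with exponent $e$, it contributes $e$ to $\deg r_z$ for $z=i$ and for $z=j$, and for no other $z$; when $i=j$ (which can occur only for $y^+_{ii}$) it contributes $e$ only to $\deg r_i$. Thus each generator is counted at most twice, giving $\sum_{z\in X}\deg r_z\le 2\deg p$. Since $|X|=m$, averaging yields some $z\in X$ with $\deg r_z\le\tfrac{2}{m}\deg p$, whence
\[
\deg q_z=\deg p-\deg r_z\ge\Bigl(1-\tfrac{2}{m}\Bigr)\deg p=\frac{m-2}{m}\,\deg p .
\]

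It then remains to plug in the degree hypothesis. Since $m>3$ we have $m-2>0$, and since $\deg p\ge 2gm(m-1)-m+1\ge 2gm(m-1)-m=m\bigl(2g(m-1)-1\bigr)$, we obtain
\[
\deg q_z\ge\frac{m-2}{m}\,\deg p\ge\frac{m-2}{m}\cdot m\bigl(2g(m-1)-1\bigr)=(m-2)\bigl(2g(m-1)-1\bigr)=2g(m-1)(m-2)-m+2 ,
\]
which is exactly the claimed bound. There is no substantive obstacle here; the only points requiring care are the bookkeeping in $\sum_z\deg r_z$ — the diagonal generators $y^+_{ii}$ being the harmless case counted once rather than twice — and checking that the constants in the two degree estimates are arranged so that the pigeonhole factor $\tfrac{m-2}{m}$ lands exactly on target, which is why the hypothesis is stated with the precise bound $2gm(m-1)-m+1$.
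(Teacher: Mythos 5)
Your proof is correct and is essentially the same pigeonhole argument as the paper's: the paper bounds $\sum_z \deg q_z \geq (m-2)\deg p$ (by counting how many of the $q_z$ each generator appears in) and derives a contradiction, which is the same bookkeeping as your $\sum_z \deg r_z \leq 2\deg p$ since $\deg q_z + \deg r_z = \deg p$. Your presentation via averaging $\deg r_z$ is a clean way to phrase it, and your arithmetic lands exactly on the stated bound.
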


\begin{proof} Write 
\[p=\lambda \prod_{i,j \in X }(\ypij)^{d^+_{ij}}(\ymij)^{d^-_{ij}}\]
(where $d^-_{ii}=0$ for all $i$). Given $z \in X$, 
\[q_z = \prod_{i,j \in X \setminus \{z\} }(\ypij)^{d^+_{ij}}(\ymij)^{d^-_{ij}}.
\]
Note that each factor $\ypij$ of $p$ appears in $q_z$ precisely when $i, j \neq z$, and thus appears in at least $m-2$ of the $q_z$ as $z$ ranges over $X$ (exactly $m-2$ except for the $y^+_{ii}$ which appear in $m-1$). Hence $\prod_{z=1}^mq_z=p^{m-2}\prod_{i \in X \setminus \{z\}}(y^+_{ii})^{d^+_{ii}}$ has degree $\geq (m-2)(2gm(m-1)-m+1)$. Suppose by way of contradiction that each $q_z$ has degree at most $2g(m-1)(m-2)-m+1$. Then $\prod_{z=1}^mq_z$ has degree at most 
\begin{align*} m(2g(m-1)(m-2)-m+1) & = 2gm(m-1)(m-2)-m+1\\ &< (m-2) 2gm(m-1)-(m-2)(m-1) \\&=(m-2)(2g(m-1)-m+1).
\end{align*} This is a contradiction, so the desired $z$ must exist. 
\end{proof}

\begin{lem} \label{indind}
Let $X = \sehfwz$ be a subset of $\nnn$ with $\vert X \vert = m$. Let $p \in \qyzx$ be a monomial of degree at least $2gm(m-1)-m+1-4gwh$ that factorises as $p = p_h p_w$, where $p_h \in \qyehz$ and $p_w \in \qyfwz$ are monomials. Then either $\deg p_h \geq 2gh(h+1)-h$, or $\deg p_w \geq 2gw(w+1)-w $. 
\end{lem}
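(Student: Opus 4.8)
The plan is to argue by contradiction using a counting argument analogous to the one in Lemma \ref{oddsomez}. First I would record the basic bookkeeping: since $X = \sehfwz$, the variable set $Y_z(X)$ splits as $Y_z(X) = Y_z(\sehz) \cup Y_z(\sfwz)$ — every generator $y^\pm_{ij}$ with $i=z$ or $j=z$ has its other index in either $\seh$ or $\sfw$ (the case $i=j=z$ contributing to the $\qyehz$ part, say, by convention). Consequently the factorisation $p = p_h p_w$ with $p_h \in \qyehz$ and $p_w \in \qyfwz$ exists and is essentially forced once we decide where to put the $z,z$ term; I would simply take it as given in the hypothesis. The degrees then add: $\deg p = \deg p_h + \deg p_w$.

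Next I would compare the hypothesised lower bound on $\deg p$ with the sum of the two \emph{negated} target bounds. Assume for contradiction that $\deg p_h \leq 2gh(h+1)-h-1$ and $\deg p_w \leq 2gw(w+1)-w-1$. Adding these gives
\[
\deg p \;\leq\; 2gh(h+1) + 2gw(w+1) - h - w - 2.
\]
The crux is then the purely numerical inequality
\[
2gh(h+1) + 2gw(w+1) - h - w - 2 \;<\; 2gm(m-1) - m + 1 - 4gwh,
\]
where $m = h + w + 1$ (note $z$ contributes the ``$+1$''). I expect this to be the main — indeed the only real — obstacle, and it is a finite computation: expanding $2gm(m-1) = 2g(h+w+1)(h+w) = 2g(h^2 + w^2 + 2hw + h + w)$ and subtracting $4gwh$ leaves $2g(h^2+w^2+h+w-2hw) = 2g\big((h-w)^2 + h + w\big)$ on the relevant side, while the left side contributes $2g(h^2+h+w^2+w)$; the $g$-terms therefore match up to the favourable surplus $2g(h-w)^2 \geq 0$, and one is left checking that the linear-in-$m$ slack ($-m+1$ versus $-h-w-2$, i.e. $-(h+w)$ versus $-(h+w)-1$) makes the inequality strict. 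I would present this as a short displayed chain of (in)equalities, taking care that the $g$-coefficients cancel cleanly and that the strictness comes from the constant terms (and is not lost even when $h = w$).

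Finally, the contradiction: the two bounds force $\deg p < 2gm(m-1)-m+1-4gwh$, contradicting the hypothesis $\deg p \geq 2gm(m-1)-m+1-4gwh$. Hence at least one of $\deg p_h \geq 2gh(h+1)-h$ or $\deg p_w \geq 2gw(w+1)-w$ must hold. I would keep the exposition parallel to Lemma \ref{oddsomez} for readability, and flag explicitly the identity $m-1 = h+w$ since it is the substitution that makes the arithmetic transparent.
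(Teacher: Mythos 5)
Your overall strategy is correct and is essentially the same calculation as the paper's, merely phrased contrapositively: the paper assumes $\deg p_h \leq 2gh(h+1)-h-1$ and derives $\deg p_w \geq 2gw(w+1)-w+1$ directly, whereas you assume both bounds fail and add. Both come down to the same arithmetic with $m-1 = h+w$, and you correctly identify this substitution as the key.

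However, the expansion you sketch contains a sign slip that would derail the write-up if carried through literally. With $m = h+w+1$,
\[
2gm(m-1) - 4gwh \;=\; 2g(h^2 + 2hw + w^2 + h + w) - 4gwh \;=\; 2g(h^2 + w^2 + h + w),
\]
\emph{not} $2g\big((h-w)^2 + h + w\big)$ as you claim; it appears you subtracted $4gwh$ twice. Consequently there is no ``favourable surplus $2g(h-w)^2$'': the $g$-parts of the two sides cancel \emph{exactly}, since the left-hand $g$-part is $2gh(h+1)+2gw(w+1) = 2g(h^2+w^2+h+w)$ as well. This matters: if your intermediate expression were right, the claimed inequality would require $4ghw < 2$, which fails for $g \geq 2$, $h,w\geq 1$. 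After correcting the expansion, the inequality reduces, as you anticipate, to the constant terms alone, namely $-h-w-2$ (left) versus $-m+1 = -h-w$ (right), a strict gap of $2$ (not the gap of $1$ you quote). So the plan is sound and mirrors the paper's one-line estimate, but the displayed algebra needs to be redone before it constitutes a proof.
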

\begin{proof}Suppose $\deg p_h \leq 2gh(h+1)-h-1$. Then 
\begin{align*} \deg p_w &\geq 2gm(m-1)-m+1-4gwh-2gh(h+1)+h+1 \\ &= 2gm(w+h)-(w+h)-4gwh -2gh(m-w)+(m-w) \\ &=2gw(m+h)-4gwh-w-h+m-w \\&=2gw(w+h+1+h)-4gwh-w+1 \\&= 2gw(w+1)-w+1 > 2gw(w+1)-w.\qedhere
\end{align*} 
\end{proof}

\begin{lem} \label{bozsqind} Let $X = H \sqcup W$ be a finite subset of $\nnn$ with $\vert H \vert = h>0$, $\vert W \vert = w>0$, and $\vert X \vert = w + h = n$. Let $p \in \qyx$ be a monomial of degree at least $2gn^2+ \frac{1}{2}(n-1)(n-2)-4gwh$ that factorises as $p = p_w p_h$, where $p_w \in \qqq[Y(W)]$ and $p_h \in \qqq[Y(H)]$. Then either $\deg p_w \geq 2gw^2+ \frac{1}{2}(w-1)(w-2)$ or $\deg p_h \geq 2gh^2+ \frac{1}{2}(h-1)(h-2)$. 
\end{lem}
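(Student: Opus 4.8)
### Proof plan

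The statement is a pure pigeonhole estimate: a monomial $p = p_w p_h$ of degree at least $2gn^2 + \tfrac12(n-1)(n-2) - 4gwh$ must push enough degree into one of the two factors. The plan is to argue by contradiction. Suppose both $\deg p_w \le 2gw^2 + \tfrac12(w-1)(w-2) - 1$ and $\deg p_h \le 2gh^2 + \tfrac12(h-1)(h-2) - 1$. Adding these and using $\deg p = \deg p_w + \deg p_h$ would give
\[
2gn^2 + \tfrac12(n-1)(n-2) - 4gwh \;\le\; 2gw^2 + 2gh^2 + \tfrac12(w-1)(w-2) + \tfrac12(h-1)(h-2) - 2,
\]
and the goal is to show this inequality is false, i.e. the left side strictly exceeds the right.

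First I would handle the quadratic-in-$g$ terms: since $n = w + h$, we have $n^2 = w^2 + h^2 + 2wh$, so $2gn^2 - 4gwh - 2gw^2 - 2gh^2 = 0$. These terms cancel exactly, which is the whole point of the $-4gwh$ correction in the hypothesis. What remains is the purely combinatorial (genus-free) comparison
\[
\tfrac12(n-1)(n-2) \;\overset{?}{>}\; \tfrac12(w-1)(w-2) + \tfrac12(h-1)(h-2) - 2.
\]
Multiplying by $2$ and expanding $(n-1)(n-2) = n^2 - 3n + 2$ with $n = w+h$, this reduces to checking
\[
(w+h)^2 - 3(w+h) + 2 \;\ge\; (w^2 - 3w + 2) + (h^2 - 3h + 2) - 4 + 1,
\]
i.e. $2wh \ge -1$, which holds since $w, h \ge 0$ (indeed with room to spare). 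So the assumed inequality fails, giving the contradiction.

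The only subtlety — and the part I would be most careful about — is the bookkeeping of the $-1$'s and the boundary cases $w \le 2$ or $h \le 2$, where a factor like $\tfrac12(w-1)(w-2)$ can be zero or where "$\deg p_w \le 2gw^2 + \tfrac12(w-1)(w-2) - 1$" might need $\deg p_w \ge 0$ to be consistent. In those degenerate cases the bound $2gw^2 + \tfrac12(w-1)(w-2)$ can be small, but the inequality $2wh \ge -1$ is so slack that the argument survives; I would simply note that the computation above is valid for all $w, h \ge 0$ with $w + h = n$, since nowhere did I divide by $w$ or $h$ or assume either is positive. One should also keep in mind that $\qqq[Y(W)]$ and $\qqq[Y(H)]$ share no variables (as $H \sqcup W$ is a disjoint union), so the factorisation $p = p_w p_h$ is genuine and degrees add; this is what licenses $\deg p = \deg p_w + \deg p_h$, and it is the one structural (non-arithmetic) input. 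I expect no real obstacle here — this lemma is entirely a degree count, with the cancellation of the $2g$-terms being the mechanism that makes it work.
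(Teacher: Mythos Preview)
Your proof is correct and is essentially the same as the paper's: the paper assumes $\deg p_h \le 2gh^2 + \tfrac12(h-1)(h-2) - 1$ and directly computes $\deg p_w \ge 2gw^2 + \tfrac12(w-1)(w-2) + wh$, which is your contradiction argument unwound into a direct implication. The same cancellation $2gn^2 - 4gwh = 2g(w^2+h^2)$ and the same quadratic expansion of $\tfrac12(n-1)(n-2)$ drive both computations.
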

\begin{proof} Suppose $\deg p_h \leq 2gh^2 + \frac{1}{2}(h-1)(h-2)-1$. Then 
\begin{align*}\deg p_w &\geq 2gn^2+ \frac{1}{2}(n-1)(n-2)-4gwh-2gh^2-\frac{1}{2}(h-1)(h-2) +1 \\ &= 2g(h^2+2wh + w^2) + \frac{1}{2}(w+h-1)(w+h-2)-4gwh-2gh^2-\frac{1}{2}(h-1)(h-2)+1 \\ &=2gw^2+ \frac{1}{2}(h-1)(h-2)+ \frac{1}{2}w(h-2)+ \frac{1}{2}w(h-1)+\frac{w^2}{2}-\frac{1}{2}(h-1)(h-2)+1 \\
&= 2gw^2+ \frac{1}{2}(w^2-2w-w+2)+wh \\ &= 2gw^2 + \frac{1}{2} (w-1)(w-2)+wh > 2gw^2+ \frac{1}{2}(w-1)(w-2). \qedhere
\end{align*}
\end{proof}

\begin{lem} \label{2goroct} Let $p=qr$ be a monomial of degree at least $2gn^2+ \frac{1}{2}(n-1)(n-2)-n(n-1)g-2g(n-1)$. Then either $\deg q \geq 2g$ or $\deg r \geq n(n-1)g-n+2$. 
\end{lem}
\begin{proof} Suppose $\deg r \leq n(n-1)g-n+1$. Then 
\begin{align*} \deg q & \geq 2gn^2 + \frac{1}{2}(n-1)(n-2)-n(n-1)g-2g(n-1)-n(n-1)g+n-1 \\ &=2gn^2 -2gn(n-1)-2g(n-1)+ \frac{1}{2}(n-1)(n-2)+n-1 \\ &=2g + \frac{1}{2}n(n-1) \geq 2g. \qedhere
\end{align*}
\end{proof}

\begin{lem} \label{indoroct} Let $w, h \in \nnn$ with $w+h=m-1$, and let $p=qr$ be a monomial of degree at least $2gm(m-1)-m+1-4gwh-h(h+1)g$. Then either $\deg q \geq 2gw(w+1)-w$ or $\deg r \geq h(h+1)g-(h+1)+2$. 
\end{lem}
\begin{proof} Suppose $\deg r \leq h(h+1)g-(h+1)+1$. Then \begin{align*} \deg q & \geq 2gm(m-1)-m+1-4gwh-h(h+1)g-h(h+1)g+(h+1)-1 \\ &= 2g(w+h+1)(w+h)-(w+h)-4gwh-2gh(h+1)+h \\ &= 2gw^2 + 2gh(h+1) + 2gw -w -2gh(h+1) \\ &= 2gw(w+1)-w . \qedhere
\end{align*}
\end{proof}

We will also need the following results about interactions between different blocks. 

\begin{lem} \label{cd1sbextends} Suppose $X = \{e_1, \ldots, e_h, f_1, \ldots, f_w, z\}$. Let $B \in \bbb[X\setminus \{z\}]$ be the block $B = \seh \times \sfw$, and let $C$ be a block in $\bbb[\{\ffw,z\}]$. Then the union $B \cup C \cup \bar{C}$ contains a block $D \in \bx$. 
\end{lem}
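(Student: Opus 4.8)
The statement asserts that given the partition $X \setminus \{z\} = \seh \sqcup \sfw$ with associated block $B = \seh \times \sfw \in \bbb[X \setminus \{z\}]$, and given an arbitrary block $C \in \bbb[\{\ffw, z\}]$, the symmetric union $B \cup C \cup \bar{C}$ contains some block $D = W \times W^c \in \bx$. The plan is to produce $W$ explicitly from the data of $C$. Write $C = U \times U^c$ where $U$ is a proper nonempty subset of $\{\ffw, z\}$; by passing to $\bar C$ if necessary we may assume $z \notin U$, so $U \subseteq \sfw$ and $z \in U^c$ (the complement being taken inside $\{\ffw, z\}$). I would then try the candidate $W = \seh \cup U$, a proper nonempty subset of $X$, and check that $W \times W^c \subseteq B \cup C \cup \bar C$.

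The verification splits according to where the two coordinates of a pair $(i,j) \in W \times W^c$ lie. Note $W^c = (\sfw \setminus U) \cup \{z\}$ inside $X$. The first coordinate $i$ is either in $\seh$ or in $U$; the second coordinate $j$ is either in $\sfw \setminus U$ or equals $z$. So there are four cases. If $i \in \seh$ and $j \in \sfw \setminus U \subseteq \sfw$, then $(i,j) \in \seh \times \sfw = B$. If $i \in \seh$ and $j = z$: here I would want $(i, z) \in B$, which requires $z$ to be regarded as an element on the $\sfw$ side — this is the case I expect to be the main obstacle, since $z \notin \sfw$, so $(i,z)$ is not literally in $B = \seh \times \sfw$, and $z$ doesn't appear in $C$ or $\bar C$ in a way pairing with $e$'s either. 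I would resolve this by choosing the candidate $W$ more carefully: take instead $W = \seh \cup U$ but observe we are free to instead use $W = \sfw \setminus U$ together with possibly $z$ — i.e. genuinely exploit the freedom in $C$ vs $\bar C$ and in which of $B, \bar B$ we use, trying all of $W \in \{\seh \cup U,\ U^c\text{-type sets}\}$ until the pair $(e_i, z)$ lands in the union; concretely, if $z \in W$ as well then the problematic pairs $(e_i, z)$ disappear from $W \times W^c$, so the right move is likely $W = \seh \cup U \cup \{z\}$ when $z$'s placement in $C$ forces it, and $W = \seh \cup U$ otherwise, selecting between $C$ and $\bar C$ to make this consistent.

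Once the correct $W$ is pinned down, the remaining cases are routine: a pair with $i \in U$, $j \in \sfw \setminus U$ lies in $U \times U^c = C$ (all these elements live in $\{\ffw,z\}$ where $C$ is defined); a pair with $i \in U$, $j = z$ lies in $C$ as well since $z \in U^c$ within $\{\ffw,z\}$; and any pair with $i \in \seh$ is handled by $B$ as above. I would also need to confirm $W$ is a proper nonempty subset of $X$ so that $D := W \times W^c$ is genuinely a block in $\bx$: nonempty because $\seh \neq \varnothing$ (as $h \geq 1$; if $h = 0$ the statement degenerates and $B \cup C \cup \bar C \supseteq C$ already), and proper because $U \subsetneq \{\ffw, z\}$ guarantees $W^c \neq \varnothing$. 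The main work, then, is the careful case-by-case bookkeeping in the second paragraph to choose $W$ so that the ``mixed'' pairs involving $z$ and an $e_i$ are absorbed — everything else follows by unwinding the definitions of $B$, $C$, and $\bar C$.
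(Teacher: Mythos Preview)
Your instinct is right, and you have identified a genuine difficulty that the paper's own proof glosses over. The paper takes (after its WLOG) $C = \{f_1,\ldots,f_d\} \times \{f_{d+1},\ldots,f_w,z\}$ and asserts that $D = \{e_1,\ldots,e_h,f_1,\ldots,f_d\} \times \{f_{d+1},\ldots,f_w,z\}$ is contained in $B \cup C \cup \bar C$; but the pairs $(e_k, z)$ you flag lie in none of $B$, $C$, $\bar C$, so the paper's $D$ does not work as written.

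Your proposed repair $W = \seh \cup U \cup \{z\}$ (under your convention $z \notin U$) fails for the same reason, however: now $W^c = \sfw \setminus U$, so $W\times W^c$ contains pairs $(z, f_l)$ with $f_l \in \sfw \setminus U$. Since $z \notin \seh$ these are not in $B$, and since both $z$ and $f_l$ lie in $U^c = (\sfw \setminus U) \cup \{z\}$, the pair $(z,f_l)$ is in neither $C = U \times U^c$ nor $\bar C = U^c \times U$. You have traded one bad family of pairs for another.

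The clean fix is the one you gesture at (``selecting between $C$ and $\bar C$''), but carried out in the opposite direction from your initial WLOG: arrange that $z$ lies in the \emph{first} factor of $C$, i.e.\ write $C = U \times U^c$ with $z \in U$. Then take $W = \seh \cup U$, so that $W^c = \sfwz \setminus U = U^c \subseteq \sfw$. Every pair $(i,j) \in W \times W^c$ now has $j \in \sfw$: if $i \in \seh$ then $(i,j) \in B$, and if $i \in U$ (including $i = z$) then $(i,j) \in U \times U^c = C$. No exceptional pairs arise, and $W$ is proper nonempty since $\seh \neq \varnothing$ and $U^c \neq \varnothing$. Equivalently, in the paper's notation the correct block is $\{\eeh, f_{d+1},\ldots,f_w, z\} \times \{f_1,\ldots,f_d\}$ rather than the one written.
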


\begin{proof} Suppose without loss of generality $C = \{f_1, \ldots, f_d\} \times \{f_{d+1}, \ldots, f_w, z\}$, for some $1 \leq d \leq w$. Then $D = \{ \eeh, f_{d+1}, \ldots, f_w, z\} \times \{f_1, \ldots, f_d\} \subset B \cup C \cup \bar{C}$.
\end{proof}
\begin{lem} \label{ebfbextendinb} Again, suppose $X = \{\eeh, \ffw, z\}$, and let $B=\seh \times \sfw \in \bxmz$. Let $C \in \bbb[\{\eeh, z\}]$ and $E \in \bbb[\{\ffw, z\}]$. Then there exists a block $A_{CE} \in \bx$ such that either $C\cup E \subset A_{CE} \subset C \cup E \cup B \cup \bar{B}$, or $C \cup \bar{E} \subset A_{CE} \subset C \cup \bar{E} \cup B \cup \bar{B}$. 
\end{lem}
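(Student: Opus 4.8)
Lemma \ref{ebfbextendinb}: Again, suppose $X = \{\eeh, \ffw, z\}$, and let $B=\seh \times \sfw \in \bxmz$. Let $C \in \bbb[\{\eeh, z\}]$ and $E \in \bbb[\{\ffw, z\}]$. Then there exists a block $A_{CE} \in \bx$ such that either $C\cup E \subset A_{CE} \subset C \cup E \cup B \cup \bar{B}$, or $C \cup \bar{E} \subset A_{CE} \subset C \cup \bar{E} \cup B \cup \bar{B}$.

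Let me think about how to prove this.

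The plan is to produce $A_{CE}$ explicitly from $P$ and the source side of $E$, after first normalising which of $E$, $\bar E$ to work with. Write $H=\seh$ and $W=\sfw$, so that $X = H \sqcup W \sqcup \{z\}$ and $B = H\times W$, $\bar B = W\times H$. Write $C = P\times P^c$ with $\varnothing\subsetneq P\subsetneq H\cup\{z\}$, where the complement of $P$ is taken inside $H\cup\{z\}$, and write $E$, or its reverse $\bar E$, as $Q\times Q^c$ inside $W\cup\{z\}$. The element $z$ lies in exactly one of $P$, $P^c$, and in exactly one of $Q$, $Q^c$; since the conclusion permits either $E$ or $\bar E$, first I would replace $E$ by $\bar E$ if necessary so that $z\in P \iff z\in Q$. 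This is always possible, and it is the crucial step: were $z$ forced onto opposite sides of $C$ and of $E$, then $C\cup E$ would be contained in no block at all, since a block $U\times U^c$ containing both would need $z\in U$ and $z\in U^c$. It then suffices to find $A_{CE}$ with $C\cup E\subseteq A_{CE}\subseteq C\cup E\cup B\cup\bar B$ for this normalised $E$.

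Next I would set $U := P\cup Q$ and $A_{CE}:=U\times U^c$. That $A_{CE}\in\bx$, i.e. $\varnothing\subsetneq U\subsetneq X$, follows once one records the disjointness facts coming from $P\setminus\{z\}\subseteq H$, $Q\setminus\{z\}\subseteq W$ and $z\in P \iff z\in Q$: these give $U^c = P^c\cup Q^c$, so $U^c\supseteq P^c\neq\varnothing$ and hence $U\neq X$, while $U\supseteq P\neq\varnothing$. The same bookkeeping gives $P,Q\subseteq U$ and $P^c,Q^c\subseteq U^c$, whence $C = P\times P^c\subseteq A_{CE}$ and $E = Q\times Q^c\subseteq A_{CE}$, which is the left-hand inclusion.

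For the right-hand inclusion, take $(i,j)\in A_{CE}$, so $i\in P\cup Q$ and $j\in P^c\cup Q^c$, and split into four cases according to whether $i\in P$ or $i\in Q$ and whether $j\in P^c$ or $j\in Q^c$. The cases $i\in P, j\in P^c$ and $i\in Q, j\in Q^c$ land directly in $C$ and in $E$. In the mixed cases one uses $z\in P \iff z\in Q$: e.g.\ if $i\in P$ and $j\in Q^c$, then either $i = z$, in which case compatibility places $(z,j)$ in $E$, or $i\in H$ and $j\in W$, so $(i,j)\in H\times W = B$; the case $i\in Q, j\in P^c$ is symmetric, landing in $C$ or in $\bar B = W\times H$. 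I would organise this verification as the two parallel sub-cases $z\in P\cap Q$ (where $P^c\subseteq H$ and $Q^c\subseteq W$) and $z\in P^c\cap Q^c$ (where $P\subseteq H$ and $Q\subseteq W$), in each of which every mixed case is a one-line check. The only place needing care is the treatment of the boundary instances $i = z$ or $j = z$ in the mixed cases, which is exactly where the normalisation pays off; beyond that the argument is a routine matching of indices.
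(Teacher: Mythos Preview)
Your proof is correct and follows essentially the same approach as the paper: normalise so that $z$ lies on the same side in $C$ and in (the chosen one of) $E,\bar E$, then take $A_{CE}$ to be the product of the unions of the two source-sides. The paper's proof simply writes down this $A_{CE}$ after a ``without loss of generality'' relabelling and omits the verification of the inclusions; your version supplies those checks in full, including the careful treatment of the $i=z$ and $j=z$ boundary cases, but the underlying construction is identical.
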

\begin{proof}Suppose without loss of generality $C = \{e_1, \ldots, e_k\} \times \{e_{k+1}, \ldots, e_h, z\}$, for some $1 \leq k \leq h$. Either $E$ or $\bar{E}$ has the form $\{f_1, \ldots, f_l \} \times \{f_{l+1}, \ldots, f_w, z\}$ (up to relabelling), so take $A_{CE}= \{ e_1, \ldots, e_k, f_1, \ldots, f_l\} \times \{e_{k+1}, \ldots, e_h, f_{l+1}, \ldots, f_w, z\}$.
\end{proof}

\begin{lem}[The symmetric difference of two blocks is a block] \label{symdiff} Let $B, C \in \bx$. Then there exists a block $D \in \bx$ such that $(B\cup \bar{B}) \triangle (C \cup \bar{C})=D \cup \bar{D}$. 
\end{lem}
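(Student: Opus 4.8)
The statement is that for any two blocks $B,C \in \bx$, the symmetric difference $(B\cup \bar B)\triangle(C\cup \bar C)$ is again of the form $D\cup\bar D$ for some block $D\in\bx$. The plan is to work at the level of the defining subsets. Write $B = V\times V^c$ and $C = U\times U^c$, so that $B\cup\bar B = (V\times V^c)\cup(V^c\times V)$ consists exactly of those pairs $(i,j)$ with $i,j$ on opposite sides of the partition $\{V,V^c\}$; equivalently, $\epsilon_B(i,j)=-1$ iff $i\in V \Leftrightarrow j\notin V$. So $B\cup\bar B$ is the ``disagreement set'' of the $\pm 1$-valued function $i\mapsto\epsilon_V(i)$: it is the set of pairs $(i,j)$ with $\epsilon_V(i)\ne\epsilon_V(j)$. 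Similarly $C\cup\bar C$ is the disagreement set of $i\mapsto\epsilon_U(i)$.

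The key observation is then purely combinatorial: the symmetric difference of the disagreement set of $\epsilon_V$ and the disagreement set of $\epsilon_U$ is the disagreement set of the pointwise product $\epsilon_W := \epsilon_V\cdot\epsilon_U$. Indeed, $(i,j)$ lies in exactly one of the two disagreement sets iff $\epsilon_V(i)\epsilon_V(j) \ne \epsilon_U(i)\epsilon_U(j)$, i.e. iff $\epsilon_V(i)\epsilon_U(i) \ne \epsilon_V(j)\epsilon_U(j)$, i.e. iff $\epsilon_W(i)\ne\epsilon_W(j)$. Setting $W := \{\,i\in X \mid \epsilon_W(i)=1\,\} = (V\cap U)\cup(V^c\cap U^c)$, we have that the disagreement set of $\epsilon_W$ is precisely $(W\times W^c)\cup(W^c\times W) = D\cup\bar D$ where $D := W\times W^c$. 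Hence $(B\cup\bar B)\triangle(C\cup\bar C) = D\cup\bar D$.

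The one remaining point to check is that $D$ is genuinely a block, i.e. that $W$ is a proper nonempty subset of $X$ — otherwise $D=\varnothing$. This fails exactly when $\epsilon_W$ is constant, i.e. when $\epsilon_V = \pm\epsilon_U$, i.e. when $U=V$ or $U=V^c$; in those cases $C = B$ or $C=\bar B$, so $C\cup\bar C = B\cup\bar B$ and the symmetric difference is empty. I would handle this by either (a) tacitly assuming $C\notin\{B,\bar B\}$, since in the degenerate case the symmetric difference is $\varnothing$ and the statement is vacuous/trivially adjusted, or (b) noting that in the intended applications of this lemma the two blocks are always distinct and distinct from each other's bar. The main — and essentially only — obstacle is just bookkeeping: translating cleanly between the ``pair'' description of a block, the indicator function $\epsilon_V$, and the set $W$, and correctly tracking this degenerate case; there is no real mathematical difficulty, since the whole content is that $\{\pm1\}$-valued functions on $X$ form a group under pointwise multiplication and the map sending such a function to its disagreement set is a homomorphism to $(\,2^{X\times X},\triangle\,)$.
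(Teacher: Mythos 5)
Your proof is correct and, once unwound, produces exactly the same block as the paper's: writing $B=V\times V^c$ and $C=U\times U^c$, your $W=(V\cap U)\cup(V^c\cap U^c)$ is precisely the set $\{e_1,\dots,e_s,f_{t+1},\dots,f_w\}$ that the paper places in the first factor of $D$. The difference is presentational rather than mathematical: the paper simply exhibits $D$ and leaves the reader to check it works, while you derive it from the cleaner structural observation that the assignment sending a $\pm 1$-valued function on $X$ to its disagreement set in $X\times X$ is a homomorphism from $(\{\pm 1\}^X,\cdot)$ to $(2^{X\times X},\triangle)$ with kernel the constants, so that symmetric differences of ``unordered blocks'' $B\cup\bar B$ correspond to pointwise products of indicators. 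This makes the reason for the identity transparent rather than verified by inspection. You are also right to flag the degenerate case $C\in\{B,\bar B\}$, where the symmetric difference is empty and therefore not of the form $D\cup\bar D$ for a genuine block $D$ (which by Definition~\ref{block} requires $\varnothing\subsetneq V\subsetneq X$); the paper's proof (and, read literally, the lemma statement) silently assumes $s$ and $t$ are not both extremal, so your remark is a genuine, if minor, sharpening of what is actually being claimed.
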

\begin{proof}
Suppose $B  = \seh \times \sfw$, and 
\[
C = \{e_1, \ldots, e_s, f_1, \ldots, f_t\} \times \{e_{s+1}, \ldots, e_h, f_{t+1}, \ldots, f_w\}.
\] Then take $D = \{e_1, \ldots, e_s, f_{t+1}, \ldots, f_w\} \times \{e_{s+1}, \ldots, e_h, f_1, \ldots, f_t\}$.
\end{proof}

\begin{rmk} \label{restrictblock} Let $B \in \bx$ and $z \in X$. Then $B \vert_{\xmz}:=B \cap ((X \setminus \{z\}) \times (X \setminus \{z\})) \in \bxmz$. \end{rmk}

We are now ready to study the quotient ring $R$. We begin by observing that the polynomial ring obtained by adjoining the elements in the subset $Y^-(X)\subset Y(X)$ to $\qqq$ is isomorphic to the polynomial ring introduced in \cite{aj} to study the case $G=SU(n)$; we can therefore use the following result from our earlier work:

\begin{lem} \label{propfromaj} Let $X$ be a finite set. Let $p \in \qymx$ be a monomial of degree at least $\frac{1}{2}\vert X \vert(\vert X \vert -1)a-\vert X \vert +2$, for some $a \in \nnn$. Then for each $B \in \bx$ we can find a monomial $\phi_B \in \qymx$ such that 
\[[p] = \left[\sum_{B \in \bx}\phi_B\prod_{(i,j) \in B}(\ymij)^a\right].
\]
\end{lem}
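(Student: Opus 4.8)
The plan is to reduce this statement to \cite{aj} by identifying the subring generated by the $Y^-(X)$ variables with the polynomial ring studied there. First I would make the dictionary precise: in \cite{aj} one studies a polynomial ring on generators indexed by ordered pairs $(i,j)$ with $i\neq j$, modulo the relations $y^-_{ij}+y^-_{ji}=0$ and $y^-_{ij}+y^-_{jk}+y^-_{ki}=0$; these are exactly the relations among the $\ymij$ forced by the ideal $I$ once we restrict attention to $\qymx$. Concretely, the inclusion $\qymx\hookrightarrow\qyx$ descends to a map $\qymx/(I\cap\qymx)\to R$, and the relations defining $I$ that involve only minus-variables are precisely the two listed above (the relation $\ypij-y^+_{jk}+y^-_{ki}=0$ involves plus-variables and does not restrict to a relation purely among minus-variables, so it contributes nothing here). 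Hence the quotient of $\qymx$ by the minus-only relations is isomorphic, as a graded ring, to the ring appearing in \cite{aj}, with the degree grading matching.

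Next I would invoke the relevant proposition from \cite{aj}: there, for a monomial $p$ in the minus-variables of degree at least $\frac{1}{2}|X|(|X|-1)a - |X| + 2$, one shows that modulo the relations $[p]$ can be written as $\left[\sum_{B\in\bx}\phi_B\prod_{(i,j)\in B}(\ymij)^a\right]$ for suitable monomials $\phi_B$. This is exactly the statement we want, so the proof is essentially: apply the isomorphism of rings, quote the result of \cite{aj} in the source ring, and transport the conclusion back along the isomorphism. The key point to check when transporting is that an equality that holds modulo $I\cap\qymx$ in $\qymx$ automatically holds modulo $I$ in $\qyx$ — which is immediate since $I\cap\qymx\subseteq I$ — so $[p]$ and the claimed sum already agree in $R$.

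The one genuine subtlety — and the step I expect to need the most care — is verifying that the ideal $I\cap\qymx$ is generated (inside $\qymx$) exactly by the elements $\ymij+y^-_{ji}$ and $\ymij+y^-_{jk}+y^-_{ki}$, i.e. that intersecting the larger ideal $I$ with the minus-subring does not produce extra relations coming from eliminating the plus-variables. This is really a statement that the plus-variables can be consistently solved for in terms of the minus-variables: the relation $\ypij = y^+_{jk} - y^-_{ki}$ (and $\ypij=y^+_{ji}$) expresses every $\ypij$ in terms of, say, the diagonal $y^+_{ii}$ together with minus-variables, with no constraint imposed back on the minus-variables alone. I would make this precise by exhibiting an explicit splitting, i.e. a section of the projection onto the minus-part, or by a direct monomial-order/elimination argument, and then conclude that $\qqq[Y^-(X)]\to R$ has kernel exactly the minus-only relations, giving the desired isomorphism onto its image. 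Once that is in hand, the proof of Lemma~\ref{propfromaj} is one line: restrict to the image of $\qymx$, apply \cite{aj}, and lift.
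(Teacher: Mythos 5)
Your proposal takes essentially the same route as the paper: identify $\qymx$ with the polynomial ring of \cite{aj} and transfer Proposition 3.6 of that paper along the induced map into $R$. Two remarks, neither fatal. First, the ``one genuine subtlety'' you flag is not actually needed. Write $J \subseteq \qymx$ for the ideal generated by $\ymij + y^-_{ji}$ and $\ymij + y^-_{jk} + y^-_{ki}$. The cited result gives $p - \sum_{B}\phi_B\prod_{(i,j)\in B}(\ymij)^a \in J$, and since the generators of $J$ are visibly among the generators of $I$, this element lies in $I$; that is all the lemma requires. Whether $J$ equals $I \cap \qymx$ is irrelevant in this direction --- extra relations in $I$ can only make the equality in $R$ easier, never harder --- so the ``no hidden relations from eliminating plus-variables'' verification you anticipate needing the most care on can simply be dropped. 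Second, the paper's one-line proof reads ``replacing $2g$ by $a$ in the \emph{proof} of Proposition 3.6 in \cite{aj}'': that proposition is stated there with the specific constant $2g$, and one must observe that its argument is uniform in that constant so it yields the statement for arbitrary $a \in \nnn$. Your write-up quietly treats the result from \cite{aj} as already carrying a free parameter $a$; you should note that this is a (routine) inspection of the proof rather than a direct citation of the statement.
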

\begin{proof} This follows from replacing $2g$ by $a$ in the proof of Proposition 3.6 in \cite{aj}. \end{proof}

\begin{rmk} \label{isomflip}Let $X$ be a finite set and let $B=V \times V^c$ be a block in $\bx$. Consider the isomorphism $f_B:\qyx \to \qyx$ given by 
\[f_B:\ypmij \mapsto \begin{cases} - \ypmij & i, j \in V \\ \ypmij & i, j \notin V \\ -y^\mp_{ij} & (i,j) \in B \\ y^\mp_{ij} & (i,j) \in \bar{B} \end{cases}
\]or more concisely, $y^{\pm}_{ij} \mapsto \mp\epsilon_V(i)y^{-\epsilon_B(i,j)}_{ij}$. This map restricts to an isomorphism $f_B:\qymx \to \qymbx$; it is straightfoward to check that $f_B$ descends to an isomorphism on the quotient $R$.\end{rmk}

\begin{cor} \label{ajpropflipblock} Let $X$ be a finite set, let $B=V\times V^c \in \bx$ be a block, let $a \in \nnn$, and let $p \in \qymbx$ be a monomial of degree at least $\frac{1}{2}\vert X \vert (\vert X \vert -1)a - \vert X \vert +2$. Then for each $C \in \bx$ we can find a monomial $\phi_C \in \qymbx$ such that
\[[p]= \left[  \sum_{C \in \bx}\phi_C \prod_{(i,j) \in C}(y_{ij}^{\epsilon_B(i,j)})^a\right].
\]
\end{cor}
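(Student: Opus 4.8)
The plan is to deduce Corollary~\ref{ajpropflipblock} directly from Lemma~\ref{propfromaj} by transporting everything through the isomorphism $f_B$ introduced in the remark immediately preceding the corollary. First I would observe that $f_B$ restricts to an isomorphism $f_B\colon \qymbx \to \qymx$ (the inverse of the restriction $f_B\colon \qymx \to \qymbx$ recorded in the remark), and that since $f_B$ is grading-preserving it sends the monomial $p \in \qymbx$ of degree at least $\tfrac12\vert X\vert(\vert X\vert-1)a-\vert X\vert+2$ to a monomial $f_B(p) \in \qymx$ of the same degree. Actually, to keep the computation of signs clean, I would rather apply $f_B^{-1}$: note $f_B$ is an involution up to signs, or more safely just work with $f_B^{-1}=f_B$ composed appropriately; the cleanest route is to apply $f_B$ to $p$, landing in $\qymx$.

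Next I would apply Lemma~\ref{propfromaj} to $f_B(p) \in \qymx$: this produces monomials $\phi'_C \in \qymx$, for each $C \in \bx$, with
\[
[f_B(p)] = \left[\sum_{C \in \bx}\phi'_C\prod_{(i,j)\in C}(\ymij)^a\right]
\]
in $R$. Now I would apply the inverse isomorphism $f_B^{-1}$ (which descends to $R$ by the remark) to both sides. On the left this returns $[p]$. On the right, $f_B^{-1}$ carries each $\phi'_C$ to a monomial $\phi_C := f_B^{-1}(\phi'_C) \in \qymbx$, and carries $\ymij$ to $\pm\, y^{\epsilon_B(i,j)}_{ij}$ according to the four cases in the definition of $f_B$: precisely, $\ymij \mapsto -y^{\epsilon_B(i,j)}_{ij}$ when $i,j$ lie on the same side of $B$ (i.e.\ $\epsilon_B(i,j)=-1$, so $y^{\epsilon_B(i,j)}_{ij}=\ymij$) and $\ymij \mapsto y^{\epsilon_B(i,j)}_{ij}=\ypij$ when $(i,j)\in B\sqcup\bar B$ (i.e.\ $\epsilon_B(i,j)=1$). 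In either case $(\ymij)^a \mapsto (\pm 1)^a (y^{\epsilon_B(i,j)}_{ij})^a$, and the sign $(\pm 1)^a$ can be absorbed into a redefined monomial $\phi_C$ (possibly with a change of overall sign, which is harmless since we only claim the existence of \emph{some} monomial $\phi_C$). This yields exactly
\[
[p] = \left[\sum_{C \in \bx}\phi_C\prod_{(i,j)\in C}(y^{\epsilon_B(i,j)}_{ij})^a\right],
\]
as desired.

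The only genuinely delicate point is bookkeeping the signs introduced by $f_B$ and checking that $f_B$ really does interchange $\qymx$ and $\qymbx$ compatibly with the products $\prod_{(i,j)\in C}(\cdot)^a$; but this is precisely the content of the preceding remark, so the argument is essentially formal. I expect no substantive obstacle: the corollary is a cosmetic restatement of Lemma~\ref{propfromaj} in ``$B$-twisted'' coordinates, and the main work — already done in \cite{aj} and quoted as Lemma~\ref{propfromaj} — has been isolated. I would write the proof in three or four sentences: apply $f_B$, invoke Lemma~\ref{propfromaj}, apply $f_B^{-1}$, and absorb signs.
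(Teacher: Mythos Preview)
Your proposal is correct and follows essentially the same approach as the paper: apply the isomorphism $f_B$ (which is in fact an involution, so $f_B^{-1}=f_B$) to move $p$ into $\qymx$, invoke Lemma~\ref{propfromaj} there, apply $f_B$ again, and absorb the resulting signs into the $\phi_C$. The paper records the sign explicitly as $(-1)^{\lambda_C}$ with $\lambda_C = \vert\{(i,j)\in C \mid i\in V\}\vert$; your case-by-case sign description is slightly off (the sign is $-1$ exactly when $i\in V$, not in all same-side cases nor in all of $B\sqcup\bar B$), but since you correctly absorb the signs into $\phi_C$ this does not affect the argument.
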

\begin{proof}
Consider the isomorphism $f_B:\qymx \to \qymbx$ defined above. Applying Lemma \ref{propfromaj} to $f_B^{-1}(p)$, we can write 
\[[f^{-1}_B(p)]=\left[\sum_{C \in \bx}\phi_C\prod_{(i,j) \in C}(\ymij)^a\right].
\]
Thus 
\[ [p]= \left[ \sum_{C \in \bx}f_B(\phi_C) \prod_{(i,j) \in C}(-1)^{a\lambda_C}(y_{ij}^{\epsilon_B(i,j)})^a \right],
\] where $\lambda_C = \vert \{(i,j) \in C \mid i \in V\} \vert.$
\end{proof}

\begin{lem} 
\label{new}
Suppose $X = \{\eeh, \ffw, z\}$, where both $w$ and $h$ are greater than zero.
Let $D = \{e_1, \ldots, e_d\} \times \{e_{d+1}, \ldots, e_h, z\}$, where $0 \leq d \leq h$, be either a block in $\sehz\times \sehz$ or the empty set. Then a polynomial $p \in \qyx$ is equivalent in $R$ to a sum of terms of the form $\alpha_h \alpha_w$, where $\alpha_h \in \qqq[Y^-_D(\sehz)]$ and $\alpha_w \in \qqq[Y(\sfwz)]$.
\end{lem}
\begin{proof} First use the relations $[y^-_{e_i f_j}]=[y^-_{e_i z}+ y^-_{z f_j}]$ and $[y^+_{e_i f_j}]=[y^+_{f_j z}- y^-_{z e_i}]$ to rewrite $[p]$ using only elements of $\qyehz$ and $\qyfwz$. We must show that the images in $R$ of elements of $\qqq[Y^+_D(\sehz)]$ have representatives that are sums of elements in $\qqq[Y^-_D(\sehz)]$ and in $\qyfwz$. 

Suppose $i,j \in \seh$, and observe that $[\ypij] = [y^-_{jz}+y^-_{zk}+y^+_{zk} - y^-_{zi}]$, where we may choose $k \in \sfw$. Observe further that $[y^+_{iz}]=[y^-_{iz}+y^-_{zk}+y^+_{zk}]$, where again we may choose $k \in \sfw$. This proves the lemma in the case where $D$ is empty. If $D$ is nonempty, let $B \in \bx$ be a block with $D \subseteq B$, an dconsider the images of the above equations under the isomorphism $f_B$ defined in Remark \ref{isomflip}:
\begin{align*}
\left[-\epsilon_V(i)y^{-\epsilon_B(i,j)}_{ij}\right] &= \left[-\epsilon_V(j)y^{\epsilon_B(j,z)}_{jz}-y^{\epsilon_B(z,k)}_{zk}-y^{-\epsilon_B(z,k)}_{zk}+y^{\epsilon_B(z,i)}_{zi}\right] \\
\left[-\epsilon_V(i)y^{-\epsilon_B(i,z)}_{iz}\right] &= \left[-\epsilon_V(i)y^{\epsilon_B(i,z)}_{iz}-y^{\epsilon_B(z,k)}_{zk}-y^{-\epsilon_B(z,k)}_{zk}\right]
\end{align*}
Since $D \subseteq B$, we know that $\epsilon_B(i,j) = \epsilon_D(i,j)$ whenever $i,j \in \sehz$. Thus these relations allow us to rewrite $[y^{-\epsilon_D(i,j)}_{ij}]$ using only elements of $\qqq[Y^-_D(\sehz)]$ and $\qyfwz$.  \end{proof}

\begin{lem} \label{zroworoct} Let $X$ be a finite subset of $\nnn$ with $\vert X \vert \geq 2$, and let $B=V\times V^c \in \bx$. Let $z \in V^c$. Let $p \in \qyx$ be a homogeneous polynomial of degree at least $2gm(m-1)-m+1-(m-1)(m-2)g$. Then $[p]$ has a representative in $\qqq[Y^-_B(X) \cup \{ y^{-\epsilon_V(i)}_{iz} \mid i \in \xmz\}]$; furthermore, this representative can be chosen to be a linear combination of monomials, each of which, when factorised as $qr$ with $q \in \qymbx$ and $r \in \qqq[\{y^{-\epsilon_V(i)}_{iz}\mid i \in X \setminus \{z\}]$, either satisfies $\deg q \geq m(m-1)g-m+2$, or $r=\theta \prod_{i \in \xmz}(y^{-\epsilon_V(i)}_{iz})^{2g}$ for some monomial $\theta$.  \end{lem}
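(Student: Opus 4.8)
Write $m = \vert X \vert$ throughout, and let $S := \qqq[\ymbx \cup \{y^{-\epsilon_V(i)}_{iz} \mid i \in \xmz\}]$ be the polynomial ring named in the statement. The plan is to push $[p]$ into $S$ and then rebalance exponents. Since $z \in V^c$ one checks $\epsilon_B(i,z) = \epsilon_V(i)$ for every $i \in \xmz$, so $S$ contains the generator $y^{\epsilon_V(i)}_{iz}$ of $\ymbx$ as well as $y^{-\epsilon_V(i)}_{iz}$; that is, for each $i \in \xmz$ it contains both $\ypij$ and $\ymij$ specialised to the pair $(i,z)$. These already generate $R$ — rewrite every generator of $\qyx$ through the base point $z$ using the relations \eqref{relns} — so $S$ surjects onto $R$. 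Hence the existence of \emph{some} representative of $[p]$ in $S$ is automatic, and as the relations are homogeneous I may assume $p$ is a monomial and only need to arrange the degree dichotomy.

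The mechanism is the following consequence of \eqref{relns}: for distinct $i_0, i_1 \in \xmz$ there is an identity in $R$ of the form
\[
y^{-\epsilon_V(i_1)}_{i_1 z} \;=\; \pm\, y^{-\epsilon_V(i_0)}_{i_0 z} \;+\; \lambda,
\]
where $\lambda$ is a degree-one $\qqq$-linear combination of generators of $\ymbx$; the ``bridge'' is essentially $y^{\epsilon_B(i_0,i_1)}_{i_0 i_1} \in \ymbx$, and which of $\ypij, \ymij$ this is — and whether one or two further generators $y^{\epsilon_V(i_k)}_{i_k z} \in \ymbx$ also appear — is determined by a short case check on which of $i_0, i_1$ lie in $V$. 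Substituting this identity for a single factor $y^{-\epsilon_V(i_1)}_{i_1 z}$ of a monomial $qr$ of $S$ (with $q \in \qymbx$ and $r \in \qqq[\{y^{-\epsilon_V(i)}_{iz}\}]$) produces: one term in which a single unit of $r$-degree has migrated from the $i_1$-slot to the $i_0$-slot, with $\deg r$ (hence $\deg q$) unchanged; and one or two terms in which $\deg q$ has gone up by $1$ and $\deg r$ down by $1$. All terms still lie in $S$.

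I then run the following procedure on each $S$-monomial $qr$ of an initial representative of $[p]$. If $\deg q \ge m(m-1)g - m + 2$, stop; this is the first alternative. Otherwise $\deg q \le m(m-1)g - m + 1$, and from $\deg q + \deg r = \deg p \ge 2gm(m-1) - m + 1 - (m-1)(m-2)g$ a one-line computation gives $\deg r \ge 2g(m-1)$. If every $y^{-\epsilon_V(i)}_{iz}$ occurs in $r$ to power $\ge 2g$, stop: then $r = \theta \prod_{i \in \xmz}(y^{-\epsilon_V(i)}_{iz})^{2g}$ for a monomial $\theta$, the second alternative. Otherwise some slot $i_0$ has exponent $< 2g$ in $r$, and since $\deg r \ge 2g(m-1)$ averaging over the other $m - 2$ slots forces some slot $i_1$ to have exponent $> 2g$; apply the identity above to one factor $y^{-\epsilon_V(i_1)}_{i_1 z}$ and recurse on each monomial produced. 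By construction this assembles a representative of $[p]$. To see that it terminates, order monomials lexicographically by the pair $(\deg r, \sum_{i \in \xmz}(e_i - 2g)^+)$, where $e_i$ is the exponent of $y^{-\epsilon_V(i)}_{iz}$: the ``transfer'' term keeps $\deg r$ fixed and strictly lowers the second coordinate (a unit leaves a slot with exponent $> 2g$ and enters one that stays $\le 2g$), while the ``$\deg q$-up'' terms strictly lower $\deg r$, and this pair runs over a well-ordered set. Every leaf is one of the two alternatives, and the procedure cannot stall, since when the second coordinate vanishes all $e_i \le 2g$, which with $\deg r \ge 2g(m-1)$ forces every $e_i = 2g$. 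The case $m = 2$ is immediate, $r$ being then a single variable to a power $\ge 2g$.

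The one genuinely delicate ingredient is the case analysis yielding the bridging identities with the correct signs and $\pm$-flavours relative to both the block $B$ and the subset $V$; granting that, surjectivity of $S$ onto $R$, the degree bookkeeping, and termination are all routine. Note finally that the threshold $m(m-1)g - m + 2$ equals $\frac{1}{2}m(m-1)(2g) - m + 2$, exactly the hypothesis of Corollary \ref{ajpropflipblock} with $a = 2g$; that is how the first alternative will be exploited later.
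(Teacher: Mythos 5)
Your proof is correct, and the underlying estimate is the same as the paper's, but the mechanism differs. The paper's proof works one slot at a time: it first takes a representative of $[p]$ in the \emph{smaller} ring $\qqq[\ymbx \cup \{y^{-\epsilon_V(1)}_{1z}\}]$ (a single distinguished $z$-slot), factors each term as $q_1(y^{-\epsilon_V(1)}_{1z})^{d_1}$, and observes that if $\deg q_1 < m(m-1)g-m+2$ then $d_1 \geq 2g(m-1) \geq 2g$. It then peels off $(y^{-\epsilon_V(1)}_{1z})^{2g}$, re-expresses the remainder over $\ymbx \cup \{y^{-\epsilon_V(2)}_{2z}\}$, and iterates; at step $i$ the bound $d_i \geq 2g(m-i)$ holds, so after at most $m-1$ steps every $z$-slot carries $2g$ powers and termination is automatic. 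You instead work all at once in the full ring $S = \qqq[\ymbx \cup \{y^{-\epsilon_V(i)}_{iz} \mid i \in \xmz\}]$ and rebalance exponents across the $m-1$ slots via the bridging identity (which, for what it's worth, always involves exactly one generator $y^{\epsilon_B(i_0,i_1)}_{i_0 i_1}$ of $\ymbx$, not one or two — a minor overcaution on your part that changes nothing). This costs you a genuine termination argument, which your lexicographic order on $\bigl(\deg r, \sum_i (e_i - 2g)^+\bigr)$ correctly supplies: the transfer term strictly drops the excess, the $\deg q$-up terms strictly drop $\deg r$, and the "cannot stall" observation closes the loop. Both proofs rest on the same pigeonhole computation — $\deg q < m(m-1)g - m + 2$ forces $\deg r \geq 2g(m-1)$, enough to fill all $m-1$ slots to level $2g$ — so your argument is a valid, slightly more free-form alternative to the paper's bounded sequential peeling.
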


\begin{proof} We first claim that for any $ i \in \xmz$, the element $[y^{-\epsilon_V(i)}_{iz}]$ together with the images in $R$ of the elements of $Y^-_B(X)$ form a set of generators for $R$. To see this, fix $i \in \xmz$. Since $z \in V^c$, we know that $y^{\epsilon_V(i)}_{iz} \in \ymbx$, and so both $y^+_{iz}$ and $y^-_{iz}$ are in $\ymbx \cup \{ y^{-\epsilon_V(i)}_{iz}\}$. Observe that $[\ypij + \ymij]=[y^+_{iz}+y^-_{iz}]$, and for any $j \in X \setminus \{z, i\}$, either $\ypmij \in \ymbx$ or $\ymij \in \ymbx$. Thus both $[\ypij]$ and $[\ymij]$ are elements of $\langle [\ymbx], [y^{-\epsilon_V(i)}_{iz}] \rangle \subset R$. Similarly, given any $l\neq j \in X$, either $y^+_{lj} \in \ymbx$ or $y^-_{lj} \in \ymbx$. But $[y^+_{lj}+y^-_{lj}]=[\ypij + \ymij]$, so $[y^+_{lj}]$ and $[y^-_{lj}]$ are both in $\langle [\ymbx ], [y^{-\epsilon_V(i)}_{iz}]\rangle $. Finally, observe that $[y^+_{ll}]=[y^+_{lj}+y^-_{lj}]$, so each $[y^+_{ll}]$ is also in $\langle [\ymbx],[y^{-\epsilon_V(i)}_{iz}]\rangle$.  

Without loss of generality, suppose $X=[m]$ and $z=m$. Pick a representative for $[p]$ in $\qqq[\ymbx \cup \{y^{-\epsilon_V(1)}_{1z}\}]$, and factorise each term as $[q_1(y^{-\epsilon_V(1)}_{1z})^{d_1}]$, with $q_1 \in \qymbx$. If $\deg q_1$ is at least $m(m-1)g-m+2$, this term has the desired form; else
\begin{align*}
d_1 &\geq 2gm(m-1)-m+1-(m-1)(m-2)g-m(m-1)g+m-1 \\ & = 2g(m-1) \\ &\geq 2g.
\end{align*} In this case, pick a representative for $[q_1(y^{-\epsilon_V(1)}_{1z})^{d_1}]$ that is a sum of terms of the form $(y^{-\epsilon_V(1)}_{1z})^{2g}q_2(y^{-\epsilon_V(2)}_{2z})^{d_2}$, where $q_2 \in \qymbx$, and consider each term separately. Any term where $\deg q_2 \geq m(m-1)g-m+2$ now has the desired form; else $d_2 \geq 2g(m-1)-2g=2g(m-2)\geq 2g$. Repeat this process: for each term, either $\deg q_i \geq m(m-1)g-m+2$, or $d_i \geq 2g(m-i)$. Thus any term with $\deg q_{m-1} < m(m-1)g-m+2$ has $d_i \geq 2g$ $\forall$ $1 \leq i \leq m-1$, and so is a multiple of $\prod_{i \in X \setminus \{z\}}(y^{-\epsilon_V(i)}_{iz})^{2g}$.
\end{proof}

\begin{lem} 
\label{monster} 
Let $X$ be a finite subset of $\nnn$ with $\vert X \vert \geq 2$, and suppose $p \in \qyx$ is a monomial of degree at least $2g\vert X \vert (\vert X \vert -1)-\vert X \vert +1$. Then we can find a homogeneous polynomial $\chi_\varnothing \in \qyx$, together with polynomials $\psi_B, \chi_B \in \qyx$ for each block $B \in \bx$, such that 
\[
[p]=\left[ \sum_{B \in \bx} \psi_B \prod_{(i,j) \in B}(\ypij \ymij)^{2g} + \sum_{B \in \bx \cup \{\varnothing\}}\chi_B \prod_{(i,j) \in B}(\ymij)^{2g}\prod_{\substack{(i,j) \notin B \cup \bar{B} \\ i < j}}(\ypij)^{2g}\right].
\]
\end{lem}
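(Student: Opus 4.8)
The plan is to induct on $m = \vert X \vert$. It is worth seeing first why the statement holds at all. Modulo $I$ the relations \eqref{relns} force, in $R$, the identities $\ypij = \tfrac12(y^+_{ii}+y^+_{jj})$ and $\ymij = \tfrac12(y^+_{ii}-y^+_{jj})$, so $R \cong \qqq[t_1,\dots,t_m]$ with $t_i := y^+_{ii}$; under this identification every product appearing on the right-hand side is, up to a nonzero scalar, a product of $2g$-th powers of linear forms $t_i \pm t_j$. A short case analysis on a point $(t_1,\dots,t_m)$ (is it the origin; if not, partition the indices by $i \sim j \iff t_i = \pm t_j$ and pick out a single class) shows that the only common zero of all these products is $0$, so the ideal $\mathfrak a$ they generate is $\mathfrak m$-primary and already contains every monomial of sufficiently large degree; the content of the lemma is the explicit bound $2gm(m-1)-m+1$, which we obtain combinatorially. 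The base cases $m\in\{2,3\}$ are direct: for $m=2$, writing $u=t_1+t_2$ and $v=t_1-t_2$, the right-hand side spans the ideal $(u^{2g},v^{2g})$, and any monomial in $t_1,t_2$ of degree $\ge 4g-1$ expands into monomials $u^av^b$ with $a+b\ge 4g-1$, hence with $a\ge 2g$ or $b\ge 2g$; the case $m=3$ is checked the same way, with more cases.

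For the inductive step let $m\ge 4$. Lemma \ref{oddsomez} applied to $p$ produces a vertex $z\in X$ and a factorisation $p=q_zr_z$ with $q_z\in\qqq[Y(\xmz)]$ of degree at least $2g(m-1)(m-2)-m+2$; this is exactly the hypothesis of the lemma for the set $\xmz$ of size $m-1$. By the inductive hypothesis $[q_z]$ is a combination of the two families of good products attached to blocks $B'\in\bxmz\cup\{\varnothing\}$, and multiplying through by $r_z$ and breaking the coefficient polynomials into monomials reduces everything to the claim: \emph{any monomial $\mu$ of degree at least $2gm(m-1)-m+1$ that is divisible by $\prod_{(i,j)\in B'}(\ypij\ymij)^{2g}$ or by $\prod_{(i,j)\in B'}(\ymij)^{2g}\prod_{(i,j)\notin B'\cup\bar{B'},\, i<j}(\ypij)^{2g}$ for some $B'\in\bxmz\cup\{\varnothing\}$ is a combination of the good products for $X$.}

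To prove the claim I would choose a block $B=V\times V^c\in\bx$ with $z\in V^c$ that restricts to $B'$ on $\xmz$ in the sense of Remark \ref{restrictblock} (if $B'=\varnothing$, any such $B$ will do). Since $\deg\mu\ge 2gm(m-1)-m+1$ exceeds the threshold $2gm(m-1)-m+1-(m-1)(m-2)g$, Lemma \ref{zroworoct} rewrites $[\mu]$ as a sum of monomials $qr$ with $q\in\qymbx$ and $r$ a monomial in the ``$z$-column'' variables $\{y^{-\epsilon_V(i)}_{iz}\}_{i\in\xmz}$. For the summands with $\deg q\ge m(m-1)g-m+2$, Corollary \ref{ajpropflipblock} (the $f_B$-twist of the $SU(n)$ result Lemma \ref{propfromaj}) expresses $[q]$ through the twisted cores $\prod_{(i,j)\in C}(y^{\epsilon_B(i,j)}_{ij})^{2g}$, $C\in\bx$, and the block-interaction Lemmas \ref{cd1sbextends}, \ref{ebfbextendinb} and \ref{symdiff} recombine such a core with $r$ and the surviving free factors into good products for suitable blocks of $X$ (the symmetric-difference lemma dictating which block of $X$ actually appears). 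For the remaining summands $r$ already carries the entire $z$-column $\prod_{i\in\xmz}(y^{-\epsilon_V(i)}_{iz})^{2g}$, which is precisely the set of $z$-involving factors of the second good product attached to $B$, and what is left is a $z$-free monomial whose degree, by the pigeonhole Lemmas \ref{indind}--\ref{indoroct}, is large enough at each branch to supply the remaining $2g$-th powers (of within-part $\ypij$'s, or of the crossing $\ymij$'s not involving $z$) needed to complete a good product. Collecting the contributions over the finitely many blocks $B$, over $B'$, and over the two cases, and gathering the resulting coefficient polynomials as $\chi_\varnothing$, $\psi_B$ and $\chi_B$, completes the induction.

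I expect the last step to be the real obstacle: organising the degree accounting so that along every branch of Lemma \ref{zroworoct}, Corollary \ref{ajpropflipblock} and the pigeonhole lemmas the leftover degree is always at least what is needed to build the next batch of $2g$-th powers, and so that every core that is produced lands in one of the two permitted families rather than in some intermediate shape. The five pigeonhole lemmas and the block-interaction lemmas are engineered exactly to close these gaps, and chaining them together without a leak is the delicate part.
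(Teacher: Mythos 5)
Your high-level skeleton matches the paper's: induct on $m=\lvert X\rvert$, use Lemma \ref{oddsomez} to produce a vertex $z$ and the factorisation $p=q_zr_z$, apply the inductive hypothesis to $q_z$, and reduce to handling a monomial that carries a good-product factor attached to a block $B'\in\bxmz\cup\{\varnothing\}$. The opening observation that $R\cong\qqq[t_1,\dots,t_m]$ with $t_i=y^+_{ii}$ via $[\ypij]=\tfrac12[y^+_{ii}+y^+_{jj}]$ and $[\ymij]=\tfrac12[y^+_{ii}-y^+_{jj}]$ is correct and is a clean piece of context the paper never states; your base cases are also fine (the paper only records $m=2$, but Lemma \ref{oddsomez} is stated for $m>3$, so doing $m=3$ by hand, as you propose, is a reasonable repair of a small gap in the paper).

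The genuine gap is in the inductive step: you propose to prove your displayed claim by applying Lemma \ref{zroworoct} to the \emph{full} monomial $\mu$. Lemma \ref{zroworoct} replaces $[\mu]$ by a fresh representative living in $\qqq[Y^-_B(X)\cup\{y^{-\epsilon_V(i)}_{iz}\}]$, and this rewriting discards precisely the structural information the induction gave you: it does not remember that $\mu$ was divisible by $\prod_{(i,j)\in B'}(\ypij\ymij)^{2g}$, nor by the minus-block-rest-plus product of $B'$. In the zero-block case this loss is unavoidable: $(\ypij\ymij)^{2g}$ contains both $\ypij$ and $\ymij$ for each $(i,j)\in B'$, while $\qymbx$ contains at most one of them for each $(i,j)$, so the rewriting necessarily dismantles the factor you are trying to use. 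The paper avoids this by never feeding the whole monomial into Lemma \ref{zroworoct}: for the minus-block-rest-plus family it applies Lemma \ref{zroworoct} only to the free coefficient $r_z\tilde\chi_C$, so the $B'$-factor survives untouched and recombines afterwards; and the zero-block family is handled by a completely different mechanism (rewrite $r_z\tilde\psi_{B'}$ using $[\ypij]=[y^+_{iz}+y^-_{zj}]$, $[\ymij]=[y^-_{iz}+y^-_{zj}]$ to split it as $p_hp_w$ supported on $\{e_1,\dots,e_h,z\}$ and $\{f_1,\dots,f_w,z\}$; invoke Lemma \ref{indind}; apply the inductive hypothesis to whichever of $p_h,p_w$ has large enough degree; then close with Lemma \ref{indoroct}, Corollary \ref{ajpropflipblock}, and Lemmas \ref{cd1sbextends} and \ref{ebfbextendinb}). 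Your plan treats the two families symmetrically, which is not how the argument actually closes. Your own caveat (``the last step is the real obstacle'') points at exactly this: the cores $\prod_{(i,j)\in C}(y^{\epsilon_B(i,j)}_{ij})^{2g}$ that Corollary \ref{ajpropflipblock} produces carry $\ypij$ on the crossing pairs and $\ymij$ on the within-part pairs, which is the opposite sign pattern from the second good-product family, so Lemma \ref{symdiff} by itself does not reconcile the core with the target; the reconciliation in the paper requires the surviving $B'$-factor outside the rewriting, which your plan has already destroyed. So while the outline is right, the step that would have to be filled in is the bulk of the proof, and as written the route through Lemma \ref{zroworoct} applied to the full monomial does not go through.
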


\begin{ill}
Our goal is to use the relations between the $c_1(\lpmij)$ to express any monomial $[p] \in R$ as a sum of images of ``good'' collections of sections defined at the end of Section \ref{purple}. While we are working in $R$, we will illustrate monomials appearing in our argument with pictures of the form $\rho(x)$ takes in the vanishing loci of the corresponding sections $\spmijx$. Though $\rho(x) \in SO(2n+1)$, our illustrations will be $n\times n$ grids, in which we represent $2 \times 2$ blocks of the form $\left(\begin{array}{cc}x&-y\\y&x\end{array}\right)$ by \begin{tikzpicture} \node[draw, rectangle, red] at (0,0){$+$}; \end{tikzpicture}, the $2 \times 2$ blocks of the form $\left(\begin{array}{cc}z&w\\w&-z\end{array}\right)$ by \begin{tikzpicture} \node[draw, rectangle, blue] at (0,0){$-$}; \end{tikzpicture}, and the $2 \times 2$ blocks of zeros (in the vanishing loci of $(s^+_{ij}(x), s^-_{ij}(x)$) by \begin{tikzpicture} \node[draw, circle, violet] at (0,0){$0$}; \end{tikzpicture}. We have chosen to illustrate our argument using blocks $B$ with the property that $i<j$ $ \forall (i,j) \in B$, since these produce the simplest visualisations; note that this is not an assumption we make in the proof. Lemma \ref{monster} says we can express any monomial of degree $\geq 2gn(n-1)-n+1$ as a sum of terms whose corresponding sections have vanishing loci taking one of the forms shown in Figure \ref{monsterstatement}.
\begin{figure}
\includegraphics[width=\linewidth, trim=50 520 50 50, clip]{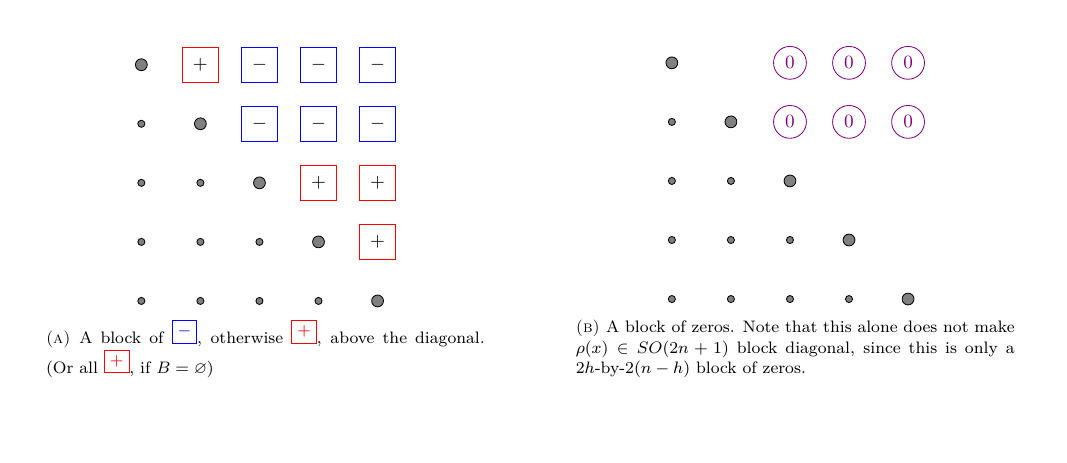}
\caption{}
\label{monsterstatement}
\end{figure}
\end{ill}

\begin{proof} By induction on $\vert X \vert$.

For $\vert X \vert = 2$, without loss of generality take $X = \{1,2\}$. Then 
\[
\qyx=\qqq[y^+_{12},y^-_{12},y^+_{21} ,y^-_{21}].
\]
Since $[y^+_{12}]=[y^+_{21}]$ and $[y^-_{12}]=[-y^-_{21}]$ in $R$, any monomial $p \in \qyx$ of degree at least $4g-1$ is equivalent in $R$ to $\lambda(y^+_{12})^a(y^-_{12})^b$, where $\lambda \in \qqq$ and $a+b \geq 4g-1$. So either $a \geq 2g$ or $b \geq 2g$. If $a \geq 2g$, take $\chi_\varnothing = (y^+_{12})^{a-2g}(y^-_{12})^b$, and $\psi_B=\chi_B=0$ for all $B \in \bx$. If $b \geq 2g$, take $\chi_{(1,2)}=(y^+_{12})^{a}(y^-_{12})^{b-2g}$, and $\psi_B=\chi_\varnothing =0$ for all $B \in \bx$. (\checkmark)

Now suppose $\vert X \vert = m \geq 3$. By Lemma \ref{oddsomez}, there is some $z \in X$ for which when we factorise $p$ as $q_z r_z$, with monomials $q_z \in \qyxmz$ and $r_z \in \qyzx$, the degree of $q_z$ is at least $2g(m-1)(m-2)-m+2$. By the inductive hypothesis, there exist homogeneous polynomials $\tilde{\psi_C}, \tilde{\chi_C}, \tilde{\chi_\varnothing} \in \qyxmz$, for all $C \in \bxmz$, such that 
\[
[q_z]= \left[ \sum_{C \in \bxmz}\tilde{\psi_C}\prod_{(i,j) \in C}(\ypij \ymij )^{2g} + \sum_{C \in \bxmz \cup \{ \varnothing \}} \tilde{\chi_C} \prod_{(i,j) \in C}(\ymij)^{2g}\prod_{\substack{(i,j) \in (X\setminus \{z\}) \times (X\setminus \{z\}) \setminus (C \cup \bar{C}) \\ i < j }}(\ypij)^{2g}\right].
\] 
We will consider these terms separately, as it suffices to show each term has the desired form.

Fix a block $C \in \bxmz$, say $C = \seh \times \sfw$ (so $h+w=m-1$). Consider the term
\begin{equation} \label{codim1zeros}
\left[ r_z \tilde{\psi_C}\prod_{(i,j) \in C}(\ypij \ymij)^{2g}\right]
\end{equation} in $[p]$ (see Figure \ref{codim1boz}). 
\begin{figure}
\includegraphics[width=\linewidth, trim=50 520 50 50, clip]{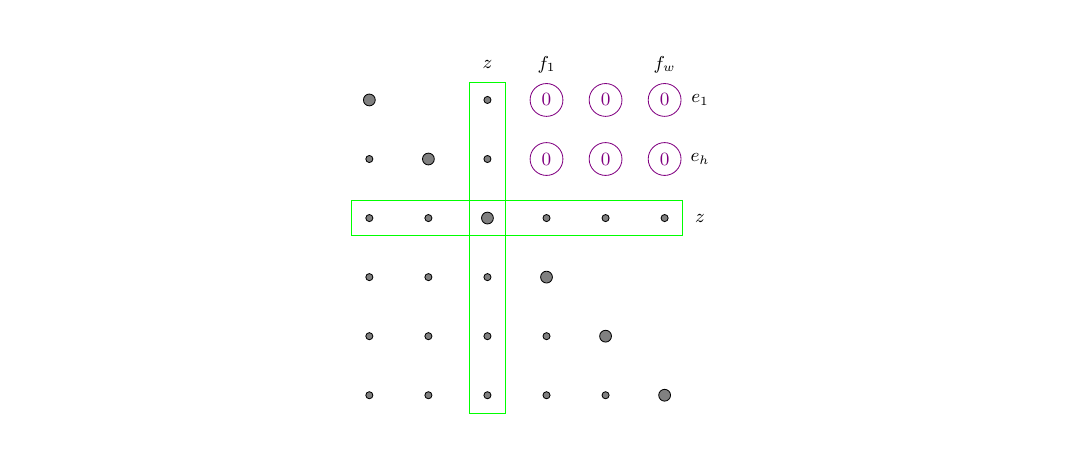}
\caption{The inductive hypothesis gave us a block of zeros in $\bxmz$, using a monomial of degree $4gwh$}
\label{codim1boz}
\end{figure}
Observe that $\deg r_z \tilde{\psi_C} \geq 2gm(m-1)-m+1-4gwh$. Since $[\ypij]=[y^+_{iz} + y^{-}_{zj}]$ and $[\ymij] = [y^-_{iz} + y^-_{zj}]$, each term of $r_z \tilde{\psi_C}$ can be expressed as a sum of terms of the form $[p_h p_w]$, where $p_h \in \qqq[Y(\{ \eeh, z\})]$ and $p_w \in \qqq[Y(\{\ffw, z\})]$ are monomials. By Lemma \ref{indind}, either $\deg p_h \geq 2gh(h+1)-h$, or $\deg p_w \geq 2gw(w+1)-w$ (see figure \ref{codim1bozwtriangles}). 
\begin{figure}
\includegraphics[width=\linewidth, trim=50 520 50 50, clip]{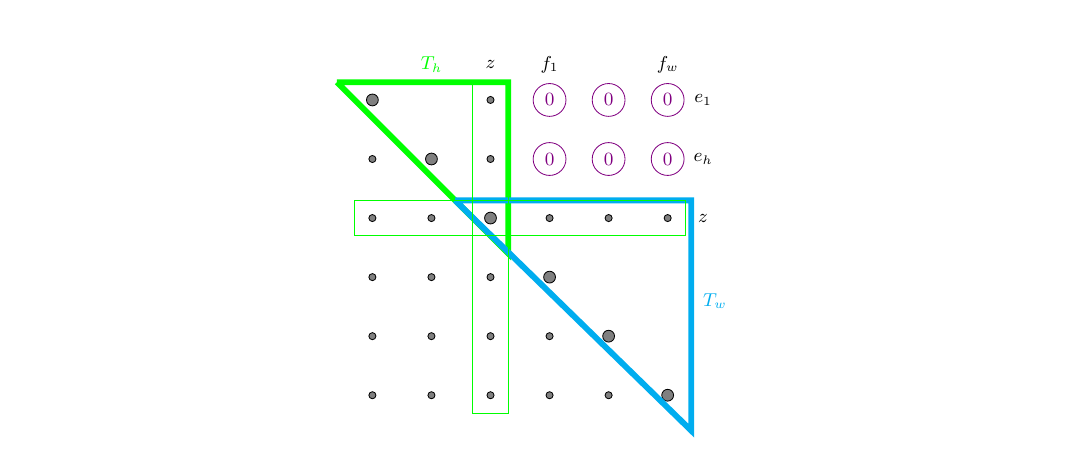}
\caption{We can apply the inductive hypothesis to one of these triangles $T_h$, $T_w$}
\label{codim1bozwtriangles}
\end{figure}
Consider one such monomial in the sum, and without loss of generality assume the former. By the inductive hypothesis, we can find homogeneous polynomials $\psi_D, \chi_D$ for each $D \in \behz$, together with $\chi_\varnothing$, with $\psi_D, \chi_D, \chi_\varnothing \in \qyehz$, such that
\[
[p_h] = \left[ \sum_{D \in \behz} \psi_D \prod_{(i,j) \in D}(\ypij \ymij)^{2g} + \sum_{D \in \behz \cup \{\varnothing \}}\chi_D \prod_{(i,j) \in D} (\ymij)^{2g} \prod_{\substack{i,j \in \sehz \\ (i,j) \notin D \cup \bar{D} \\ i < j}} (\ypij)^{2g} \right]
\] (see figure \ref{indhypto1tri}).
\begin{figure}
\includegraphics[width=\linewidth, trim=50 500 50 50, clip]{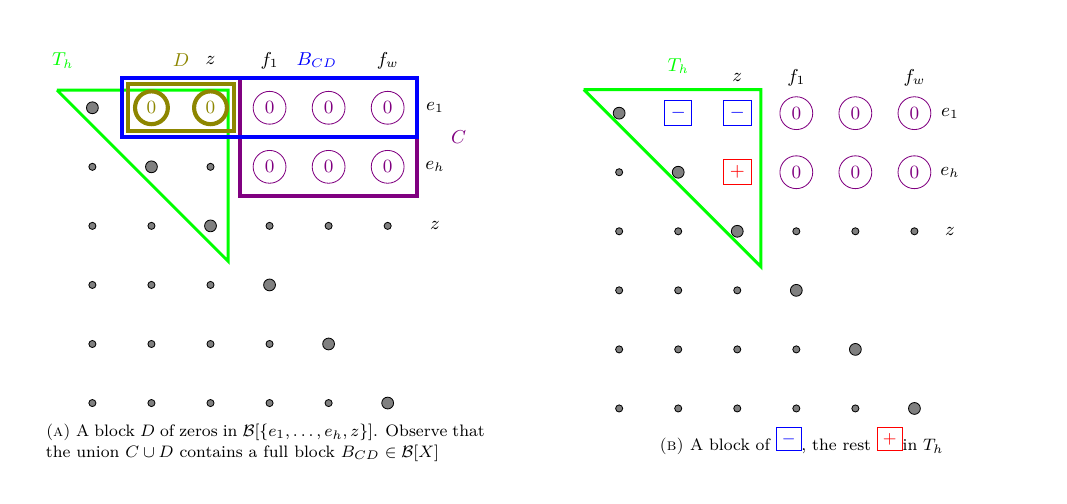}
\caption[Applying the inductive hypothesis to $T_h$]{Applying the inductive hypothesis to $T_h$, we find either a block of zeros, or a block of \squareminus with the rest \squareplus}
\label{indhypto1tri}
\end{figure}

Fix a block $D = \{e_1, \ldots e_d\}\times \{e_{d+1}, \ldots, e_h, z\}$ (some $1 \leq d \leq h$), and consider the term 
\[
\left[p_w \psi_D \prod_{(i,j) \in D }(\ypij \ymij)^{2g} \prod_{(i,j) \in C}(\ypij \ymij)^{2g}\right]
\] of $[p]$. By Lemma \ref{cd1sbextends}, there is a block $B_{CD} \in \bx$ with $B_{CD} \subset C\cup D \cup \bar{D}$, so our term is a multiple of $\prod_{(i,j) \in B_{CD}}(\ypij \ymij)^{2g}$ and hence has the desired form (see Figure \ref{indhypto1tri}A).

Now fix $D = \{e_1, \ldots, e_d\} \times \{e_{d+1}, \ldots, e_h,z\}$ where $0 \leq d \leq h$ (that is, allow $D$ to be empty), and consider the term
\[ 
\left[p_w \chi_D \prod_{(i,j) \in D}(\ymij)^{2g}\prod_{\substack{i,j \in \sehz \\ (i,j) \notin D \cup \bar{D} \\ i < j}}(\ypij)^{2g}\prod_{(i,j) \in C}(\ypij \ymij) ^{2g}\right]
\] of $[p]$ (see Figure \ref{indhypto1tri}B). 
By Lemma \ref{new}, we may express $[p_w \chi_D]$ as a sum of terms of the form $[\alpha_w \alpha_h]$, where $\alpha_w \in \qyfwz$ and $\alpha_h \in \qqq[Y^-_D(\sehz)]$. By Lemma \ref{indoroct}, either $\deg \alpha_w \geq 2gw(w+1)-w$ or $\deg \alpha_h \geq h(h+1)g -(h+1) +2$ (see figure \ref{octorindhyp}).
\begin{figure}
\includegraphics[width=\columnwidth, trim=50 480 50 50, clip]{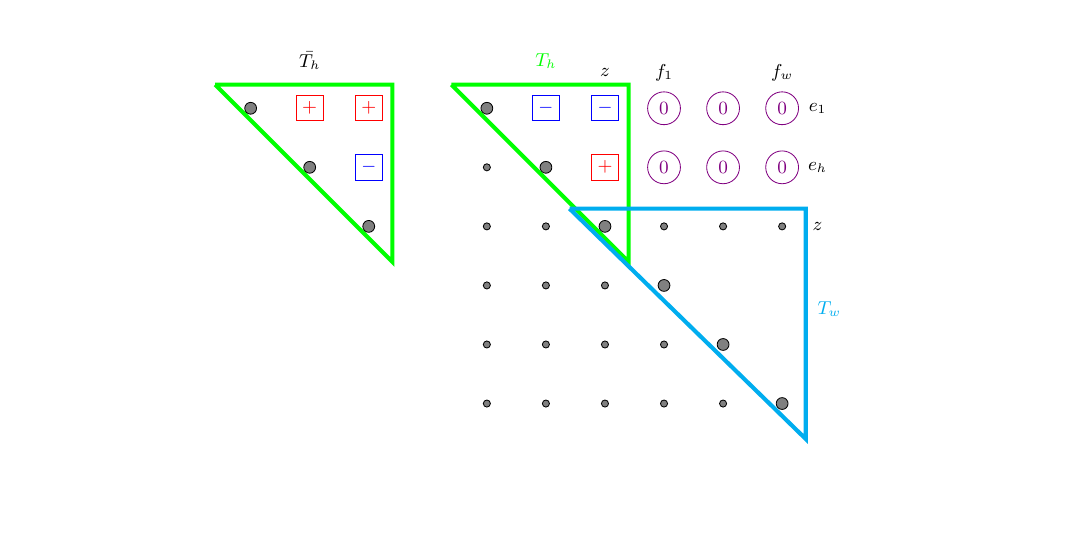}
\caption{We can apply either Corollary \ref{ajpropflipblock} to $\bar{T_h}$ (left), or the inductive hypothesis to $T_w$}
\label{octorindhyp}
\end{figure}
If $\deg \alpha_h \geq h(h+1)g -(h+1)+2$, then by Corollary \ref{ajpropflipblock}, for each block $E \in \behz$ we can find a monomial $\beta_E$ such that 
\[
[\alpha_h] = \left[ \sum_{E \in \bbb[\sehz]}\beta_E \prod_{(i,j) \in E}(y^{\epsilon_D(i,j)}_{ij})^{2g} \right].
\] 
\begin{figure}
\includegraphics[width=\linewidth, trim=50 520 50 50, clip]{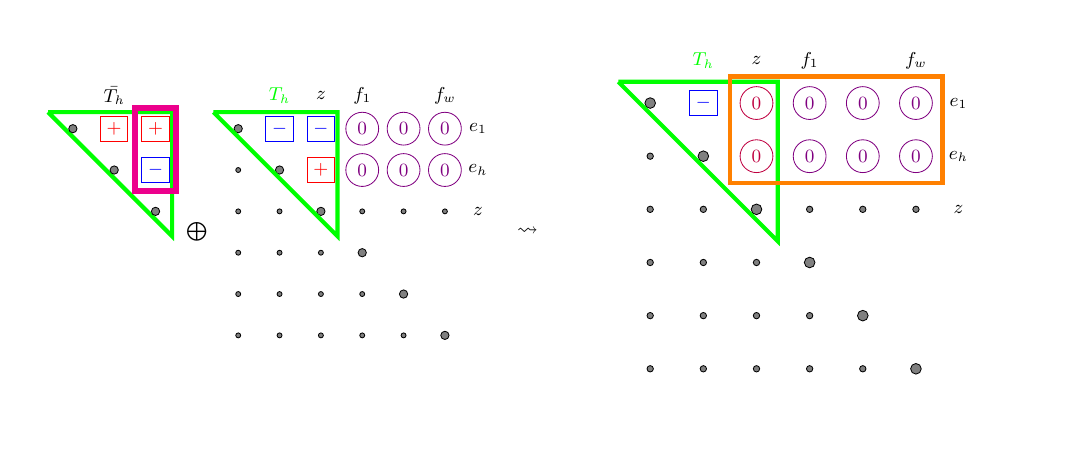}
\caption[Applying Corollary \ref{ajpropflipblock}]{Applying Corollary \ref{ajpropflipblock} to $\overline{T_h}$ gives a  block $E$ of \squareplus or \squareminus as shown in $\overline{T_h}$, which combines with the \squareplus and \squareminus in $T_h$ to get zeros everywhere in $E$. The union $C \cup E$ contains a full block $B_{CE} \in \bx$}
\label{blockint1makesblock}
\end{figure}
Then for fixed $E$, the term 
\[\left[\alpha_w \beta_E \prod_{(i,j)\in E}(y^{\epsilon_D(i,j)}_{ij})^{2g}\prod_{(i,j) \in D}(\ymij)^{2g}\prod_{\substack{i, j \in \sehz \\ (i,j) \notin D \cup \bar{D} \\ i<j}}(\ypij)^{2g}\prod_{(i,j) \in C}(\ypij \ymij ) ^{2g}\right]
\] of $[p]$ contains the factor 
\[\prod_{(i,j) \in E} ( \ypij \ymij)^{2g} \prod_{(i,j) \in C} (\ypij \ymij)^{2g}
\](up to replacing $y^+_{ji}$ with $\ypij$ whenever $(i,j) \in E$, $(i,j) \notin D \cup \bar{D}$, and $i<j$). By Lemma \ref{cd1sbextends}, $C \cup E \cup \bar{E}$ contains a block $B_{CE} \in \bx$, so this term has the desired form (see Figure \ref{blockint1makesblock}). 

Suppose instead that $\deg \alpha_w\geq 2gw(w+1)-w$. Then by the inductive hypothesis, 
\[[\alpha_w] = \left[ \sum_{F \in \bfwz} \theta_F \prod_{(i,j) \in F} (\ypij \ymij)^{2g} + \sum_{F \in \bfwz \cup \{\varnothing\}} \phi_F \prod_{(i,j) \in F}(\ymij)^{2g}\prod_{\substack{i,j \in \sfwz \\ (i,j) \notin F \cup \bar{F} \\ i<j}}(\ypij)^{2g} \right],
\] for some homogeneous polynomials $\theta_F$, $\phi_F \in \qyfwz$. 
\begin{figure}
\includegraphics[width=\linewidth, trim=50 450 50 50, clip]{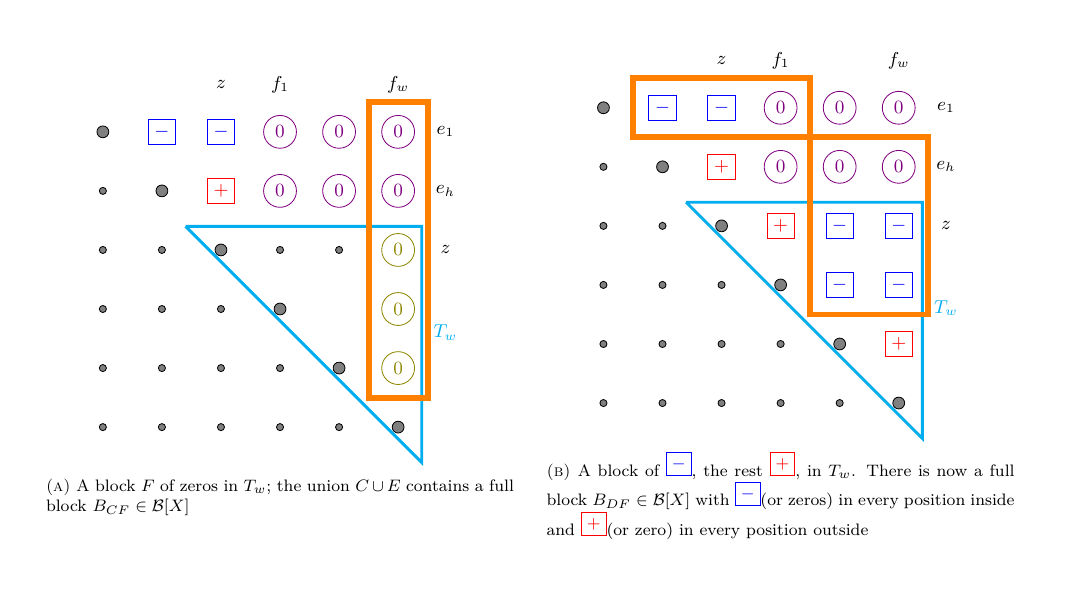}
\caption{Applying the inductive hypothesis in $T_w$ results in one of these two situations}
\label{caseonedone}
\end{figure}

Fix a block $F \in \bfwz$ and consider each term separately. By Lemma \ref{cd1sbextends}, $C \cup F \cup \bar{F}$ contains a block $B_{CF} \in \bx$, and so the term 
\[
\left[\alpha_h \theta_F \prod_{(i,j) \in F} (\ypij \ymij)^{2g} \prod_{(i,j) \in D}(\ymij)^{2g}\prod_{\substack{i,j \in \sehz \\ (i,j) \notin D \cup \bar{D} \\ i<j}} \prod_{(i,j) \in C}(\ypij \ymij)^{2g} \right]
\] of $[p]$ contains the factor $\prod_{(i,j) \in B_{CF}}(\ypij \ymij)^{2g}$ and hence is of the desired form (see Figure \ref{caseonedone}A).

By Lemma \ref{ebfbextendinb}, there exists a block $B_{DF} \in \bx$ with either $D \cup F \subset B_{DF} \subset D \cup F \cup C \cup \bar{C}$, or $\bar{D} \cup F \subset B_{DF} \subset \bar{D} \cup F \cup C \cup \bar{C}$, and thus the term 
\[
\left[ \alpha_h \phi_F \prod_{(i,j) \in F}(\ymij)^{2g}\prod_{\substack{i,j \in \sfwz \\ (i,j) \in F \cup \bar{F} \\ i < j}}(\ypij)^{2g} \prod_{(i,j) \in D}(\ymij)^{2g} \prod_{\substack{i,j \in \sehz \\ (i,j) \notin D \cup \bar{D} \\ i<j}}(\ypij)^{2g} \prod_{(i,j) \in C} (\ypij \ymij)^{2g}\right]
\] of $[p]$ contains the factor $\prod_{(i,j) \in B_{DF}}(\ymij)^{2g}\prod_{\substack{i,j \in X \\ (i,j) \notin B_{DF} \cup \bar{B_{DF}} \\ i<j}}(\ypij)^{2g}$, and hence is of the desired form (see Figure \ref{caseonedone}B).
Hence each term $ \left[ r_z \tilde{\psi_C} \prod_{(i,j ) \in C}(\ypij \ymij)^{2g} \right]$ has a representative in $\qyx$ of the desired form. 

It remains to show that  
\[ \left[ r_z \sum_{C \in \bxmz \cup \{\varnothing\}} \tilde{\chi_C} \prod_{(i,j) \in C}(\ymij)^{2g} \prod_{\substack{i,j \in \xmz \\ (i,j) \notin C \cup \bar{C} \\ i<j}}(\ypij)^{2g} \right]
\] (Figure \ref{codim1bomrp}) has a representative in $\qyx$ of the desired form; again it suffices to show this for each term separately.
\begin{figure}
\includegraphics[width=\linewidth, trim=50 520 50 50, clip]{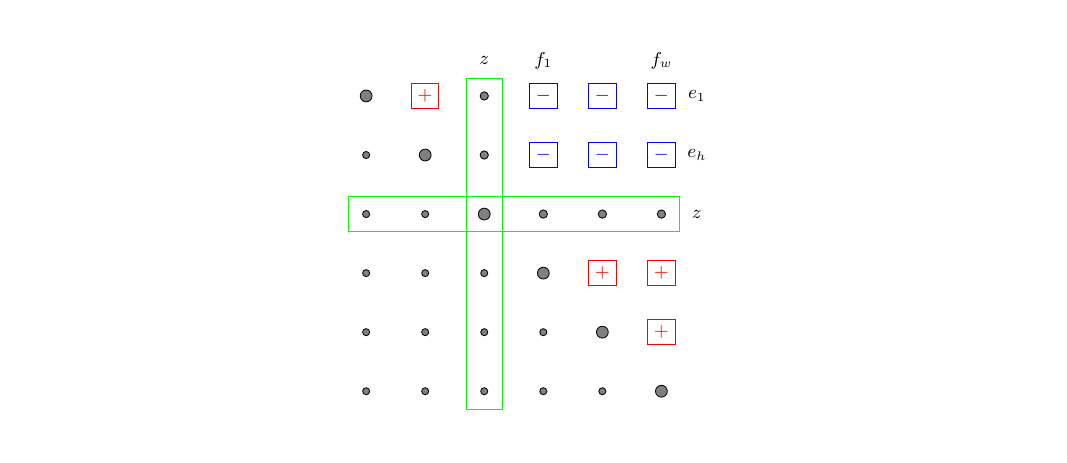}
\caption[The result of the inductive hypothesis]{The inductive hypothesis gave us a block $C \in \bxmz$of \squareminus, and \squareplus everywhere else above the diagonal and away from $z$}
\label{codim1bomrp}
\end{figure}

Fix $C = V \times ((\xmz) \setminus V) \in \bxmz$ and consider 
\begin{equation}\label{rzchitildeetc}
r_z \tilde{\chi_C} \prod_{(i,j) \in C}(\ymij)^{2g}\prod_{\substack{i,j \in \xmz \\ (i,j) \notin C \cup \bar{C} \\ i<j}}(\ypij)^{2g}.\end{equation}
Observe that $\deg r_z \tilde{\chi_C} \geq 2gm(m-1)-m+1 -g(m-1)(m-2)$. By Lemma \ref{zroworoct}, $[r_z \tilde{\chi_C}]$ has a representative in $\qyx$ that is a sum of terms of the form $q \prod_{i \in \xmz}(y^{-\epsilon_V(i)}_{iz})^{d_i}$, where either $\deg q \geq m(m-1)g-m+2$, or $d_i \geq 2g$ $ \forall i \in \xmz$ (Figure \ref{case2done}).
  
\begin{figure}
\includegraphics[width=\linewidth, trim=50 480 50 50, clip]{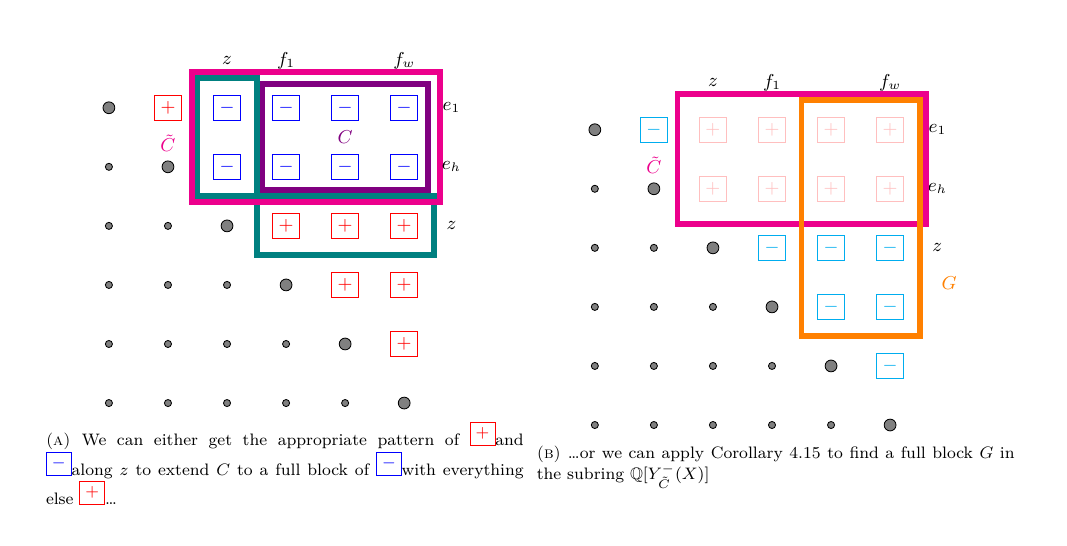}
\caption{Lemma \ref{zroworoct} puts us in one of these situations}
\label{case2done}
\end{figure}

In the latter case, the corresponding terms of $[p]$ have the factor 
\[
\prod_{(i,j) \in C}(\ymij)^{2g}\prod_{\substack{i,j \in \xmz \\ (i,j) \notin C \cup \bar{C} \\ i<j}}(\ypij)^{2g}\prod_{i \in \xmz}(y^{-\epsilon_V(i)}_{iz})^{2g} = \prod_{(i,j) \in \tilde{C}}(\ymij)^{2g}\prod_{\substack{i,j \in X \\ (i,j) \notin \tilde{C} \cup \bar{\tilde{C}}\\ i<j}}(\ypij)^{2g},
\]where $\tilde{C} = V \times (X\setminus V)$ is the extension of the block $C$ obtained by adding $z$ to the second factor. Hence these terms have the desired form (see Figure \ref{case2done}A). 

If a term $q \prod_{i \in \xmz}(y^{-\epsilon_V(i)}_{iz})^{d_i}$ has $\deg q \geq m(m-1)g-m+2$, then by Corollary \ref{ajpropflipblock} there exist homogeneous polynomials $\gamma_G$, for each $G \in \bx$, such that 
\[
[q] = \left[ \sum_{G \in \bx} \gamma_G \prod_{(i,j) \in G} (y^{\epsilon_{\tilde{C}}(i,j)}_{ij})^{2g}\right]
\]
(see Figure \ref{case2done}B). Then each such term \[\prod_{(i,j) \in C}(\ymij)^{2g}\prod_{\substack{i,j \in \xmz \\ (i,j) \notin C \cup \bar{C} \\ i<j}}(\ypij)^{2g}\gamma_G\prod_{(i,j) \in G}(y^{\epsilon_{\tilde{C}}(i,j)}_{ij})^{2g}\] in \eqref{rzchitildeetc} contains the factor $\prod_{(i,j) \in G \vert_{\xmz}}(\ypij \ymij)^{2g}$ (up to replacing $y^+_{ji}$ by $\ypij$ whenever $(i,j) \notin \tilde{C} \cup \bar{\tilde{C}}$, $(i,j) \in G$ and $i>j$). By Remark \ref{restrictblock}, $G \vert_{\xmz} \in \bxmz$, and thus we are reduced to a monomial of the form \eqref{codim1zeros} (Figure \ref{case2now1}), which we dealt with above.

Thus we have found a representative for $[p]$ in $\qyx$ that has the desired form.
\begin{figure}
\includegraphics[width=\linewidth, trim=50 520 50 50, clip]{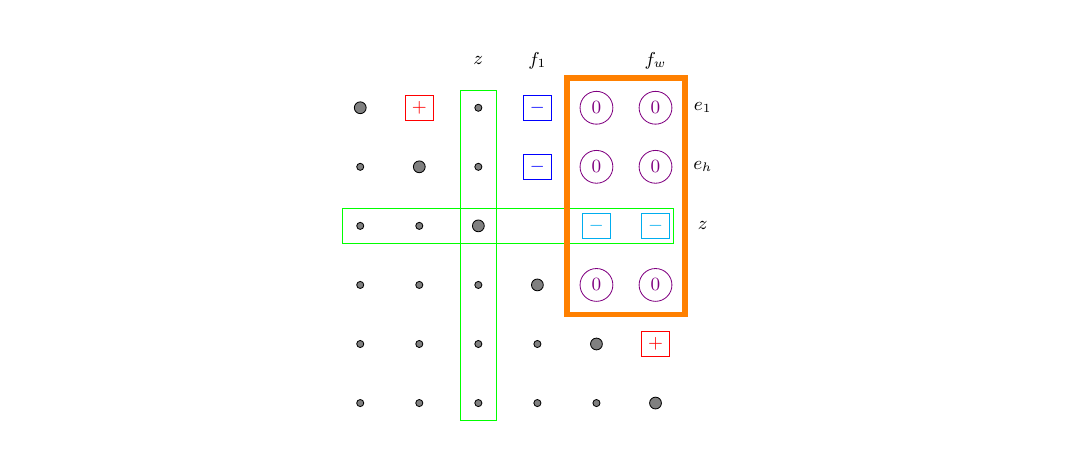}
\caption[Combining the block $G$ with ]{Combining the block $G$ (Figure \ref{case2done}B) with \squareminus everywhere in $C$ and \squareplus elsewhere away from $z$ (Figure \ref{codim1bomrp}) gives a block of zeros in $\bxmz$, reducing us to the earlier case (Figure \ref{codim1boz})}
\label{case2now1}
\end{figure}
\end{proof}

\begin{lem} \label{octandonediag} Let $B \in \mathcal{B}[[n]]\cup \{ \varnothing \}$ be a block (or the empty set), let $p \in \qyx$ be a monomial, and fix $1 \leq i \leq n$. Then $[p]$ is equivalent in $R$ to a sum of terms of the form $[qr]$, where $q \in \qqq[y^+_{ii}]$ and $r \in \qymbx$. 
\end{lem}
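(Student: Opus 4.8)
The plan is to reduce to the case $B=\varnothing$ and then to show that, modulo $I$, the classes of all generators of $\qyx$ already lie in the polynomial subring $A_\varnothing:=\qqq[\{y^+_{ii}\}\cup Y^-(X)]\subseteq\qyx$. Since an element of $A_\varnothing$ is precisely a finite sum $\sum_\ell q_\ell r_\ell$ with $q_\ell\in\qqq[y^+_{ii}]$ and $r_\ell\in\qymx$, and since $\qymbx=\qymx$ when $B=\varnothing$, this is exactly the lemma in that case.

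For the reduction I would invoke the isomorphism $f_B$ of $\qyx$ introduced in the remark preceding Corollary~\ref{ajpropflipblock}: it is a ring isomorphism sending each generator of $\qyx$ to $\pm$ a generator, it descends to an automorphism of $R$, and it restricts to an isomorphism $\qymx\to\qymbx$. Moreover, since $B=V\times V^c$ forces $(i,i)\notin B\sqcup\bar B$, we have $f_B(y^+_{ii})=\pm y^+_{ii}$, so $f_B$ carries $A_\varnothing$ isomorphically onto $A_B:=\qqq[\{y^+_{ii}\}\cup Y^-_B(X)]$. Consequently, if the class of every monomial lies in the image $[A_\varnothing]$ of $A_\varnothing$ in $R$, then for a monomial $p$ we apply this to $f_B^{-1}(p)$ (a scalar times a monomial), write $[f_B^{-1}(p)]=[a]$ with $a\in A_\varnothing$, and get $[p]=[f_B(a)]$ with $f_B(a)\in A_B$; expanding $f_B(a)$ as $\sum_\ell q_\ell r_\ell$ with $q_\ell\in\qqq[y^+_{ii}]$ and $r_\ell\in\qymbx$ yields the statement for $B$. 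So it suffices to treat $B=\varnothing$.

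For $B=\varnothing$, note that the image $[A_\varnothing]\subseteq R$ is a subring, so since $R$ is generated by the classes of the generators of $\qyx$ it is enough to check that each such class lies in $[A_\varnothing]$. The generators $y^-_{jk}$ with $j\neq k$ lie in $A_\varnothing$ outright, so only the $y^+_{jk}$ need attention, and here I would read off three identities from the defining relations of $I$: the relation $y^+_{ii}-y^+_{ik}+y^-_{ki}$ gives $[y^+_{ik}]=[y^+_{ii}]+[y^-_{ki}]$ for $k\neq i$; substituting this into the relation $y^+_{ij}-y^+_{jj}+y^-_{ji}$ gives $[y^+_{jj}]=[y^+_{ii}]+2[y^-_{ji}]$ for $j\neq i$; and for distinct $j,k$ with $j,k\neq i$, the relation $y^+_{jk}-y^+_{ki}+y^-_{ij}$ together with $[y^+_{ki}]=[y^+_{ik}]$ and $[y^-_{ij}]=-[y^-_{ji}]$ gives $[y^+_{jk}]=[y^+_{ii}]+[y^-_{ki}]+[y^-_{ji}]$. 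In every case $[y^+_{jk}]$ is the class of an element of $A_\varnothing$, so $[A_\varnothing]=R$; in particular $[p]\in[A_\varnothing]$ for every monomial $p$, which is the desired conclusion.

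The argument is short, and the only point requiring care — which I would flag explicitly — is that the first two relations above involve a repeated index (the "triples" $(i,i,k)$ and $(i,j,j)$), so one must confirm these are genuinely among the generators of $I$; they are, because once $i\neq j$ and $i\neq k$ every $y$-symbol occurring ($y^+_{ii},y^+_{ik},y^+_{ij},y^+_{jj},y^-_{ki},y^-_{ji}$) is a legitimate element of $Y(X)$. With that checked the plan goes through with no further obstacle; in effect the content of the lemma is just the observation that $\{y^+_{ii}\}\cup Y^-_B(X)$ generates $R$, which the three displayed identities establish.
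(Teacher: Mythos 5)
Your proof is correct, but it takes a noticeably different (and more self-contained) route than the paper's. The paper's proof is a two-line citation: it invokes the generating-set observation established \emph{inside} the proof of Lemma~\ref{zroworoct} (that $[\ymbx]$ together with $[y^{-\epsilon_B(i,z)}_{iz}]$ for a single $z \neq i$ already generate $R$) and then swaps $[y^{-\epsilon_B(i,z)}_{iz}]$ for $[y^+_{ii}]$ via the single relation $[y^{\epsilon_B(i,z)}_{iz}+y^{-\epsilon_B(i,z)}_{iz}]=[2y^+_{ii}]$. You instead first conjugate by the automorphism $f_B$ to reduce to $B=\varnothing$ (correctly noting $f_B(y^+_{ii})=\pm y^+_{ii}$ since $(i,i)\notin B\sqcup\bar B$), and then verify from scratch that $\{[y^+_{ii}]\}\cup[Y^-(X)]$ generates $R$ by expressing every $[y^+_{jk}]$ explicitly: your three identities are correct (I checked each against the generators of $I$, including the degenerate triples $(i,i,k)$ and $(i,j,j)$, which are indeed legitimate since no forbidden $y^-_{ll}$ occurs). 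The trade-off: the paper's proof is shorter but depends on having read the body of an earlier proof, whereas yours is self-contained and makes the generating set transparent; the reduction via $f_B$ is a nice touch but is arguably unnecessary, since your three identities could be rewritten directly for general $B$ with the same amount of work. Either way the argument is sound.
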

\begin{proof}We must check that the images in $R$ of the elements of $\ymbx$ together with $[y^+_{ii}]$ generate $R$. We showed in the proof of Lemma \ref{zroworoct} that the images of elements in $\ymbx$ together with $[y^{-\epsilon_B(i,z)}_{iz}]$ (for some $z \neq i$ generate $R$, so it suffices to show that $[y^{-\epsilon_B(i,z)}_{iz}] \in \langle [\ymbx], [y^+_{ii}]\rangle$. But this is clear, because $y^{\epsilon_B(i,z)}_{iz} \in \ymbx$, and $[y^{\epsilon_B(i,z)}_{iz}+y^{-\epsilon_B(i,z)}_{iz}]=[y^+_{ii}]$.
\end{proof}

\begin{notn} If $p \in \qyx$ is a monomial, let $d^\pm_{ij}$ (for $i,j \in X$) be the integers such that $p=\lambda\prod_{i,j \in X}(\ypmij)^{d^\pm_{ij}}$ (where $\lambda \in \qqq$), and define $d_{ij}(p):=d^+_{ij}+d^-_{ij}+d^+_{ji}+d^-_{ji}$. 
\end{notn}

\begin{lem} \label{procedure} Let $X=[n]$ and let $p \in \qyx$ be a monomial of degree at least $2gn^2+\frac{1}{2}(n-1)(n-2)-2gn-n(n-1)g$. Let $C \in \bx \cup \{\varnothing\}$ be a fixed block (or the empty set). Then we can find a (finite) set $\mathcal{D}$ whose elements are $2g$-tuples of (possibly empty) unions of blocks in $\bx$, and homogeneous polynomials $\phi_D$ for each $D \in \mathcal{D}$ and $\theta_B$ for each $B \in \bx$, such that 
\[[p]= \left[\sum_{B \in \bx} \theta_B \prod_{(i,j) \in B} (y^{\epsilon_C(i,j)}_{ij})^{2g} + \sum_{D \in \mathcal{D}}\left(\phi_D\prod_{m=1}^{2g}\left(\prod_{(i,j) \in D_m}y^{\epsilon_C(i,j)}_{ij}\prod_{\substack{(i,j) \notin D_m \cup \bar{D_m} \\ i<j}}y^{-\epsilon_C(i,j)}_{ij}\right)\right) \right].
\]
\end{lem}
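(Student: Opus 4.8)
The plan is to prove Lemma~\ref{procedure} by induction on $n=\vert X\vert$, running essentially the same machine as in the proof of Lemma~\ref{monster} but carrying the fixed block $C$ through every reduction. The guiding picture is that the subring $\qqq[Y^-_C(X)]$ generated by the ``$C$-aligned'' classes $\yecij$ is exactly an $SU$-type polynomial ring, on which Corollary~\ref{ajpropflipblock} (with $a=2g$) produces precisely the type-one monomials $\theta_B\prod_{(i,j)\in B}(\yecij)^{2g}$; the relations of $R$ let one rewrite any monomial of $\qyx$ in terms of the $\yecij$, a single diagonal class $[y^+_{ii}]$, and the ``$C$-misaligned'' classes $\ymecij$, and the type-two monomials of the lemma are exactly what one is forced into when the degree accumulated on the diagonal and misaligned classes is too large to absorb into the $SU$-subring.

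For the low-rank base cases one argues directly. Use Lemma~\ref{octandonediag} with the block $C$ and a fixed index $i$ to write $[p]$ as a sum of monomials $(y^+_{ii})^{e}\,r$ with $r\in\qqq[Y^-_C(X)]$; to every term with $\deg r\geq gn(n-1)-n+2$ apply Corollary~\ref{ajpropflipblock} with $a=2g$, yielding type-one terms $\theta_B\prod_{(i,j)\in B}(\yecij)^{2g}$. For the finitely many remaining terms almost all of the degree sits in the diagonal factor $(y^+_{ii})^{e}$; expanding $[y^+_{ii}]=[\yecij+\ymecij]$ over varying indices $j$ and collecting shows these are sums of type-two monomials. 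The hypothesis degree, which simplifies to $2gn^2+\tfrac{1}{2}(n-1)(n-2)-2gn-n(n-1)g=gn(n-1)+\tfrac{1}{2}(n-1)(n-2)$, is calibrated so that in each branch either $r$ meets the threshold of Corollary~\ref{ajpropflipblock} or $e$ is large enough to supply a full $C$-misaligned pattern in each of the $2g$ slots.

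In the inductive step, apply Lemma~\ref{oddsomez} to choose $z\in X$ and factor $p=q_zr_z$ with $\deg q_z$ large enough to apply the inductive hypothesis to $q_z\in\qqq[Y(\xmz)]$ with the restricted block $C|_{\xmz}$ (Remark~\ref{restrictblock}); this writes $[q_z]$ as a sum of type-one and type-two terms for $\xmz$. For each such term, multiply back by $r_z$ and reincorporate the index $z$: using $[\ypij]=[y^+_{iz}+y^-_{zj}]$ and $[\ymij]=[y^-_{iz}+y^-_{zj}]$, split the free factor together with $r_z$ into a piece on $\{\eeh,z\}$ and a piece on $\{\ffw,z\}$, where $\seh\times\sfw$ is the block governing the term. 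The pigeonhole Lemmas~\ref{indind}, \ref{indoroct} and~\ref{zroworoct} then guarantee that one of the two pieces carries enough degree to apply either the inductive hypothesis again or Corollary~\ref{ajpropflipblock}, and the block-merging Lemmas~\ref{cd1sbextends}, \ref{ebfbextendinb} and~\ref{symdiff} convert each partial sub-block (or symmetric difference of sub-blocks) produced in this way, together with the ambient $C$-aligned pattern and the factor previously set aside, into a genuine block of $\bx$; thus every term ends up in one of the two advertised shapes.

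The step I expect to be the main obstacle, exactly as in the proof of Lemma~\ref{monster}, is the bookkeeping for the type-two outcomes. When the inductive hypothesis on $\xmz$ returns a ``$Q_n$-type'' term rather than a full block, one must argue either that the $z$-row and $z$-column data produced by Lemma~\ref{zroworoct} extend every union $D_m$ to a union of blocks of $\bx$ (reducing to the full-block case already treated), or that combining the returned pattern with the fixed block $C$ and with the freshly created sub-block yields a full block of $(\yecij)^{2g}$, via Lemma~\ref{symdiff}. Keeping the sign conventions $\epsilon_C(i,j)$ and $\epsilon_V(i)$ consistent as one passes between $X$, $\xmz$, and the subsets $\{\eeh,z\}$, $\{\ffw,z\}$, and checking at every stage that the restricted and merged blocks remain inside $\bx$, is the delicate point; once that is in hand, the inequalities of Lemmas~\ref{indind}, \ref{indoroct} and~\ref{zroworoct} close the induction.
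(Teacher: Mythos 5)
Your proposal runs the proof of Lemma~\ref{monster} over again — induction on $n$, choose $z$ via Lemma~\ref{oddsomez}, factor $p = q_z r_z$, apply the inductive hypothesis and the pigeonhole/block-merging lemmas — but this is not the paper's argument for Lemma~\ref{procedure}, and it cannot be made to work here. The immediate obstruction is the degree hypothesis: Lemma~\ref{oddsomez} requires $\deg p \geq 2gn(n-1)-n+1$, whereas the hypothesis of Lemma~\ref{procedure} only guarantees $\deg p \geq gn(n-1)+\tfrac{1}{2}(n-1)(n-2)$, which is smaller by roughly $gn(n-1)$. So the very first step of your inductive argument — invoking Lemma~\ref{oddsomez} — is unavailable; $p$ is far too low in degree for that machinery to get started. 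It is not an accident that Lemma~\ref{monster} is stated for the larger threshold $2g\vert X\vert(\vert X\vert-1)-\vert X\vert +1$, while Lemma~\ref{procedure} deliberately targets monomials of about half that degree.

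The deeper issue is that the target output of Lemma~\ref{procedure} is structurally different from that of Lemma~\ref{monster}, so an induction on $n$ modelled on that proof has no natural way to produce it. Lemma~\ref{monster} outputs sums of $2g$-th \emph{powers} of block patterns; Lemma~\ref{procedure} outputs sums in which the second kind of term is a product over $m=1,\dots,2g$ of \emph{single} copies, with a possibly different union of blocks $D_m$ in each slot. Your sketch never explains how the inductive step would manufacture this $2g$-tuple structure: extending unions of blocks in $\mathcal{B}[X\setminus\{z\}]$ to unions of blocks in $\mathcal{B}[X]$ across $2g$ independent slots requires an independent $z$-row/column pattern for each slot, and neither Lemma~\ref{zroworoct} nor the block-merging Lemmas~\ref{cd1sbextends}, \ref{ebfbextendinb}, \ref{symdiff} provide $2g$ such independent choices.

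The paper's actual proof is not inductive on $n$ at all. It is a direct, terminating rewriting procedure. One first factorises $p = p_- p_+$ with $p_- \in \qqq[Y^-_C(X)]$, $p_+ \in \qqq[Y^+_C(X)]$; if $\deg p_-$ already reaches the threshold $n(n-1)g - n + 2$, Corollary~\ref{ajpropflipblock} with $a = 2g$ finishes. Otherwise one applies Corollary~\ref{ajpropflipblock} with the largest admissible $a < 2g$, and then iterates a four-step loop: squeeze more block-patterned degree into the $Y^-_C$ part, apply Corollary~\ref{ajpropflipblock} each time the $Y^-_C$-degree passes the lower threshold $\tfrac{1}{2}n(n-1)-n+2$, append each resulting block as a new $D_m$ slot (or merge it into an existing $D_m$ once all $2g$ slots are filled), and, when the $Y^-_C$-degree is still too low, use the relation $[y^{\epsilon_C(k,l)}_{kl}] = [y^{\epsilon_C(k,i)}_{ki}+y^{-\epsilon_C(i,j)}_{ij}+y^{\epsilon_C(l,j)}_{lj}]$ to transfer one unit of degree from an over-supplied index pair to an under-supplied one. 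Termination is proved by exhibiting monovariants (the count $b$ of blocks placed into the $D_m$, the degree $\deg(\alpha\beta)$, and the deficit $d = \sum_{d_{ij}<2g}(2g-d_{ij})$), each of which moves monotonically toward one of the finitely many stopping conditions. You would need to reconstruct this direct bootstrapping mechanism — your proposed induction on $n$ has both a broken first step and no route to the $2g$-tuple shape.
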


\begin{proof} First factorise $p$ as $p_-p_+$, where $p_- \in \qqq[Y^-_C(X)]$ and $p_+ \in \qqq[Y^+_C(X)]$. If $\deg p_- \geq n(n-1)g-n+2$, we are done by Corollary \ref{ajpropflipblock}. Otherwise, let $a <2g$ be the largest integer such that $\deg p_- \geq \frac{1}{2}n(n-1)a-n+2$, and write 
\[[p_-] = \left[\sum_{B \in \bx} \psi_B \left(\prod_{m=1}^a \prod_{(i,j) \in B} y^{\epsilon_C(i,j)}_{ij}\right) \right]
\]
(which is possible by Corollary \ref{ajpropflipblock}). Each monomial in the resulting polynomial has the form $\alpha \beta \gamma$, where $\alpha \in \qqq[Y^-_C(X)]$, $\beta = \prod_{m=1}^s (\prod_{(i,j) \in D_m}y^{\epsilon_C(i,j)}_{ij})\in \qqq[Y^-_C(X)]$ for some $0 \leq s \leq 2g$ and unions $D_m$ of blocks in $\bx$, and $\gamma \in \qqq[Y^+_C(X)]$. Consider monomials of this form. 

Case 1: If $\deg(\alpha \beta) \geq n(n-1)g-n+2$, then $[\alpha \beta \gamma]$ has a representative of the desired form by Corollary \ref{ajpropflipblock}.

Case 2: If $s=2g$ and there is a block $B \in \bx$ such that $B \subseteq D_m$ for every $1 \leq m \leq 2g$, then $\beta$ contains the factor $\prod_{(i,j) \in B}(y^{\epsilon_C(i,j)}_{ij})^{2g}$, so $\alpha \beta \gamma$ is of the desired form.

Case 3: Observe further that for each $i < j$, we know $\yecij \in \qqq[Y^-_C(X)]$ and $\ymecij \in \qqq[Y^+_C(X)]$. Thus, if $d_{ij}(\beta \gamma)\geq 2g$ $\forall i \neq j$, then swapping $\ypij$ with $y^+_{ji}$ or $\ymij$ with $-y^-_{ji}$ as necessary and taking $D_m = \varnothing$ for $s+1 \leq m \leq 2g$, we may write 
\[[\beta \gamma] = \left[\lambda  \prod_{m=1}^{2g}\left( \prod_{(i,j) \in D_m} \yecij \prod_{\substack{(i,j) \notin D_m \cup \bar{D_m}\\i<j}}\ymecij \right) \right],
\]
thus finding a representative for $[\alpha \beta \gamma]$ of the desired form. 

The following procedure takes monomials of degree at least $2gn^2+ \frac{1}{2}(n-1)(n-2)-2gn-n(n-1)g$, and finds representatives for them as sums of monomials, each of which falls into one of the three cases above.

Start with a monomial $\alpha \beta \gamma$ of degree $\geq 2gn^2+\frac{1}{2}(n-1)(n-2)-2gn-n(n-1)g$, where $\alpha \in \qqq[Y^-_C(X)]$, $\beta = \prod_{m=1}^s(\prod_{(i,j) \in D_m}\yecij)$ for some $0 \leq s \leq 2g$ and unions $D_m$ of blocks in $\bx$, and $\gamma \in \qqq[Y^+_C(X)]$. 
\begin{enumerate} 
	\item \label{applycor} If $\deg(\alpha\beta )\geq n(n-1)g-n+2$, we are in Case 1. Stop. Otherwise, go to Step 2. 
	\item \label{checkalpha} If $\deg(\alpha) \geq \frac{1}{2}n(n-1)-n+2$, go to Step \ref{hitalpha}. Otherwise, go to Step 4. 
	\item \label{hitalpha} Apply Corollary \ref{ajpropflipblock} to write $[\alpha]=\left[\sum_{B \in \bx}\phi_B\prod_{(i,j) \in B} \yecij\right]$. Consider each term in the resulting polynomial separately.
	\begin{enumerate}
		\item If $s<2g$: Replace $\alpha$ with $\phi_B$. Set $D_{s+1}:=B$. Replace $\beta$ with $\beta\cdot \prod_{(i,j) \in B}\yecij$. Go to Step \ref{checkalpha}. 
		\item If $s=2g$: 
		\begin{enumerate} 
			\item If $B \subseteq D_m$ for all $1 \leq m \leq 2g$, then we are in Case 2. Stop. 
			\item Otherwise, pick $1 \leq m \leq 2g$ with $B \not\subseteq D_m$. Replace $D_m$ with $B \cup D_m$ in $\beta$. Go to Step \ref{checkalpha}. 
		\end{enumerate} 
	\end{enumerate}
	\item 
	\begin{enumerate}
		\item If $d_{ij}(\beta \gamma)\geq 2g$ for all $i<j$, we are in Case 3. Stop. 
		\item \label{moveinaxles} Otherwise, pick $i<j$ with $d_{ij}(\beta \gamma) < 2g$. We know $\deg(\alpha) \leq \frac{1}{2}n(n-1)-n+1$, whilst $\deg (\alpha \beta \gamma) \geq 2gn^2+\frac{1}{2}(n-1)(n-2)-2gn-n(n-1)g$, so $\deg(\beta \gamma) \geq n(n-1)g=(2g)\binom{n}{2}$. Thus there must be some $k,l \in X$ with $d_{kl}(\beta \gamma) > \begin{cases} 2g & k \neq l,\\ 0 & k=l \end{cases}$ and $y^{-\epsilon_C(k,l)}_{kl}$ must be a factor of $\gamma$. Replace one instance of $y^{-\epsilon_C(k,l)}_{kl}$ in $\gamma$ with $y^{\epsilon_C(k,i)}_{ki}+y^{-\epsilon_C(i,j)}_{ij}+y^{\epsilon_C(l,j)}_{l,j}$ (the two are equivalent in $R$), and treat each term in the resulting polynomial separately. Go to Step \ref{applycor}.
	\end{enumerate}
\end{enumerate}
Observe that Step \ref{hitalpha} increases the number $b$ of blocks (counted with multiplicity) in $\cup_{r=1}^sD_r$, and Step \ref{moveinaxles} either increases $\deg(\alpha\beta)$ or decreases $d:=\sum_{\substack{i \neq j \\ d_{ij} < 2g}}(2g-d_{ij})$, for each term in the polynomials these steps create. None of the steps decrease $b$ or $\deg(\alpha\beta)$, or increase $d$.  If $\deg(\alpha \beta) \geq n(n-1)g-n+2$, or $d=0$, or $b$ gets large enough to force every $D_m$ to contain some common block $B$ ($b \geq (2g-1)\vert \bx \vert +1$ will do), then the algorithm terminates. So even when a step results in a polynomial with more than one term, there are still only finitely many terms, each of which are also closer to a terminating condition than the previous monomial. So this procedure terminates in finite time, giving us a representative of the desired form for any monomial of sufficient degree.
\end{proof}

\begin{prop}\label{alltogethernow} Let $ X \subset \nnn$ with $\vert X \vert = n$. Let $p \in \qyx$ be a homogeneous polynomial of degree at least $2gn^2 + \frac{1}{2}(n-1)(n-2)$. Then for each $B = V \times V^c \in \bx$ and each $C \in \bx \cup \{\varnothing\}$, we can find \begin{itemize} \item homogeneous polynomials $\theta_B, \phi_B, \psi_C \in \qyx$ \item elements $p_V \in \mathcal{A}_{\vert V \vert}$ and $p_{V^c} \in \mathcal{A}_{\vert V^c \vert}$ \item bijections $f_V:[\vert V \vert ] \to V$ and $f_{V^c}:[\vert V^c\vert] \to V^c$ \item a finite set $\mathcal{D_C}$ whose elements are $2g$-tuples of unions of blocks in $\bx$, and \item homogeneous polynomials $\chi_D \in \qyx$ for each $D \in \mathcal{D}_C$ \end{itemize} such that 
\begin{multline}\label{desiredform}[p]=[\sum_{B = V \times V^c \in \bx}\prod_{(i,j) \in B}(\ypij\ymij)^{2g}(\theta_B\cdot \alpha ((f_V)_*(p_V))+\phi_B \cdot \alpha((f_{V^c})_*(p_{V^c})))\\+\sum_{C \in \bx \cup \{\varnothing\}}\psi_C \sum_{D \in \mathcal{D}_C}\chi_D  \prod_{m=1}^{2g}\prod_{(i,j) \in D_m}\ypij\ymij \prod_{(i,j) \notin D_m \cup \bar{D_m}}y^{-\epsilon_C(i,j)}_{ij}].
\end{multline}
\end{prop}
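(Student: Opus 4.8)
The plan is to prove the proposition by induction on $n=\vert X\vert$, using Lemma \ref{monster} to convert an arbitrary monomial into two recognizable shapes and treating each shape separately. By $\qqq$-linearity it suffices to handle a single monomial $p$. Its degree $2gn^2+\tfrac12(n-1)(n-2)$ is at least $2gn(n-1)-n+1$, so Lemma \ref{monster} writes $[p]$ as a $\qyx$-combination of \emph{block-of-zeros} monomials $\prod_{(i,j)\in B'}(\ypij\ymij)^{2g}$ and \emph{single-pattern} monomials $\prod_{(i,j)\in B'}(\ymij)^{2g}\prod_{(i,j)\notin B'\cup\bar{B'},\,i<j}(\ypij)^{2g}$, where $B'$ runs over $\bx$ (and $B'=\varnothing$ for the second type). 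Since Lemma \ref{monster} is a homogeneous statement, each term here has degree $\deg p$.

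For the single-pattern terms, I would apply Lemma \ref{procedure} with $C=B'$ (its degree hypothesis $2gn^2+\tfrac12(n-1)(n-2)-2gn-n(n-1)g$ is clearly met), and then use Corollary \ref{ajpropflipblock} to move between $\qymx$ and $\qymbx$ and the block-interaction results Lemmas \ref{cd1sbextends}, \ref{ebfbextendinb}, \ref{symdiff} and Remark \ref{restrictblock} to recognize that overlapping partial sign patterns assemble into honest blocks. The upshot should be that each single-pattern term becomes a polynomial multiple of $\alpha(Q_n(X,C,D))$ for some $2g$-tuple $D$ of unions of blocks; since $Q_n(X,C,D)\in\mathcal{A}_n$ by Lemma \ref{withunions}, these terms are exactly the second sum in \eqref{desiredform}.

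For a block-of-zeros term $\psi_{B'}\prod_{(i,j)\in B'}(\ypij\ymij)^{2g}$ with $B'=V\times V^c$, the zero block carries degree $4g\vert V\vert\vert V^c\vert$, so $\deg\psi_{B'}\ge 2gn^2+\tfrac12(n-1)(n-2)-4g\vert V\vert\vert V^c\vert$. The relations \eqref{relns} let one eliminate every generator $y^{\pm}_{ij}$ with $i\in V,\ j\in V^c$ in favour of generators indexed within $V$ or within $V^c$, so $[\psi_{B'}]$ has a representative that is a sum of products $\psi^V\psi^{V^c}$ with $\psi^V\in\qqq[Y(V)]$, $\psi^{V^c}\in\qqq[Y(V^c)]$, each of degree $\deg\psi_{B'}$. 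Lemma \ref{bozsqind} (with $H=V$, $W=V^c$) then forces one factor, say $\psi^V$, to have degree at least $2g\vert V\vert^2+\tfrac12(\vert V\vert-1)(\vert V\vert-2)$. Applying the inductive hypothesis to $\psi^V$ (legitimate as $\vert V\vert<n$) rewrites $[\psi^V]$ as a combination of images of collections in $\mathcal{A}_{\vert V\vert}$; transporting these along a bijection $f_V:[\vert V\vert]\to V$ and appending the zero block $\bigcup_{i\in V,\,j\in V^c}\bigcup_{x\in\setgens}\spijx\smijx$, the recursive clause in the definition of $\mathcal{A}_n$ exhibits the result as a combination of images of collections in $\mathcal{A}_n$ — a contribution to the first sum of \eqref{desiredform}, with $\theta_{B'}$ absorbing $\psi^{V^c}$ (and symmetrically when it is $\psi^{V^c}$ that has the large degree). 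The base case $n=1$ is immediate: $R=\qqq[y^+_{11}]$, $\bx=\varnothing$, and a monomial of degree $\ge 2g$ is a multiple of $(y^+_{11})^{2g}=\alpha(Q_1([1],\varnothing,\varnothing))$; $n=2$ reduces to Proposition \ref{propfive} read through $\alpha$.

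I expect the main obstacle to be bookkeeping rather than any new idea: matching the factor-of-two and $i<j$ discrepancies between the raw outputs of Lemmas \ref{monster}, \ref{procedure} and the precise form \eqref{desiredform}, reading the expression $\theta_B\cdot\alpha((f_V)_*(p_V))$ in \eqref{desiredform} as shorthand for a finite sum of polynomial multiples of images of $\mathcal{A}_n$-collections supported on $V$ together with the block $B$ (the inductive hypothesis produces a sum, not a single good collection), and keeping the many degree inequalities aligned as they pass through the five pigeonhole lemmas and the block-interaction lemmas. Each individual step, however, is routine.
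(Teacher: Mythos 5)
Your overall structure — induction on $n$, invoke Lemma \ref{monster} to split $[p]$ into block-of-zeros terms and single-sign-pattern terms, then handle the two shapes separately — is exactly the paper's, and your treatment of the block-of-zeros shape is correct: expand $\psi_{B'}$ via the relations $[y^\pm_{ij}]=\tfrac12[y^+_{ii}\pm y^+_{jj}]$ into products $\psi^V\psi^{V^c}$, apply the pigeonhole Lemma \ref{bozsqind}, and feed the large factor to the inductive hypothesis together with the zero block $\bigcup_{i\in V,j\in V^c}\bigcup_x s^+_{ij}(x)s^-_{ij}(x)$ to land in $\mathcal{A}_n$. That part of the proposal matches the paper.

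The single-pattern case is where the proposal has a genuine gap. You assert that after Lemma \ref{procedure} "each single-pattern term becomes a polynomial multiple of $\alpha(Q_n(X,C,D))$," but Lemma \ref{procedure}'s output has \emph{two} kinds of pieces: besides the $\mathcal{D}$-indexed terms it produces a sum $\sum_{B\in\bx}\theta_B\prod_{(i,j)\in B}(y^{\epsilon_C(i,j)}_{ij})^{2g}$. When that full-block piece is multiplied by the single-pattern monomial $\prod_{i<j}(y^{-\epsilon_C(i,j)}_{ij})^{2g}$, it yields a \emph{block-of-zeros} term $\prod_{(i,j)\in B}(y^+_{ij}y^-_{ij})^{2g}$ times a remainder — i.e.\ a contribution to the \emph{first} sum of \eqref{desiredform}, not a multiple of $\alpha(Q_n(X,C,D))$. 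Without noting this and routing that fallout back into the block-of-zeros case (and checking the remainder's degree still meets Lemma \ref{bozsqind}'s hypothesis, which it does, but needs to be said), the argument for the single-pattern terms is incomplete. You also cite Lemmas \ref{cd1sbextends}, \ref{ebfbextendinb}, \ref{symdiff} and Remark \ref{restrictblock} as tools for this step; those are used inside the proof of Lemma \ref{monster}, not in the proof of the Proposition, and no "assembling overlapping sign patterns into blocks" happens at this stage. Finally, the paper's route to Lemma \ref{procedure} passes through Lemmas \ref{2goroct} and \ref{octandonediag}, extracting $\prod_i(y^+_{ii})^{2g}\theta + \phi r$ with $r$ of high degree in $\qqq[Y^-_C(X)]$ and handling the $\phi r$ piece via Corollary \ref{ajpropflipblock}; your shortcut of applying Lemma \ref{procedure} directly to $\chi_C$ appears to survive the degree count, but since you do not anticipate the full-block fallout described above you have not actually shown your shortcut closes, so the streamlined single-pattern argument as written does not establish the Proposition.
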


\begin{rmk} By the definition of the sets $\mathcal{A}_n$ (see Definition \ref{recursiveAsets}), if this relation holds, then there exist $p_i \in \mathcal{A}_n$, monomials $\theta_i \in \qyx$, and a bijection $f_X:[n] \to X$ such that $[p] = [ \sum_i \theta_i \cdot \alpha (f_X)_*(p_i)]$.
\end{rmk}

\begin{proof} By induction on $n$. 

For $n=1$, if $p \in \qqq[y^+_{11}]$ is a monomial of degree $\geq 2g$, it is in the desired form. (\checkmark)

Suppose $n \geq 2$. By Lemma \ref{monster}, we can write
\[
[p]= \left[ \sum_{B \in \bx} \psi_B \prod_{(i,j) \in B} (\ypij \ymij)^{2g} + \sum_{B \in \bx \cup \{\varnothing\}} \chi_B \prod_{(i,j) \in B}(\ymij)^{2g}\prod_{\substack{(i,j) \notin B \cup \bar{B} \\ i<j}}(\ypij)^{2g}\right].
\]

As usual, we treat each term separately. Fix a block $B =V \times V^c \in \bx$ and consider a monomial $\psi_B \prod_{(i,j) \in B } ( \ypij \ymij)^{2g}$. Suppose $B = \seh \times \sfw$; then $\deg \psi_B \geq 2gn^2 + \frac{1}{2}(n-1)(n-2) -4gwh$. By using the relations $[y^\pm_{e_if_j}]=\frac{1}{2}[y^+_{e_ie_i}\pm y^+_{f_jf_j}]$, we can express $[\psi_B]$ as a sum of terms of the form $[\psi_h \psi_w]$, where $\psi_h \in \qyeh$ and $\psi_w \in \qyfw$ are monomials of total degree $\geq 2gn^2+\frac{1}{2}(n-1)(n-2)-4gwh$. By Lemma \ref{bozsqind}, in each of these terms either $\deg \psi_h \geq 2gh^2+\frac{1}{2}(h-1)(h-2)$ or $\deg \psi_w \geq 2gw^2 + \frac{1}{2}(w-1)(w-2)$, and we can apply the inductive hypothesis to the appropriate monomial. Applying the inductive hypothesis to $\psi_h$, we find that there exist $q_i \in \mathcal{A}_n$, monomials $\zeta_i \in \qqq[Y(V)]$, and a bijection $f_V: [h] \to V$ such that $[\psi_h] = [\sum_i \zeta_i \cdot \alpha(f_V)_*(q_i)]$; applied to $\psi_w$ the inductive hypothesis gives $[\psi_w] = [\sum_j \xi_j \cdot \alpha(f_{V^c})_*(r_j)]$, where $r_j \in \mathcal{A}_n$, $\xi_j \in \qqq[Y(V^c)]$, and $f_{V^c}:[w] \to V^c$ is a bijection. Applying the inductive hypothesis to the appropriate factor in each term of $\psi_B$ and summing the resulting representatives, we obtain 
\[
\left[\psi_B \prod_{(i,j) \in B}(\ypij \ymij)^{2g}\right]=\left[\theta_B \cdot \alpha((f_V)_*(p_V))+ \phi_B \cdot \alpha((f_{V^c})_*(p_{V^c}))\right]
\] 
for homogeneous polynomials $\theta_B, \phi_B \in \qyx$ and elements $p_V \in \mathcal{A}_h$ and $p_{V^c} \in \mathcal{A}_w$. This is in the desired form \eqref{desiredform} (taking $\psi_C = 0$ $\forall C \in \bx \cup \{ \varnothing \}$).

Now fix $C \in \bx \cup \{\varnothing\}$ and consider the term $\chi_C \prod_{(i,j) \in C}(\ymij)^{2g}\prod_{\substack{(i,j) \notin C \cup \bar{C} \\ i<j}}(\ypij)^{2g}$. Observe that $\deg \chi_C \geq 2gn^2 + \frac{1}{2} (n-1)(n-2) - n(n-1)g$. By repeated application of Lemmas \ref{2goroct} and \ref{octandonediag}, we can write $[\chi_C] = \left[ \prod_{i\in X}(y^+_{ii})^{2g} \theta + \phi r \right]$, where $r \in \qqq[Y^-_C(X)]$ is a homogeneous polynomial with $\deg r \geq n(n-1)g-n+2$, and $\theta, \phi \in \qyx$. By Corollary \ref{ajpropflipblock}, $[r] = \left[\sum_{E \in \bx} \theta_E \prod_{(i,j) \in E} (y^{\epsilon_C(i,j)}_{ij})^{2g} \right]$, for homogeneous polynomials $\theta_E \in \qyx$.  Thus 
\[\left[ \phi r \prod_{(i,j) \in C}(\ymij)^{2g}\prod_{\substack{(i,j) \notin C \cup \bar{C} \\ i<j }}(\ypij)^{2g}\right] = \left[ \sum_{E \in \bx} \psi_E \prod_{(i,j) \in E} (\ypij \ymij)^{2g} \right]
\] (for some $\phi_E \in \qyx$), which is of the form considered above. 

Finally, consider the monomial $\prod_{i \in X}(y^+_{ii})^{2g} \theta \prod_{(i,j) \in C}(\ymij)^{2g}\prod_{(i,j) \notin C \cup \bar{C}}(\ypij)^{2g}$. Note that $\deg \theta \geq 2gn^2 + \frac{1}{2}(n-1)(n-2) - \frac{1}{2}n(n-1)g-2gn$. Applying Lemma \ref{procedure}, 
\[
[\theta] = \left[ \sum_{F \in \bx} \phi_F \prod_{(i,j) \in F} (y^{\epsilon_C(i,j)}_{ij})^{2g} + \sum_{D \in \mathcal{D}_C}\psi_D \prod_{m=1}^{2g}(\prod_{(i,j) \in D_m}y^{\epsilon_C(i,j)}_{ij})(\prod_{\substack{(i,j) \notin D_m \cup \bar{D_m} \\ i<j}}y^{-\epsilon_C(i,j)}_{ij}) \right].
\]
As usual, consider each monomial individually. Fix $F$, and consider 
\[
\prod_{i \in X}(y^+_{ii})^{2g}\prod_{(i,j) \in C}(\ymij)^{2g}\prod_{\substack{(i,j) \notin C \cup \bar{C}\\i<j}}(\ypij)^{2g}\phi_C \prod_{(i,j) \in F}(y^{\epsilon_C(i,j)}_{ij})^{2g}.
\] This contains the factor $\prod_{(i,j) \in F}(\ypij \ymij)^{2g}$, and so is of the form discussed above. The remaining term 
\[
\prod_{i \in X} (y^+_{ii})^{2g} \prod_{(i,j) \in C}(\ymij)^{2g}\prod_{\substack{(i,j) \notin C \cup \bar{C} \\ i<j}}(\ypij)^{2g} \left( \sum_{D \in \mathcal{D}_C}\psi_D \prod_{m=1}^{2g}\prod_{(i,j) \in D_m}y^{\epsilon_C(i,j)}_{ij}\prod_{\substack{(i,j) \notin D_m \cup \bar{D_m} \\ i<j}}y^{-\epsilon_C(i,j)}_{ij}\right)
\] is already in the form \eqref{desiredform}. 
\end{proof}
\subsection{Proof of the main theorem}
\begin{thm} For each $\phi \in \Phi(G)$, let $k_\phi$ be a nonnegative integer. Then the cohomology class 
$
\prod_{\phi \in \Phi(G)}(c_1(L_\phi))^{k_\phi} \in H^{2 \sum_{\phi \in \Phi(G)}k_\phi}(S_{n,g}(t); \qqq)
$
 vanishes whenever $\sum_{\phi \in \Phi(G)} k_\phi \geq 2gn^2 + \frac{1}{2}(n-1)(n-2)$.   
\end{thm}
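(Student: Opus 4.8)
The plan is to combine the geometric vanishing results of Section \ref{purple} with the combinatorial reduction of Section \ref{heart} by means of a single ring homomorphism $R\to H^*(S_{n,g}(t);\qqq)$. As a first step I would dispose of the duals: since $L_{-\phi}\cong L_\phi^*$ we have $c_1(L_{-\phi})=-c_1(L_\phi)$, so any monomial $\prod_{\phi\in\Phi(G)}c_1(L_\phi)^{k_\phi}$ equals, up to sign, a monomial $\prod_{1\le i,j\le n}(c^+_{ij})^{k^+_{ij}}(c^-_{ij})^{k^-_{ij}}$ in the classes $c^\pm_{ij}=c_1(\lpmij)$ of the same total degree $\sum_\phi k_\phi\ge 2gn^2+\frac12(n-1)(n-2)$, with $k^-_{ii}=0$. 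It therefore suffices to show that every such monomial vanishes.

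Next I would construct the homomorphism. Because $\phi\mapsto c_1(L_\phi)$ is additive ($L_{\phi+\psi}\cong L_\phi\otimes L_\psi$), the identities $\eta_i+\eta_j=\eta_j+\eta_i$, $\eta_i-\eta_j=-(\eta_j-\eta_i)$ and $(\eta_i+\eta_j)-(\eta_j+\eta_k)+(\eta_k-\eta_i)=0$ among the roots translate into exactly the relations \eqref{relns} for the $c^\pm_{ij}$. Hence, taking $X=[n]$, the assignment $y^\pm_{ij}\mapsto c^\pm_{ij}$ extends to a ring homomorphism $\qyx\to H^*(S_{n,g}(t);\qqq)$ which kills the ideal $I$ and so descends to a ring homomorphism $\bar\Psi\colon R\to H^*(S_{n,g}(t);\qqq)$. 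By construction, if $q=\bigcup(\spijx)^{k^+_{ij}(x)}(\smijx)^{k^-_{ij}(x)}$ is a collection of sections then $\bar\Psi(\alpha(q))=\prod_{1\le i,j\le n}c_1(\lpij)^{\sum_x k^+_{ij}(x)}c_1(\lmij)^{\sum_x k^-_{ij}(x)}$, and when $q\in\mathcal{A}_n$ this product vanishes by Corollary \ref{goodthingsvanish}.

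Now set $p=\prod_{1\le i,j\le n}(y^+_{ij})^{k^+_{ij}}(y^-_{ij})^{k^-_{ij}}\in\qyx$, a monomial of degree $\ge 2gn^2+\frac12(n-1)(n-2)$ with $\bar\Psi([p])$ equal to our target monomial. Proposition \ref{alltogethernow}, together with the Remark following it, gives $[p]=\bigl[\sum_i\theta_i\cdot\alpha\bigl((f_X)_*(p_i)\bigr)\bigr]$ in $R$, for some monomials $\theta_i\in\qyx$, a bijection $f_X\colon[n]\to X=[n]$, and elements $p_i\in\mathcal{A}_n$. Each $(f_X)_*(p_i)$ is obtained from an element of $\mathcal{A}_n$ by relabelling the index set $[n]$; since having no common zeros is relabelling-invariant, and this is the only property of elements of $\mathcal{A}_n$ used in the proof of Corollary \ref{goodthingsvanish}, the corresponding product of Chern classes $\bar\Psi\bigl(\alpha((f_X)_*(p_i))\bigr)$ again vanishes. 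Applying $\bar\Psi$ to the displayed identity and using multiplicativity, the target monomial equals $\sum_i\bar\Psi(\theta_i)\cdot\bar\Psi\bigl(\alpha((f_X)_*(p_i))\bigr)=0$, and tracing back through the first step, $\prod_{\phi\in\Phi(G)}c_1(L_\phi)^{k_\phi}=0$.

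The substantive work is entirely contained in Proposition \ref{alltogethernow} and Corollary \ref{goodthingsvanish}; this final argument is pure bookkeeping, and the only points that need a moment's care are: that $\bar\Psi$ is well defined (via additivity of $\phi\mapsto c_1(L_\phi)$); that $\bar\Psi\circ\alpha$ behaves as claimed even when a line bundle occurs with multiplicity in a collection of sections (it does, because the Euler class of a direct sum of line bundles is the product of their first Chern classes, which is why Corollary \ref{goodthingsvanish} records the exponent $\sum_x k^\pm_{ij}(x)$); and keeping straight that ``degree'' in $\qyx$ counts factors whereas cohomological degree in $H^*$ is twice that — the hypotheses of Proposition \ref{alltogethernow}, Corollary \ref{goodthingsvanish} and the theorem are all phrased in terms of the number of factors, so no conversion is needed. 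If there is any obstacle at all, it is only the risk of that last confusion.
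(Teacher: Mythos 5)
Your argument is correct and is essentially the same as the paper's: both build a ring homomorphism $R\to H^*(S_{n,g}(t);\qqq)$ sending $[y^\pm_{ij}]\mapsto c^\pm_{ij}$, invoke Proposition \ref{alltogethernow} (via the remark following it) to write the image monomial as a combination $\sum_i\theta_i\cdot\alpha((f_X)_*p_i)$ with $p_i\in\mathcal{A}_n$, and then apply Corollary \ref{goodthingsvanish} to conclude each summand vanishes. You are slightly more explicit than the paper about the elimination of $L_{-\phi}$ via $c_1(L_{-\phi})=-c_1(L_\phi)$, the well-definedness of the homomorphism, and the permutation-invariance that makes $(f_X)_*p_i$ still yield a collection with no common zeros, but these are all points the paper's proof treats implicitly rather than genuine differences in method.
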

\begin{proof} Let $X = [n]$ and consider the rings $\qyx$ and $R = \qyx / I$. Let $J \subset H^*(S_{n,g}(t); \qqq)$ be the subring generated by the $c_1(L_\phi)$ for $\phi \in \Phi(G)$. Since the relations \eqref{relns} hold in $J$, the map 
\begin{align*} \pi: R & \to J \\ [y^\pm_{ij}] & \mapsto c_1(\lpmij)
\end{align*} 
defines a ring homomorphism. Consider the element $\prod_{\phi \in \Phi(G)} c_1(L_\phi)^{k_\phi} \in J$. It has a representative $[\prod(y^\pm_{ij})^{d^\pm_{ij}}]$ in $R$. Suppose $\sum_\phi k_\phi \geq 2gn^2 + \frac{1}{2}(n-1)(n-2)$. Then by Proposition \ref{alltogethernow}, $[p]$ is equivalent in $R$ to an expression of the form \eqref{desiredform}. So $\pi(p)$ vanishes in $H^*(S_{n,g}(t);\qqq)$ by Corollary \ref{goodthingsvanish}.
\end{proof}

\bibliographystyle{plain}

\end{document}